	\definecolor{linkred}{rgb}{0.6,0,0}
	\definecolor{linkblue}{rgb}{0,0,0.6}
\theoremstyle{plain}
\newtheorem{theorem}{Theorem}
\newtheorem{lemma}[theorem]{Lemma}
\newtheorem{proposition}[theorem]{Proposition}
\newtheorem{corollary}[theorem]{Corollary}
\theoremstyle{definition}
\newtheorem{example}[theorem]{Example}
\newtheorem{definition}[theorem]{Definition}
\newcommand{\bc}{\mathbb{C}}
\newcommand{\bp}{\mathbb{P}}
\newcommand{\bz}{\mathbb{Z}}
\newcommand{\ca}{\mathcal{A}}
\newcommand{\cf}{\mathcal{F}}
\newcommand{\z}{\mathcal{Z}}
\newcommand{\modm}{\mathcal{M}}
\newcommand{\res}{\mathop{\mathrm{Res}}}
\begin{document}

\title{Orbifold Hurwitz numbers and Eynard--Orantin invariants}

\author{Norman Do}
\address{School of Mathematical Sciences, Monash University, Victoria 3800, Australia}
\email{\href{mailto:norm.do@monash.edu}{norm.do@monash.edu}}

\author{Oliver Leigh}
\address{Department of Mathematics and Statistics, The University of Melbourne, Victoria 3010, Australia}
\email{\href{mailto:oleigh@gmail.com}{oleigh@student.unimelb.edu.au}}

\author{Paul Norbury}
\address{Department of Mathematics and Statistics, The University of Melbourne, Victoria 3010, Australia}
\email{\href{mailto:pnorbury@ms.unimelb.edu.au}{pnorbury@ms.unimelb.edu.au}}

\thanks{This work was partially supported by the Australian Research Council grant DP1094328.}
\subjclass[2010]{32G15; 14N35; 05A15}

\begin{abstract}
We prove that a generalisation of simple Hurwitz numbers due to Johnson, Pandharipande and Tseng satisfy the topological recursion of Eynard and Orantin.  This generalises the Bouchard--Mari\~no conjecture and places Hurwitz--Hodge integrals, which arise in the Gromov--Witten theory of target curves with orbifold structure, in the context of the Eynard--Orantin topological recursion.
\end{abstract}

\maketitle

\setlength{\parskip}{0pt}
\tableofcontents
\setlength{\parskip}{6pt}

\section{Introduction} \label{introduction}

The topological recursion of Eynard and Orantin produces invariants of a Riemann surface $C$ equipped with two meromorphic functions $x: C \to \mathbb{C}$ and $y: C \to \mathbb{C}$~\cite{EOrInv}. For integers $g \geq 0$ and $n > 0$, the Eynard--Orantin invariant $\omega_{g,n}$ is a multidifferential --- in other words, a tensor product of meromorphic 1-forms --- on the Cartesian product $C^n$.  See Section~\ref{sec:EO} for a precise definition of the topological recursion and further details.

For various choices of spectral curve $(C, x, y)$, the Eynard--Orantin invariants store intersection numbers on moduli spaces of curves~\cite{EynRec,EOrInv}; Weil--Petersson volumes~\cite{EOrWei}; simple Hurwitz numbers~\cite{BEMSMat,BMaHur,EMSLap}; the enumeration of lattice points in moduli spaces of curves~\cite{NorStr}; Gromov--Witten invariants of $\bp^1$~\cite{DOSSIde, NScGro}; and conjecturally, the open and closed Gromov--Witten invariants of toric Calabi--Yau threefolds~\cite{BKMPRem,MarOpe,EOrCom}. The main result of this paper adds to this list an infinite family of examples, which generalise the relation between simple Hurwitz numbers and Eynard--Orantin invariants known as the Bouchard--Mari\~no conjecture~\cite{BMaHur}.  The methods herein generalise the proof of this result by Eynard, Mulase, and Safnuk~\cite{EMSLap}.

A great deal of attention in the literature has been paid to simple Hurwitz numbers and their relation to various moduli spaces~\cite{BMaHur, ELSV2,EMSLap, HurRie, OPaGrom}. The simple Hurwitz number $H_{g;\mu}$ is the weighted count of connected genus $g$ branched covers of $\mathbb{P}^1$ with ramification profile over $\infty$ given by the partition $\mu = (\mu_1, \mu_2, \ldots, \mu_n)$ and simple ramification elsewhere.  In this paper, we consider the generalisation $H^{[a]}_{g;\mu}$, which is the weighted count of connected genus $g$ branched covers of $\mathbb{P}^1$ with ramification profile over $\infty$ given by $\mu$, ramification profile over 0 given by a partition of the form $(a, a, \ldots, a)$, and simple ramification elsewhere.  We refer to these as {\em orbifold Hurwitz numbers} and note that we recover the simple Hurwitz numbers in the case $a = 1$. See Section~\ref{sec:hur} for a precise definition of orbifold Hurwitz numbers.

Assemble the orbifold Hurwitz numbers into the following generating function.
\begin{equation} \label{orb_gen_func}
H^{[a]}_{g,n}(x_1, \ldots, x_n) = \sum_{\mu_1, \ldots, \mu_n = 1}^\infty H^{[a]}_{g;\mu}~ \frac{|\mathrm{Aut}~\mu|}{(2g-2+n+\frac{|\mu|}{a})!} ~x_1^{\mu_1} \cdots x_n^{\mu_n}
\end{equation}
We use the notation $|\mu|$ to denote the sum $\mu_1 + \mu_2 + \cdots + \mu_n$ and $\mathrm{Aut}~\mu$ to denote the group of permutations that leave the tuple $(\mu_1, \mu_2, \ldots, \mu_n)$ invariant.

\begin{theorem} \label{th:main}
For any positive integer $a$, consider the rational spectral curve $C$ given by
\[
x(z) = z\exp(-z^a) \qquad \text{and} \qquad y(z) = z^a.
\]
The analytic expansion of the Eynard--Orantin invariant $\omega_{g,n}$ of $C$ around $x_1 = x_2 = \cdots = x_n = 0$ is given by
\begin{equation} \label{orb_gen_form}
\Omega_{g,n}^{[a]}(x_1,\ldots,x_n) = \frac{\partial}{\partial x_1} \cdots \frac{\partial}{\partial x_n} H^{[a]}_{g,n}(x_1, \ldots, x_n)~dx_1 \otimes \cdots \otimes dx_n.
\end{equation}
\end{theorem}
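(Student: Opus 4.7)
The proof follows the strategy used by Eynard, Mulase and Safnuk for the Bouchard--Mari\~no conjecture, adapted to the orbifold setting. The starting point is a \emph{cut-and-join equation} for the numbers $H^{[a]}_{g;\mu}$, obtained by removing a single simple transposition from a Hurwitz factorisation in the symmetric group. The resulting identity expresses $(2g-2+n+|\mu|/a)\,H^{[a]}_{g;\mu}$ as a sum of \emph{split} terms, in which one part $\mu_i$ is broken into two smaller parts (possibly disconnecting the cover and producing a stable pair of smaller Hurwitz numbers), plus \emph{join} terms, in which two parts $\mu_i,\mu_j$ are merged. This can be derived either directly by character-theoretic means or extracted from the Johnson--Pandharipande--Tseng framework.

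Next I would package the cut-and-join recursion as a differential identity for the generating series $H^{[a]}_{g,n}(x_1,\ldots,x_n)$ defined in \eqref{orb_gen_func}. The combinatorial factor $(2g-2+n+|\mu|/a)!$ in the denominator is engineered precisely so that the prefactor on the left of the cut-and-join is absorbed and the $\frac{\partial}{\partial x_i}$ operators convert weights $\mu_i x_i^{\mu_i-1}$ into the desired forms. Pulling back through $x=ze^{-z^a}$ to the rational coordinate $z$ on $C$ converts this into a recursion among meromorphic one-forms on the spectral curve. This should then be compared, stratum by stratum, with the Eynard--Orantin topological recursion for $C$, whose kernel is supported at the $a$ branch points of $x$, namely the points $\alpha_k$ with $\alpha_k^a=1/a$, together with their local deck involutions $\sigma_k$ exchanging sheets of $x$. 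The base cases $(g,n)=(0,1)$ (where $y\,dx = z^a(1-az^a)e^{-z^a}\,dz$) and $(0,2)$ (where $\omega_{0,2}$ is the standard Bergman kernel $dz_1\,dz_2/(z_1-z_2)^2$ on $\bp^1$) are then verified by direct expansion, after which an induction on $2g-2+n$ reduces the theorem to matching residues.

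The principal obstacle is the residue computation at the $a$ branch points. In the $a=1$ case there is a single branch point and the analysis in \cite{EMSLap} identifies the split/join contributions at this point directly with the Laplace-transformed cut-and-join. For general $a$, the $a$ branch points are permuted by the cyclic symmetry $z\mapsto \zeta z$ with $\zeta^a=1$, which acts on $x$ by multiplication by $\zeta$; only monomials with $|\mu|$ divisible by $a$ survive the symmetrised sum over roots of unity, mirroring the vanishing of $H^{[a]}_{g;\mu}$ unless $a\mid|\mu|$. The technical heart of the proof is thus to show that the sum of residues over the $\alpha_k$, once expanded as a Taylor series in $z_i$ near zero, reproduces the split and join structure of the orbifold cut-and-join with matching coefficients. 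The deck involutions $\sigma_k$ have no simple closed form in $z$ when $a>1$, so their Puiseux expansions around each $\alpha_k$ have to be handled carefully, and the interaction between the sum over $k$ and the combinatorial identity must be controlled through the $\ZZ/a\ZZ$ action. This matching of residues is where essentially all of the work lies; once it is established, the induction closes immediately.
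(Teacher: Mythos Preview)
Your outline has the right large-scale architecture --- cut-and-join recursion, convert to a differential identity in the $x_i$, pull back to $z$, then match against Eynard--Orantin by induction on $2g-2+n$ --- and this is indeed the strategy of the paper, following \cite{EMSLap}. But there is a structural ingredient missing that the paper treats as essential, and without it your residue-matching step does not close.

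The gap is this: nowhere do you establish that the pullback $F^{[a]}_{g,n}(z_1,\ldots,z_n) := H^{[a]}_{g,n}(x(z_1),\ldots,x(z_n))$ is a \emph{rational} function of the $z_i$, with poles only at the branch points $\alpha$ of $x$, and with the antisymmetry property that $F^{[a]}_{g,n}(\ldots,z_i,\ldots)+F^{[a]}_{g,n}(\ldots,\sigma_\alpha(z_i),\ldots)$ is analytic at $z_i=\alpha$. The paper obtains this (Corollary~\ref{th:fagnsym}) from the Johnson--Pandharipande--Tseng ELSV-type formula of Theorem~\ref{th:JPT}: that formula expresses $H^{[a]}_{g,n}$ as a finite linear combination of products of the functions $\xi^{(r)}_k(z)$, each of which lies in the space $\mathcal{A}_x$ (Lemma~\ref{th:xinax}). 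You mention JPT only as a possible source for cut-and-join; the paper uses it for something much stronger, and in the introduction explicitly says the argument is ``reliant upon'' it. Without rationality you cannot speak of principal parts, and without the involution-antisymmetry you cannot kill the extraneous terms in the cut-and-join PDE (the constant $2g-2+n$, and all summands over $i\neq 1$) to force the recursion into the asymmetric shape of \eqref{eq:EOrec}.

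Your proposed substitute --- exploit the cyclic $\ZZ/a\ZZ$ symmetry $z\mapsto\zeta z$, expand the deck involutions as Puiseux series, and match Taylor coefficients after symmetrising over roots of unity --- is not the mechanism the paper uses and is not obviously adequate. The paper works one branch point $\alpha$ at a time via principal parts (Proposition~\ref{th:EOprinc}), taking the \emph{symmetric} part of the cut-and-join PDE under the local involution $z_1\mapsto\hat z_1$ at that $\alpha$; the $\ZZ/a\ZZ$ action plays no explicit role. Your divisibility observation (that only $|\mu|\equiv 0\pmod a$ survives the root-of-unity average) is true but orthogonal to what is needed: the real issue is getting from a recursion symmetric in all $n$ variables to one that singles out $z_1$, and that reduction is driven by the $\mathcal{A}_x$-structure coming from JPT, not by the global cyclic symmetry.
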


Here and throughout the paper, we consider $z_k$ to be the rational parameter on the $k$th copy of $C$ in the Cartesian product $C^n$ and we adopt the shorthand $x_k$ for $x(z_k)$. For notational convenience, we usually omit the $\otimes$ symbol in the tensor product of 1-forms.

Our results are motivated by and reliant upon the work of Johnson, Pandharipande and Tseng concerning Hurwitz--Hodge integrals~\cite{JPT}. They describe spaces of admissible covers and interpret them as moduli spaces of stable maps. This allows for virtual localisation to be used, which leads to the following relation between orbifold Hurwitz numbers and intersection numbers on moduli spaces of stable maps to the classifying stack ${\mathcal B} \mathbb{Z}_a$ given by a point with trivial $\mathbb{Z}_a$ action.
\[
H^{[a]}_{g,\mu} =  \frac{(2g-2+n+\frac{|\mu|}{a})!}{|\mathrm{Aut}~\mu|}\, a^{1-g+\sum \{ \mu_i / a \}} \prod_{i=1}^n \frac{\mu_i^{\lfloor \mu_i / a\rfloor}}{\lfloor \mu_i/a \rfloor!}\int_{\overline{\mathcal{M}}_{g,[-\mu]}(\mathcal{B}\mathbb{Z}_a)} \frac{\sum_{i=0}^\infty (-a)^i \lambda_i^U }{\prod_{i=1}^n (1- \mu_i \bar{\psi_i})}
\]
where $r=\lfloor r\rfloor+\{r\}$ gives the integer and fractional parts of the real number $r$.
The case $a = 1$ is the famous ELSV formula, which expresses simple Hurwitz numbers as intersection numbers on moduli spaces of curves~\cite{ELSV2}. See Section~\ref{sec:ELSV} for a precise definition of the moduli spaces and characteristic classes appearing in the formula above.

One consequence of this ELSV-type formula is that the so-called Hurwitz--Hodge integrals appearing on the right hand side can be calculated from the knowledge of the orbifold Hurwitz numbers. So our main theorem places Hurwitz--Hodge integrals, which arise in the Gromov--Witten theory of target curves with orbifold structure, in the context of Eynard--Orantin topological recursion.

Sections~\ref{sec:hur}, \ref{sec:ELSV} and \ref{sec:EO} contain preparatory material.  The heart of the proof of Theorem~\ref{th:main} is contained in Section~\ref{sec:proof}.
In Section~\ref{sec:apps}, we conclude the paper with some applications of our main theorem. In particular, the general theory of Eynard--Orantin topological recursion involves string and dilaton equations. In the context of orbifold Hurwitz numbers, these lead to relations between Hurwitz--Hodge integrals on $\overline{\mathcal M}_{g,n+1}(\mathcal{B}\mathbb{Z}_a)$ and those on $\overline{\mathcal M}_{g,n}(\mathcal{B}\mathbb{Z}_a)$.





\section{Hurwitz numbers}  \label{sec:hur}

Hurwitz numbers count branched covers $\Sigma \to \Sigma'$ of Riemann surfaces with specified branch points and ramification profiles.  Variants may require further conditions to be satisfied --- for example, that the branched covers be connected or that the preimages of a branch point be labelled. Two branched covers $\Sigma_1$ and $\Sigma_2$ are isomorphic if there exists an isomorphism of Riemann surfaces $f:\Sigma_1\to\Sigma_2$ that covers the identity on $\Sigma'$. Similarly, an automorphism of a branched cover $\Sigma \to \Sigma'$ is an automorphism of the Riemann surface $\Sigma$ that covers the identity on $\Sigma'$. For a degree $d$ branched cover, the ramification profile at a branch point is given by a partition $\mu = (\mu_1, \mu_2, \ldots, \mu_\ell)$ consisting of non-increasing positive integers that sum to $d$. Suppose that we fix $\{p_1, p_2, \ldots, p_r\} \subset \Sigma'$ together with partitions $\mu^{(1)}, \mu^{(2)}, \ldots, \mu^{(r)}$ of $d$.  The associated {\em Hurwitz number} is the weighted count of branched covers $\pi: \Sigma \to \Sigma'$ with ramification profile $\mu^{(i)}$ at $p_i$, where the weight of a branched cover $\pi$ is $\frac{1}{|\mathrm{Aut}~\pi|}$. There are two distinct flavours of Hurwitz theory corresponding to the enumerations of connected covers and disconnected covers. Although our primary goal is to understand connected Hurwitz numbers, it is often necessary to deal with disconnected Hurwitz numbers.

The Riemann existence theorem allows a branched cover of $\mathbb{P}^1$ to be described by its monodromy at the branch points. It follows that a disconnected Hurwitz number is equal to $\frac{1}{|\mu|!}$ multiplied by the number of tuples $(\sigma_1, \sigma_2, \ldots, \sigma_r)$ of permutations in the symmetric group $S_{|\mu|}$ such that
\begin{itemize}
\item $\sigma_1 \sigma_2 \cdots \sigma_r = (1)$; and
\item $\sigma_i$ has cycle type given by the partition $\mu^{(i)}$.
\end{itemize}
One obtains connected Hurwitz numbers by further requiring that the permutations $\sigma_1, \sigma_2, \ldots, \sigma_r$ generate a transitive subgroup of $S_{|\mu|}$.

\begin{definition}
For a positive integer $a$, let the {\em orbifold Hurwitz number} $H_{g;\mu}^{[a]}$ be the weighted count of connected genus $g$ branched covers of $\mathbb{P}^1$ such that
\begin{itemize}
\item the ramification profile over $\infty$ is given by the partition $\mu = (\mu_1, \mu_2, \ldots, \mu_n)$;
\item the ramification profile over 0 is given by a partition of the form $(a, a, \ldots, a)$; and
\item the only other ramification is simple and occurs at $m$ fixed points.
\end{itemize}
\end{definition}
The Hurwitz number is zero unless $a$ divides $|\mu|$ and the Riemann--Hurwitz formula implies that $m$ must be equal to $2g - 2 + n + \frac{|\mu|}{a}$. We will consistently use $m$ to denote the expression $2g - 2 + n + \frac{|\mu|}{a}$ throughout the paper. We also consider $a$ to be fixed and often drop the superscript $[a]$.

\subsection{Cut-and-join recursion}

The cut-and-join recursion provides a simple method for the calculation of Hurwitz numbers. It was originally conceived for the case of simple Hurwitz numbers~\cite{GouldenJackson} and has since been generalised in various ways~\cite{SSZDouble, Zhang-Zhou}. The basic premise is to determine the behaviour of a branched cover as two of the branch points come together. For simple ramification, this translates into understanding the behaviour of permutations multiplied by transpositions. We state the cut-and-join recursion using the following normalisation of the simple Hurwitz numbers, where $\mu = (\mu_1, \mu_2, \ldots, \mu_n)$ and $m = 2g-2+n+|\mu|$.
\[
H_g(\mu) = H_{g;\mu} \times \frac{|\mathrm{Aut}~\mu|}{m!} \qquad \qquad \qquad
\]

\begin{proposition} [Cut-and-join recursion for simple Hurwitz numbers~\cite{GJVNum}] \label{th:simrec}
The normalised simple Hurwitz numbers satisfy the following recursion, where $m=2g-2+n+|\mu|$. We use the notation $S = \{1, 2, \ldots, n\}$ and $\mu_I = (\mu_{i_1}, \mu_{i_2}, \ldots, \mu_{i_N})$ for $I = \{i_1, i_2, \ldots, i_N\}$.
\begin{align} \label{eq:simrec}
m H_g(\mu_S) &= \sum_{i < j} (\mu_i + \mu_j) H_g(\mu_{S \setminus \{i,j\}}, \mu_i+\mu_j) \\
&+ \sum_{i=1}^n \sum_{\alpha+\beta=\mu_i} \frac{\alpha \beta}{2} \Bigg[ H_{g-1}(\mu_{S \setminus \{i\}},\alpha,\beta) + \mathop{\sum_{g_1+g_2=g}}_{I \sqcup J = S \setminus \{i\}} H_{g_1}(\mu_I, \alpha) H_{g_2}(\mu_J, \beta) \Bigg] \nonumber
\end{align}
\end{proposition}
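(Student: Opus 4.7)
The plan is to translate the enumeration of branched covers into an enumeration of tuples of permutations via the Riemann existence theorem, then to perform a case analysis on what happens when one multiplies the monodromy at infinity by a transposition. After normalisation, $H_g(\mu)$ is (up to a standard factorial factor) the number of transitive tuples $(\sigma_\infty, \tau_1, \ldots, \tau_m)$ in $S_{|\mu|}$ with $\sigma_\infty$ of cycle type $\mu$, each $\tau_i$ a transposition, and $\sigma_\infty \tau_1 \cdots \tau_m = e$. The factor $m$ on the left-hand side of \eqref{eq:simrec} corresponds to singling out one of the $m$ simple branch points, which by symmetry of the $\tau_i$ we may take to be $\tau_m$.

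Setting $\sigma' := \tau_m \sigma_\infty$, the relation $\sigma' \tau_1 \cdots \tau_{m-1} = e$ holds, so the reduced tuple describes a (possibly disconnected) branched cover of $\mathbb{P}^1$ whose ramification over $\infty$ has cycle type given by that of $\sigma'$. Writing $\tau_m = (x\, y)$, the analysis splits into two cases. \emph{Join case:} if $x$ and $y$ lie in different cycles of $\sigma_\infty$ of lengths $\mu_i$ and $\mu_j$, then $\sigma'$ merges them into a single cycle of length $\mu_i + \mu_j$. Riemann--Hurwitz preserves the genus at $g$, and transitivity of the reduced tuple is inherited from the original via the merged cycle of $\sigma'$. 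This produces the first sum on the right-hand side of \eqref{eq:simrec}. \emph{Cut case:} if $x$ and $y$ lie in the same cycle of $\sigma_\infty$ of length $\mu_i$, then $\sigma'$ splits this cycle into ones of lengths $\alpha$ and $\beta$ with $\alpha + \beta = \mu_i$; the reduced tuple either remains transitive (giving a connected cover of genus $g-1$, as dictated by Riemann--Hurwitz) or splits into two transitive subtuples with genera summing to $g$ and the remaining parts of $\mu$ partitioned between them. These two subcases give the two terms in the second sum on the right-hand side.

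The main obstacle is the combinatorial bookkeeping that converts raw counts of transpositions into the stated coefficients. Directly counting transpositions of the required type yields $\mu_i \mu_j$ in the join case and $\mu_i$ in the cut case, rather than $\mu_i + \mu_j$ and $\tfrac{1}{2}\alpha\beta$; these discrepancies are absorbed by tracking how the normalising factors $|\mathrm{Aut}\,\mu|$, $m!$ and $|\mu|!$ change when parts of $\mu$ are merged or split, together with the factor of $\tfrac{1}{2}$ accounting for the symmetry $\alpha \leftrightarrow \beta$ and the factor accounting for the choice of ordered pair $(I, J)$ in the disconnected subcase. Once these bookkeeping adjustments are in place, the summands of \eqref{eq:simrec} correspond bijectively to the join, connected cut, and disconnected cut subcases, completing the proof.
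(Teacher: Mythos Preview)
Your approach is correct and is the classical algebraic argument for the cut-and-join recursion: encode covers by transitive factorisations in $S_{|\mu|}$, single out the last transposition $\tau_m$, and analyse whether $\tau_m$ merges two cycles of $\sigma_\infty$ or splits one. This is essentially the argument in the cited reference~\cite{GJVNum}, and the paper itself does not give a separate proof of this proposition; it simply states it with citation.

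What the paper does prove, in Appendix~\ref{sec:graphrep}, is the orbifold generalisation (Proposition~\ref{th:rec}), and it does so by a genuinely different method: it sets up a bijection between branched covers and certain edge-labelled fatgraphs, then obtains the recursion by deleting the edge with the largest label and classifying the result (two faces merge; one face splits but the fatgraph stays connected; the fatgraph disconnects). Specialised to $a=1$, this yields the simple case. The fatgraph argument has the advantage that the coefficients $\mu_i+\mu_j$ and $\tfrac{1}{2}\alpha\beta$ appear directly as counts of ways to reinsert the deleted edge, so the ``bookkeeping adjustments'' you defer --- reconciling the raw transposition counts $\mu_i\mu_j$ and $\mu_i$ with the stated coefficients via the changing $|\mathrm{Aut}\,\mu|$ and $m!$ factors --- are entirely bypassed. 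Your permutation argument is more standard and arguably more elementary for the simple case; the fatgraph approach is what the paper needs because it extends cleanly to the orbifold setting.
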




We can state the cut-and-join recursion for orbifold Hurwitz numbers using the analogous normalisation, where $\mu = (\mu_1, \mu_2, \ldots, \mu_n)$ and $m = 2g-2+n+\frac{|\mu|}{a}$.
\[
H_g^{[a]}(\mu) = H_{g;\mu}^{[a]} \times \frac{|\text{Aut } \mu|}{m!}
\]

\begin{proposition}[Cut-and-join recursion for orbifold Hurwitz numbers]  \label{th:rec}
For $\mu=(\mu_1, \mu_2, \ldots, \mu_n)$, the normalised orbifold Hurwitz numbers satisfy the cut-and-join recursion~\eqref{eq:simrec}, where $m=2g-2+n+\frac{|\mu|}{a}$.
\end{proposition}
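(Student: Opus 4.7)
The plan is to adapt the standard derivation of the simple-case recursion from~\cite{GJVNum}, exploiting the key observation that the cut-and-join move interacts only with $\sigma_\infty$ and one of the simple transpositions, so that the additional branch point of profile $(a, \ldots, a)$ over $0$ is a passive spectator throughout.

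First, by the Riemann existence theorem I would rewrite the disconnected variant of $H_{g;\mu}^{[a]}$ as $\tfrac{1}{|\mu|!}$ times the number of tuples $(\sigma_\infty, \sigma_0, \tau_1, \ldots, \tau_m) \in S_{|\mu|}^{m+2}$ satisfying $\sigma_\infty \sigma_0 \tau_1 \cdots \tau_m = 1$, where $\sigma_\infty$ has cycle type $\mu$, $\sigma_0$ has cycle type $(a, a, \ldots, a)$, and each $\tau_i$ is a transposition; Riemann--Hurwitz forces $m = 2g-2+n+|\mu|/a$. To produce the factor $m$ on the left-hand side I would distinguish one of the $m$ simple transpositions, and using the cyclic invariance of the product relation, assume it is $\tau_m$. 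Setting $\sigma_\infty' := \tau_m \sigma_\infty$ then yields a shorter tuple $(\sigma_\infty', \sigma_0, \tau_1, \ldots, \tau_{m-1})$ whose product is the identity.

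Next I would apply the standard permutation-combinatorics dichotomy. If the support of $\tau_m$ meets two distinct cycles of $\sigma_\infty$ of lengths $\mu_i$ and $\mu_j$, then $\sigma_\infty'$ has those parts merged into a single cycle of length $\mu_i + \mu_j$, and conversely there are exactly $\mu_i + \mu_j$ transpositions recovering $\sigma_\infty$ from such a $\sigma_\infty'$. If instead the support lies in a single cycle of length $\mu_i$, then $\sigma_\infty'$ has that cycle split into two of lengths $\alpha$ and $\beta$ with $\alpha + \beta = \mu_i$, and inversely there are $\alpha\beta$ joining transpositions. Passing to the connected count via the standard inclusion--exclusion, the cut case further splits into the non-separating subcase (the source stays connected, giving the $H_{g-1}$ term) and the separating subcase (the source splits, giving the product sum over $g_1+g_2=g$ and $I \sqcup J = S \setminus \{i\}$). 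Summing over ordered $(\alpha, \beta)$ and passing to the normalisation $H_g^{[a]}(\mu) = H_{g;\mu}^{[a]} |\mathrm{Aut}~\mu|/m!$ converts the raw transposition counts into the stated coefficients $\mu_i + \mu_j$ and $\alpha\beta/2$ of~\eqref{eq:simrec}.

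The one place where the orbifold structure actually enters is the separating subcase, where the cycles of $\sigma_0$ must distribute consistently between the two components of the source curve; this is the main subtlety of the bookkeeping. It is automatic: each $a$-cycle of $\sigma_0$ lies wholly in one of the two orbits of the resulting intransitive subgroup, which forces $a \mid |\mu_I|+\alpha$ and $a \mid |\mu_J|+\beta$, and the genera $g_1, g_2$ are then pinned down by Riemann--Hurwitz on each component. Since $\sigma_0$ is untouched by the cut-and-join move, the recursion takes the same form as in Proposition~\ref{th:simrec}, with only the Riemann--Hurwitz value of $m$ updated.
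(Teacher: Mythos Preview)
Your monodromy-based approach is valid and is essentially the classical Goulden--Jackson argument; the paper takes a different route in Appendix~\ref{sec:graphrep}, using a fatgraph model. There a cover is encoded by the preimage of a star with $m$ rays centred at $0$, producing a fatgraph with $|\mu|/a$ vertices each carrying $am$ cyclically labelled half-edges and exactly one full edge of each label $1,\ldots,m$. The recursion comes from deleting the edge labelled $m$: if it borders two distinct faces one gets the join term, if it borders a single face and its removal leaves the fatgraph connected one gets the genus-drop term, and if removal disconnects one gets the product term. The fatgraph picture makes the connectivity and genus bookkeeping purely topological and handles the connected count directly, whereas in your approach you must track transitivity of the subgroup generated by the shortened tuple; conversely, your argument makes it transparent that $\sigma_0$ is a spectator, which in the fatgraph picture is buried in the valence condition at the vertices. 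The paper itself notes the permutation route as an alternative in Section~\ref{sec:hur}, citing~\cite{SSZDouble}.

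One small wobble: your account of the factor $m$ is off. You cannot distinguish an arbitrary $\tau_k$ and use cyclic invariance to move it to position $m$ without dragging $\sigma_0$ out of place; the only transposition you can absorb into $\sigma_\infty$ while leaving $(\sigma_\infty',\sigma_0,\tau_1,\ldots,\tau_{m-1})$ a legitimate shorter tuple of the same shape is $\tau_m$ itself. The factor $m$ instead appears from the normalisation: $mH_g(\mu)=K_g(\mu)/(m-1)!$, and every term on the right has $m-1$ simple branch points, hence is also normalised by $(m-1)!$. This is a narrative slip and does not affect the correctness of your argument.
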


In Appendix~\ref{sec:graphrep}, we provide a proof of this proposition via a graphical representation of branched covers. An immediate use of Proposition~\ref{th:rec} is the calculation of the generating functions appearing in Equations~\ref{orb_gen_func} and~\ref{orb_gen_form} in the base cases $(g,n) = (0,1)$ and $(g,n) = (0,2)$.

\begin{lemma} \label{th:01}
The generating function $H^{[a]}_{0,1}(x)$ satisfies the following equations.
\[
x\frac{d}{d x}H^{[a]}_{0,1} = z^a \qquad \qquad \Omega^{[a]}_{0,1} = z^{a-1}(1-az^a)~dz.
\]
\end{lemma}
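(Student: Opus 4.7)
The plan is to reduce the statement to a first-order ODE for $f(x) := x\frac{d}{dx}H^{[a]}_{0,1}(x)$ using the cut-and-join recursion of Proposition~\ref{th:rec}, solve it in closed form, and identify $f$ with $z^a$ via the spectral-curve equation $x = ze^{-z^a}$; the second identity is then a line of logarithmic differentiation.

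First I would specialise the cut-and-join recursion to $(g,n)=(0,1)$ with $\mu=(\mu_1)$. The pair sum is empty, the $H_{g-1}$ contribution vanishes, and only the split with $g_1=g_2=0$ and $I=J=\emptyset$ survives. Since the Hurwitz number vanishes unless $a\mid\mu_1$, writing $\mu_1=ak$ this yields
\[
(k-1)\,H^{[a]}_0(ak) \;=\; \sum_{\substack{i+j=k\\ i,j\geq 1}}\frac{a^2 ij}{2}\,H^{[a]}_0(ai)\,H^{[a]}_0(aj),
\]
with base case $H^{[a]}_0(a)=1/a$ from the unique cover $z\mapsto z^a$ with automorphism group $\mathbb{Z}_a$. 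Multiplying by $x^{ak}$ and summing over $k\geq 1$, the left-hand side becomes $f(x)/a - H^{[a]}_{0,1}(x)$ while the right-hand side packages as $f(x)^2/2$, giving the generating-function identity $f - aH^{[a]}_{0,1} = \tfrac{a}{2} f^2$. Differentiating once and substituting $H^{[a]\prime}_{0,1} = f/x$ produces the separable ODE
\[
x f'(x)\bigl(1-af(x)\bigr) \;=\; a f(x),\qquad f(0)=0,\ f(x)=x^a+O(x^{a+1}).
\]

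Separating variables gives $f\,e^{-af}=C\,x^a$, and matching leading coefficients forces $C=1$. On the other hand, raising the spectral-curve equation $x=ze^{-z^a}$ to the $a$-th power shows that $f=z^a$ likewise satisfies $f\,e^{-af}=x^a$ with $f = x^a + O(x^{a+1})$, so by uniqueness of the power-series branch we conclude $x\frac{d}{dx}H^{[a]}_{0,1}=z^a$. For the second formula, the logarithmic derivative of $x = ze^{-z^a}$ gives $dx/x = (1-az^a)\,dz/z$, whence
\[
\Omega^{[a]}_{0,1} \;=\; \frac{d H^{[a]}_{0,1}}{dx}\,dx \;=\; \frac{f(x)}{x}\,dx \;=\; z^a\cdot\frac{dx}{x} \;=\; z^{a-1}(1-az^a)\,dz.
\]

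The only delicate step is isolating the correct branch of $f\,e^{-af}=x^a$, which is pinned down by the leading term $f=x^a+O(x^{a+1})$ extracted from the definition of $H^{[a]}_{0,1}$; everything else is a routine application of the cut-and-join recursion and elementary manipulation of the spectral curve.
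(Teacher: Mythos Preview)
Your proof is correct and follows essentially the same approach as the paper: both apply the cut-and-join recursion at $(g,n)=(0,1)$ to obtain the same first-order ODE $\tfrac{1}{a}f - H^{[a]}_{0,1} = \tfrac{1}{2}f^2$ for $f = x\tfrac{d}{dx}H^{[a]}_{0,1}$, and then identify the solution with the spectral curve. The only tactical difference is that the paper verifies the explicit guess $H^{[a]}_{0,1} = \tfrac{z^a}{a} - \tfrac{z^{2a}}{2}$ after changing variables to $z$, whereas you solve the ODE directly to the implicit form $f e^{-af} = x^a$ and then match it against $x^a = z^a e^{-az^a}$; your route is slightly more self-contained in that it does not require guessing the closed form, but the underlying argument is the same.
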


\begin{proof}
In the case $(g,n) = (0,1)$, Proposition~\ref{th:rec} states that
\[
\left(\frac{\mu}{a} - 1\right) H_0(\mu) = \sum_{\alpha + \beta = \mu} \frac{\alpha \beta}{2} H_0(\alpha) H_0(\beta).
\]
This may be equivalently expressed at the level of generating functions in the following way.
\[
\frac{x}{a} \frac{dH^{[a]}_{0,1}}{dx} - H^{[a]}_{0,1} = \frac{x^2}{2} \left( \frac{dH^{[a]}_{0,1}}{dx} \right)^2 \qquad \Rightarrow \qquad \frac{z}{a(1-az^a)}\frac{dH^{[a]}_{0,1}}{dz} - H^{[a]}_{0,1} = \frac{z^2}{2(1-az^a)^2}\left(\frac{dH^{[a]}_{0,1}}{dz}\right)^2
\]
This is satisfied by $H^{[a]}_{0,1} = \frac{z^a}{a}-\frac{z^{2a}}{2}$, from which we immediately obtain the desired result.
\end{proof}


\begin{lemma}
The bidifferential $\Omega^{[a]}_{0,2}$ satisfies the following equation.
\[
\Omega^{[a]}_{0,2}=\frac{dz_1~dz_2}{(z_1-z_2)^2}-\frac{dx_1~dx_2}{(x_1-x_2)^2}.
\]
\end{lemma}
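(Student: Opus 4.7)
The plan is to derive a partial differential equation for $H^{[a]}_{0,2}$ from the cut-and-join recursion of Proposition~\ref{th:rec} and then verify that the claimed closed form is its unique formal solution. Specialising Proposition~\ref{th:rec} to $(g,n)=(0,2)$, in which $g=0$ kills the $H_{g-1}$ term and the only admissible genus split is $g_1=g_2=0$, the recursion collapses to
\[
\frac{\mu_1+\mu_2}{a}\,H_0(\mu_1,\mu_2) = (\mu_1+\mu_2)\,H_0(\mu_1+\mu_2) + \sum_{i=1}^{2}\sum_{\alpha+\beta=\mu_i}\alpha\beta\,H_0(\mu_{3-i},\alpha)\,H_0(\beta).
\]
Multiplying through by $x_1^{\mu_1}x_2^{\mu_2}$ and summing over $\mu_1,\mu_2 \geq 1$ converts this into a quadratic PDE for $H^{[a]}_{0,2}(x_1,x_2)$ whose only external input is the one-point function $H^{[a]}_{0,1}$, which has already been resolved in closed form by Lemma~\ref{th:01}.

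Next, I would change variables via $x_k = z_k \exp(-z_k^a)$, using $dx_k = (1-az_k^a)\exp(-z_k^a)\,dz_k$ to convert $\partial_{x_k}$ into $\partial_{z_k}$ and using $x\,dH^{[a]}_{0,1}/dx = z^a$ to eliminate every occurrence of $H^{[a]}_{0,1}$. In the resulting rational PDE, the candidate bidifferential $\frac{dz_1\,dz_2}{(z_1-z_2)^2} - \frac{dx_1\,dx_2}{(x_1-x_2)^2}$ is holomorphic across the diagonal $z_1=z_2$, because $x(z) = z + O(z^{a+1})$ near $z=0$ forces the principal parts of the two terms to agree there; consequently it genuinely admits an expansion as a formal power series in $x_1,x_2$ near the origin and serves as a legitimate candidate for $\partial_{x_1}\partial_{x_2} H^{[a]}_{0,2}\,dx_1\,dx_2$. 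Substituting this candidate into the PDE reduces the claim to a rational identity in $z_1,z_2$.

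The main obstacle is this final algebraic verification: the asymmetric appearance of $(z_1-z_2)^{-2}$ and $(x_1-x_2)^{-2}$ means one must expand the latter in $z$-coordinates, commute differential operators through the nontrivial Jacobian, and then confirm that all spurious poles along the diagonal cancel. Once the rational PDE is satisfied and the lowest nonvanishing coefficient of the expansion is matched against a direct cut-and-join computation to fix the constants of integration, uniqueness of the formal power-series solution forces the claimed identity.
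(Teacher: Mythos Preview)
Your strategy is the paper's: specialize the cut-and-join recursion to $(g,n)=(0,2)$, turn it into a PDE, and verify a closed form. The execution differs in one useful respect. Rather than substituting the candidate bidifferential---a second-derivative object---into what is a \emph{first-order} PDE for $H^{[a]}_{0,2}$ (which would force you to integrate back and track arbitrary functions of one variable, not merely ``constants of integration''), the paper works one derivative lower: it writes down a closed form for $x_1\partial_{x_1}H^{[a]}_{0,2}$ itself,
\[
x_1\frac{\partial}{\partial x_1}H^{[a]}_{0,2}(x_1,x_2)=\frac{x_2}{x_2-x_1}-\frac{z_2}{(z_2-z_1)(1-az_1^a)},
\]
checks directly that this (with its symmetric counterpart) satisfies the PDE, and only then applies $x_2\partial_{x_2}$ to read off $\Omega^{[a]}_{0,2}$ with nothing left to fix. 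This intermediate identity (equation~\eqref{eq:unst}) is also reused in the proof of Proposition~\ref{th:PDE}, so isolating it pays off later.
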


\begin{proof}
In the case $(g,n) = (0,2)$, Proposition~\ref{th:rec} states that at the level of generating functions,
\[
\left[\frac{1}{a}(1-az_1^a)x_1\frac{\partial}{\partial x_1}+\frac{1}{a}(1-az_2^a)x_2\frac{\partial}{\partial x_2}\right] H^{[a]}_{0,2}(x_1,x_2) = \frac{x_2z_1^a-x_1z_2^a}{x_1-x_2}.
\]
This is a special case of Proposition~\ref{th:PDE} below. Given that $H^{[a]}_{0,2}(x_1, x_2)$ must be symmetric in $x_1$ and $x_2$, the equation is satisfied by
\begin{equation} \label{eq:unst}
x_1\frac{\partial}{\partial x_1}H^{[a]}_{0,2}(x_1,x_2)=\frac{x_2}{x_2-x_1}-\frac{z_2}{(z_2-z_1)(1-az_1^a)}.
\end{equation}
Now apply $x_2\frac{\partial}{\partial x_2}=\frac{z_2}{1-az_2^a}\frac{\partial}{\partial z_2}$ to this equation to obtain
\[
x_1x_2\frac{\partial^2}{\partial x_1\partial x_2}H^{[a]}_{0,2}(x_1,x_2)=-\frac{x_1x_2}{(x_1-x_2)^2}+\frac{z_1x_2}{(z_1-z_2)^2(1-az_1^a)} \frac{dz_2}{dx_2} =-\frac{x_1x_2}{(x_1-x_2)^2}+\frac{x_1x_2}{(z_1-z_2)^2} \frac{dz_1}{dx_1} \frac{dz_2}{dx_2},
\]
from which we immediately obtain the desired result.
\end{proof}

The following proposition expresses the cut-and-join recursion in terms of the orbifold Hurwitz number generating functions and generalises Theorem~4.4 from~\cite{GJVNum}.  

\begin{proposition}  \label{th:PDE}
For $2g-2+n>1$, the orbifold Hurwitz number generating functions satisfy the following partial differential equation. We use the notation $S = \{1, 2, \ldots, n\}$ and $x_I = (x_{i_1}, x_{i_2}, \ldots, x_{i_N})$ for $I = \{i_1, i_2, \ldots, i_N\}$. 
\begin{align}\label{eq:genrec1}
\Bigg[ 2g-&2+n+\frac{1}{a}\sum_{i=1}^n(1-az_i^a)x_i\frac{\partial}{\partial x_i} \Bigg] H^{[a]}_{g,n}(x_{S}) \\ \nonumber
&= \sum_{i< j} \frac{1}{z_i-z_j} \left( \frac{z_j}{1-az_i^a}x_i\frac{\partial}{\partial x_i}-\frac{z_i}{1-az_j^a}x_j\frac{\partial}{\partial x_j} \right) \left[ H^{[a]}_{g,n-1}(x_{S \setminus \{j\}})+H^{[a]}_{g,n-1}(x_{S \setminus \{i\}}) \right]\\ \nonumber
&+\frac{1}{2}\sum_{i=1}^n \left[u_1u_2\frac{\partial^2}{\partial u_1\partial u_2}H^{[a]}_{g-1,n+1}(u_1, u_2, z_{S \setminus \{i\}}) \right]_{\substack{u_1=x_i \\ u_2=x_i}} \\ \nonumber
&+ \frac{1}{2} \sum_{i=1}^n \mathop{\sum^{\mathrm{stable}}_{g_1+g_2=g}}_{I\sqcup J=S \setminus \{i\}} \left[ x_i\frac{\partial}{\partial x_i} H^{[a]}_{g_1,|I|+1}(x_i,x_I) \right] \left[ x_i\frac{\partial }{\partial x_i}H^{[a]}_{g_2,|J|+1}(x_i,x_J) \right]
\end{align}
The final summation is stable in the sense that we omit all terms which involve $H^{[a]}_{0,1}$ or $H^{[a]}_{0,2}$.
\end{proposition}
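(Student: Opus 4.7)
The plan is to multiply the cut-and-join recursion \eqref{eq:simrec} of Proposition~\ref{th:rec} by $\prod_{k=1}^{n} x_k^{\mu_k}$, sum over $\mu_1, \ldots, \mu_n \geq 1$, and translate each combinatorial term into a generating-function expression matching \eqref{eq:genrec1}. The left-hand side $mH_g(\mu)$ summed this way immediately produces $(2g-2+n)H^{[a]}_{g,n} + \tfrac{1}{a}\sum_i x_i\partial_{x_i}H^{[a]}_{g,n}$; the additional $-z_i^a\,x_i\partial_{x_i}H^{[a]}_{g,n}$ summand visible on the left of \eqref{eq:genrec1} will be produced at the end by moving certain unstable $H^{[a]}_{0,1}$ contributions across.

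I translate the three combinatorial pieces separately. The self-cut becomes, directly, $\tfrac{1}{2}[u_1 u_2 \partial_{u_1}\partial_{u_2}H^{[a]}_{g-1,n+1}(u_1,u_2,x_{S\setminus\{i\}})]_{u_1=u_2=x_i}$. The split-cut becomes
\[
\tfrac{1}{2}\sum_{i=1}^n\sum_{\substack{g_1+g_2=g\\ I\sqcup J=S\setminus\{i\}}}\bigl[x_i\partial_{x_i}H^{[a]}_{g_1,|I|+1}(x_i,x_I)\bigr]\bigl[x_i\partial_{x_i}H^{[a]}_{g_2,|J|+1}(x_i,x_J)\bigr],
\]
which I decompose into stable terms (matching \eqref{eq:genrec1} verbatim) and unstable terms. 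Under the hypothesis $2g-2+n>1$ the only unstable factors that can appear involve $H^{[a]}_{0,1}$ or $H^{[a]}_{0,2}$, and both factors are never simultaneously unstable. Using $x\partial_x H^{[a]}_{0,1}=z^a$ from Lemma~\ref{th:01}, the $H^{[a]}_{0,1}$ contributions bundle into $\sum_i z_i^a\,x_i\partial_{x_i}H^{[a]}_{g,n}$, which I transfer to the left-hand side to produce the $(1-az_i^a)$ prefactor on the left of \eqref{eq:genrec1}.

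For the join, the identity $\sum_{\mu_i+\mu_j=\nu}x_i^{\mu_i}x_j^{\mu_j}=\tfrac{x_i x_j(x_i^{\nu-1}-x_j^{\nu-1})}{x_i-x_j}$ together with the symmetry of $H^{[a]}_{g,n-1}$ in its arguments yields, after symmetrising across ordered pairs,
\[
J \;=\; \sum_{i\neq k}\frac{x_k}{x_i-x_k}\,x_i\partial_{x_i}H^{[a]}_{g,n-1}(x_{S\setminus\{k\}}),
\]
and the leftover unstable $H^{[a]}_{0,2}$ piece of the split-cut is
\[
K \;=\; \sum_{i\neq k}\bigl[x_i\partial_{x_i}H^{[a]}_{0,2}(x_i,x_k)\bigr]\,x_i\partial_{x_i}H^{[a]}_{g,n-1}(x_{S\setminus\{k\}}).
\]
Substituting the explicit formula \eqref{eq:unst}, namely $x_i\partial_{x_i}H^{[a]}_{0,2}(x_i,x_k)=\tfrac{x_k}{x_k-x_i}-\tfrac{z_k}{(z_k-z_i)(1-az_i^a)}$, the rational-in-$x$ portion of $K$ exactly cancels $J$ via $\tfrac{x_k}{x_i-x_k}+\tfrac{x_k}{x_k-x_i}=0$, leaving
\[
J+K \;=\; \sum_{i\neq k}\frac{z_k}{(z_i-z_k)(1-az_i^a)}\,x_i\partial_{x_i}H^{[a]}_{g,n-1}(x_{S\setminus\{k\}}).
\]
Desymmetrising this sum over $i\neq k$ into a sum over $i<j$, and noting that $x_i\partial_{x_i}H^{[a]}_{g,n-1}(x_{S\setminus\{i\}})=0$ so that the vanishing pieces can be freely added back to produce $H^{[a]}_{g,n-1}(x_{S\setminus\{j\}})+H^{[a]}_{g,n-1}(x_{S\setminus\{i\}})$, delivers precisely the first sum on the right of \eqref{eq:genrec1}.

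The \emph{main obstacle} is the cancellation in the previous display. One must recognise that the unstable $H^{[a]}_{0,2}$ contribution $K$ is not a nuisance to be discarded but is algebraically welded onto the join term $J$, and the combination collapses to the clean $(z_i-z_j)^{-1}$ shape only after invoking the spectral-curve parametrisation $x(z)=ze^{-z^a}$ and the explicit form of $H^{[a]}_{0,2}$ from the preceding lemma. Overlooking $K$ leaves behind a spurious residue $\sum_{i\neq k}x_i\partial_{x_i}H^{[a]}_{g,n-1}(x_{S\setminus\{k\}})$ and breaks the identification with \eqref{eq:genrec1}.
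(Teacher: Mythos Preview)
Your proposal is correct and follows essentially the same approach as the paper: sum the cut-and-join recursion against $\prod x_k^{\mu_k}$, separate the stable from the unstable pieces of the quadratic term, use $x\partial_x H^{[a]}_{0,1}=z^a$ to move the $H^{[a]}_{0,1}$ contributions to the left, and substitute the explicit formula \eqref{eq:unst} for $x_i\partial_{x_i}H^{[a]}_{0,2}$ so that its $x$-rational part cancels against the join term. The only cosmetic difference is that the paper records the intermediate identity \eqref{eq:genrec0} in the $i<j$ form with the operator $\frac{x_ix_j}{x_i-x_j}(\partial_{x_i}-\partial_{x_j})$, whereas you pass to the desymmetrised sum over ordered pairs $i\neq k$ first and resymmetrise at the end; the two are equivalent and your identification of the key cancellation is exactly the one the paper makes.
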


\begin{proof}
Apply the operator
\[
\sum_{\mu_1, \ldots, \mu_n = 1}^\infty \big[ \cdot \big]~x_1^{\mu_1} \cdots x_n^{\mu_n}
\]
to both sides of the cut-and-join recursion to obtain the following partial differential equation satisfied by the expansion of $H^{[a]}_{g,n}$ around $x_1= \cdots = x_n = 0$.
\begin{align} \label{eq:genrec0}
\Bigg[ 2g-&2+n + \frac{1}{a}\sum_{i=1}^nx_i\frac{\partial}{\partial x_i}\Bigg] H^{[a]}_{g,n}(x_{S}) = \sum_{i< j}\frac{x_ix_j}{x_i-x_j}\left(\frac{\partial}{\partial x_i}-\frac{\partial}{\partial x_j}\right) \left[ H^{[a]}_{g,n-1}(x_{S \setminus \{j\}}) + H^{[a]}_{g,n-1}(x_{S \setminus \{i\}}) \right] \\ \nonumber
&+\sum_{i \neq j} \left[ x_i\frac{\partial}{\partial x_i} H^{[a]}_{g,n-1}(x_{S \setminus \{j\}}) \right] \left[ x_i\frac{\partial}{\partial x_i}H^{[a]}_{0,2}(x_i,x_j) \right] + \sum_{i=1}^n \left[ x_i\frac{\partial}{\partial x_i}H^{[a]}_{g,n}(x_{S}) \right] \left[ x_i\frac{\partial}{\partial x_i}H^{[a]}_{0,1}(x_i) \right] \\ \nonumber
&+ \frac{1}{2} \sum_{i=1}^n \left[ u_1u_2\frac{\partial^2}{\partial u_1\partial u_2}H^{[a]}_{g-1,n+1}(u_1,u_2,x_{S \setminus \{i\}}) \right]_{\substack{u_1=x_i \\ u_2=x_i}} \\ \nonumber
&+ \frac{1}{2}\sum_{i=1}^n\ \mathop{\sum^{\mathrm{stable}}_{g_1+g_2=g}}_{I\sqcup J=S \setminus \{i\}} \left[ x_i\frac{\partial}{\partial x_i}H^{[a]}_{g_1,|I|+1}(x_i,x_I) \right] \left[ x_i\frac{\partial}{\partial x_i}H^{[a]}_{g_2,|J|+1}(x_i,x_J) \right]
\end{align}
We have used here the fact that
\[
\frac{x_1x_2}{x_1-x_2}\left( \frac{\partial}{\partial x_1}-\frac{\partial}{\partial x_2} \right) \left[ x_1^N+x_2^N \right] = N(x_1^{N-1}x_2+x_1^{N-2}x_2^2+ \cdots +x_1x_2^{N-1}).
\]

Substitute the expression $x_i\frac{\partial}{\partial x_i}H^{[a]}_{0,2}(x_i,x_j)=\frac{x_j}{x_j-x_i}-\frac{z_j}{(z_j-z_i)(1-az_i^a)}$ from equation \eqref{eq:unst} into equation \eqref{eq:genrec0} to get cancellation of all terms involving $\frac{x_ix_j}{x_i-x_j}$. Furthermore, move the terms involving $x_i\frac{\partial}{\partial x_i}H_0(x_i) = z_i^a$ calculated in Lemma~\ref{th:01} to the left hand side in order to obtain the desired result.
\end{proof}

Many of the cancellations in the proof of Proposition~\ref{th:PDE} do not occur in the special cases $(g,n) = (0,3)$ and $(g,n) = (1,1)$. Nevertheless, equation~\eqref{eq:genrec0} is still satisfied in these special cases and simplifies to the following PDEs.
\begin{proposition}  \label{th:PDE1}
The orbifold Hurwitz generating functions $H^{[a]}_{0,3}$ and $H^{[a]}_{1,1}$ satisfy the following, where the subscripts in the first equation are to be interpreted modulo 3.
\begin{align}  \label{eq:PDE03}
\left[ 1+\frac{1}{a}\sum_{i=1}^3z_i\frac{\partial}{\partial z_i} \right] H^{[a]}_{0,3}(x_1,x_2,x_3) &= \frac{z_1z_2z_3}{(z_1-z_2)(z_2-z_3)(z_3-z_1)} \sum_{i=1}^3 \frac{z_{i-1}-z_{i+1}}{z_i(1-az_i^a)^2}  \\
\left[ 1+\frac{1}{a}z_1\frac{\partial}{\partial z_1} \right] H^{[a]}_{1,1}(x_1) &= \frac{1}{2}\frac{x_1^2}{dx_1~dx_1}\left.\left( \frac{dx_1~dx_2}{(x_1-x_2)^2}-\frac{dz_1~dz_2}{(z_1-z_2)^2} \right)\right|_{z_2=z_1}.
 \label{eq:PDE11}
\end{align}
\end{proposition}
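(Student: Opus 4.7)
The plan is to apply equation~\eqref{eq:genrec0} --- which was derived from the cut-and-join recursion without any stability hypothesis --- directly to the two exceptional cases $(g,n)=(0,3)$ and $(1,1)$, and then to simplify by substituting the closed-form expressions for $H^{[a]}_{0,1}$ from Lemma~\ref{th:01} and for $x_i\partial_{x_i} H^{[a]}_{0,2}$ from equation~\eqref{eq:unst}. In both cases, the first step is to move the term $\sum_i [x_i\partial_{x_i} H^{[a]}_{g,n}][x_i\partial_{x_i} H^{[a]}_{0,1}] = \sum_i z_i^a\,x_i\partial_{x_i} H^{[a]}_{g,n}$ to the left-hand side, producing the operator $2g-2+n+\frac{1}{a}\sum_i(1-az_i^a)x_i\partial_{x_i}$. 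Since $(1-az_i^a)x_i\partial_{x_i}=z_i\partial_{z_i}$ by the chain rule, this gives precisely the left-hand side of both \eqref{eq:PDE03} and \eqref{eq:PDE11}.

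For the $(1,1)$ case the right-hand side of \eqref{eq:genrec0} collapses almost completely: the sums $\sum_{i<j}$ and $\sum_{i\neq j}$ are empty since $|S|=1$, and the stable splitting sum is empty because any split with $g_1+g_2=1$ and $|I|+|J|=0$ would involve the unstable $H^{[a]}_{0,1}$. Only the genus-reduction term $\frac{1}{2}[u_1u_2\,\partial_{u_1}\partial_{u_2} H^{[a]}_{0,2}(u_1,u_2)]_{u_1=u_2=x_1}$ survives. Using the relation $\Omega^{[a]}_{0,2} = \partial_{x_1}\partial_{x_2} H^{[a]}_{0,2}\,dx_1\,dx_2$ together with the explicit formula for $\Omega^{[a]}_{0,2}$ from the preceding lemma, interpreting the restriction to the diagonal as a well-defined quadratic differential in $z_1$ (the individual double poles cancel), and dividing by $(dx_1)^2$, one recovers \eqref{eq:PDE11} after tracking signs.

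For the $(0,3)$ case the genus-reduction term vanishes and there are no admissible stable splittings, since $g_1+g_2=0$ together with $|I|+|J|=2$ forces at least one of the factors to be $H^{[a]}_{0,1}$ or $H^{[a]}_{0,2}$. The remaining task is to substitute the explicit expression $x_i\partial_{x_i} H^{[a]}_{0,2}(x_i,x_j) = \frac{x_j}{x_j-x_i} - \frac{z_j}{(z_j-z_i)(1-az_i^a)}$ from \eqref{eq:unst} into the $\sum_{i\neq j}$ term and to combine it with the $\sum_{i<j}\frac{x_ix_j}{x_i-x_j}(\partial_{x_i}-\partial_{x_j})H^{[a]}_{0,2}$ contribution. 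As in the proof of Proposition~\ref{th:PDE}, the pieces $\frac{x_j}{x_j-x_i}$ exactly cancel the $\frac{x_ix_j}{x_i-x_j}$ contributions, leaving only rational functions of $z_1,z_2,z_3$ with denominator factors $(1-az_i^a)$ and $(z_i-z_j)$.

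The main obstacle is the rational-function identity required to match the clean symmetric form on the right of \eqref{eq:PDE03}. The strategy is to collect the six remaining contributions, indexed by the ordered pairs $(i,j) \in \{1,2,3\}^2$ with $i \neq j$, over the common denominator $(z_1-z_2)(z_2-z_3)(z_3-z_1)\prod_{k=1}^3(1-az_k^a)^2$, and to verify that the resulting polynomial identity agrees with the numerator of the asserted expression. This can be checked by direct expansion, or more economically by matching the principal parts at each pole $z_i=z_j$ and at $(1-az_i^a)=0$, and then comparing overall degrees.
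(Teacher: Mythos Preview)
Your approach is essentially the one the paper indicates (it gives no detailed proof, only the remark that \eqref{eq:genrec0} ``simplifies'' to these two PDEs in the exceptional cases), and your treatment of $(g,n)=(1,1)$ is clean and correct.

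There is, however, a genuine gap in your $(0,3)$ argument. You assert that after substituting \eqref{eq:unst} the pieces $\tfrac{x_j}{x_j-x_i}$ ``exactly cancel'' the $\tfrac{x_ix_j}{x_i-x_j}$ contributions, leaving only rational functions of $z_1,z_2,z_3$. That is not quite what happens. Write $A_i^j:=x_i\partial_{x_i}H^{[a]}_{0,2}(x_i,x_j)=X_i^j+Z_i^j$ with $X_i^j=\tfrac{x_j}{x_j-x_i}$ and $Z_i^j=-\tfrac{z_j}{(z_j-z_i)(1-az_i^a)}$. For $(0,3)$ the $\sum_{i\neq j}$ term is $2\sum_i A_i^jA_i^k$ (the two ordered pairs with the same $i$ give the same product), while the $\sum_{i<j}$ term reduces to $-\sum_i\bigl[X_i^jA_i^k+X_i^kA_i^j\bigr]$. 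Adding them gives
\[
\sum_i\Bigl[2A_i^jA_i^k-X_i^jA_i^k-X_i^kA_i^j\Bigr]
=\sum_i\Bigl[X_i^jZ_i^k+X_i^kZ_i^j+2Z_i^jZ_i^k\Bigr].
\]
So only the pure $X_i^jX_i^k$ part cancels; the cross terms $X_i^jZ_i^k$ survive, and these individually involve the transcendental quantities $\tfrac{x_j}{x_j-x_i}$. The nontrivial step you still need is to show that the \emph{sum} of these six cross terms (together with the $2Z_i^jZ_i^k$ pieces) collapses to the rational function on the right of \eqref{eq:PDE03}. This does hold --- using $X_i^j+X_j^i=1$ to pair terms and then checking the residues at $z_i=z_j$ and at $1-az_i^a=0$ as you suggest --- but your write-up currently claims the $x$-dependence has already disappeared before that step, which it has not. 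You should rephrase the ``remaining task'' paragraph accordingly.
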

Equations (\ref{eq:PDE03}) and  (\ref{eq:PDE11}) uniquely determine $H^{[a]}_{0,3}(x_1,x_2,x_3)$ and $H^{[a]}_{1,1}(x_1)$.  In (\ref{eq:PDE11}), the right hand side is meromorphic at $z_1=\alpha$ with no other poles, by (\ref{eq:princ02}).

\subsection{Double Hurwitz numbers}

An alternative proof of the cut-and-join equations arises by considering the action of transpositions in the symmetric group and so is more natural from the disconnected Hurwitz number viewpoint.  It assembles the Hurwitz numbers into the generating functions \eqref{eq:tau}.  We describe it here for completeness, incorporating the generalisation which satisfies the same cut-and-join equations as simple Hurwitz numbers.

Orbifold Hurwitz numbers are particular examples of double Hurwitz numbers, which count branched covers of $\mathbb{P}^1$ with specified ramification over two points and simple ramification elsewhere.  
\begin{definition}
The {\em double Hurwitz number} $H_{g;\mu, \nu}$ is the number of genus $g$ branched covers of $\mathbb{P}^1$ such that
\begin{itemize}
\item the ramification profile over $\infty$ is given by the partition $\mu = (\mu_1, \mu_2, \ldots, \mu_n)$;
\item the ramification profile over 0 is given by a partition $\nu = (\nu_1, \nu_2, \ldots, \nu_m)$; and
\item the only other ramification is simple and occurs over $m$ fixed points.
\end{itemize}
\end{definition}
The Hurwitz number must be zero unless $|\mu| = |\nu|$ and the Riemann--Hurwitz formula implies that $m = 2g - 2 + \ell(\mu) + \ell(\nu)$. 

Shadrin, Spitz and Zvonkine~\cite{SSZDouble} use the infinite wedge space formalism together with calculations involving characters of the symmetric group to prove that double Hurwitz numbers satisfy the cut-and-join equation.  The cut-and-join equation takes the form of a partial differential equation satisfied by the following generating function for double Hurwitz numbers here written in the special case of orbifold Hurwitz numbers.
\begin{align}  \label{eq:tau}
\mathbf{H}^{[a]\bullet}(s; p_1, p_2, \ldots) &= \sum_{m=0}^\infty \sum_{\mu} H_{g;\mu}^{[a]\bullet} \frac{s^m}{m!} p_{\mu_1} p_{\mu_2} \cdots p_{\mu_n} \\
\mathbf{H}^{[a]}(s; p_1, p_2, \ldots) &= \sum_{m=0}^\infty \sum_{\mu} H_{g;\mu}^{[a]} \frac{s^m}{m!} p_{\mu_1} p_{\mu_2} \cdots p_{\mu_n}\nonumber
\end{align}
Here, the inner summations are over all partitions $\mu$, including the empty partition.  The relation between disconnected and connected Hurwitz numbers can be succinctly stated using the well-known logarithm trick.
\[
\mathbf{H}^{[a]\bullet} = \exp \mathbf{H}^{[a]}.
\]
The cut-and-join recursion can be naturally expressed as the following partial differential equation for the generating function \eqref{eq:tau}. 
\begin{proposition}\cite{SSZDouble}
The generating function for connected orbifold Hurwitz numbers satisfies the following partial differential equation.
\[
\frac{\partial \mathbf{H}^{[a]}}{\partial s} = \frac{1}{2} \sum_{i,j=1}^\infty \left[ (i+j) p_i p_j \frac{\partial \mathbf{H}^{[a]}}{\partial p_{i+j}} + i j p_{i+j} \frac{\partial^2 \mathbf{H}^{[a]}}{\partial p_i \partial p_j} + i j p_{i+j} \frac{\partial \mathbf{H}^{[a]}}{\partial p_i} \cdot \frac{\partial \mathbf{H}^{[a]}}{\partial p_j} \right].
\]
\end{proposition}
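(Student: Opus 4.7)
The plan is to first establish the disconnected analogue of the equation,
\[
\frac{\partial \mathbf{H}^{[a]\bullet}}{\partial s} = \frac{1}{2} \sum_{i,j=1}^\infty \left[ (i+j)\, p_i p_j \frac{\partial \mathbf{H}^{[a]\bullet}}{\partial p_{i+j}} + ij\, p_{i+j} \frac{\partial^2 \mathbf{H}^{[a]\bullet}}{\partial p_i \partial p_j} \right],
\]
and then convert it into the stated connected version by applying the logarithm trick $\mathbf{H}^{[a]\bullet} = \exp \mathbf{H}^{[a]}$.

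To derive the disconnected equation I would use the monodromy description supplied by the Riemann existence theorem. A disconnected degree $d$ cover contributing to $H_{g;\mu}^{[a]\bullet}$ with $m$ simple branch points corresponds to a tuple $(\rho_0, \tau_1, \ldots, \tau_m, \rho_\infty) \in S_d^{m+2}$ with $\rho_0$ of cycle type $(a, a, \ldots, a)$, $\rho_\infty$ of cycle type $\mu$, each $\tau_i$ a transposition, and $\rho_\infty \tau_m \cdots \tau_1 \rho_0 = \mathrm{id}$, taken up to simultaneous conjugation. Since $\partial_s$ shifts the index $m \to m-1$ in the $s^m/m!$ expansion, the claimed identity is equivalent to the combinatorial statement that appending one extra transposition to such a tuple implements the differential operator on the right hand side at the level of the $p_\mu$-monomial encoding the cycle type of $\rho_\infty$. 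This in turn is the classical cut-and-join computation in $S_d$: multiplying a permutation by a transposition either splits a single cycle of length $i+j$ into cycles of lengths $i$ and $j$, which is captured by the operator $(i+j)\, p_i p_j\, \partial_{p_{i+j}}$, or joins two cycles of lengths $i$ and $j$ into a single cycle of length $i+j$, captured by $ij\, p_{i+j}\, \partial_{p_i}\partial_{p_j}$. The overall factor $\tfrac{1}{2}$ compensates the double counting in the unordered pair $\{i,j\}$.

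Passage to the connected statement is then essentially formal. Every first-order operator $D \in \{\partial_s, \partial_{p_k}\}$ satisfies $D \mathbf{H}^{[a]\bullet} = \mathbf{H}^{[a]\bullet}\cdot D\mathbf{H}^{[a]}$, and the Leibniz rule applied to $\mathbf{H}^{[a]\bullet} = \exp \mathbf{H}^{[a]}$ gives
\[
\frac{\partial^2 \mathbf{H}^{[a]\bullet}}{\partial p_i \partial p_j} = \mathbf{H}^{[a]\bullet} \left( \frac{\partial^2 \mathbf{H}^{[a]}}{\partial p_i \partial p_j} + \frac{\partial \mathbf{H}^{[a]}}{\partial p_i} \cdot \frac{\partial \mathbf{H}^{[a]}}{\partial p_j} \right).
\]
Substituting into the disconnected equation and cancelling the common factor $\mathbf{H}^{[a]\bullet}$ produces the desired identity, the new quadratic term $ij\, p_{i+j}\, \partial_{p_i}\mathbf{H}^{[a]}\cdot \partial_{p_j}\mathbf{H}^{[a]}$ arising precisely from the cross-term in the second derivative of the exponential.

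The main obstacle is verifying the combinatorial identity underlying the disconnected cut-and-join equation, where one must count carefully the number of transpositions realising each cut or join, treat separately the cases of equal versus distinct cycle lengths, and match the resulting integer factors with the coefficients $(i+j)$, $ij$, and $\tfrac{1}{2}$ in the differential operator. Once this bookkeeping is settled, the logarithm trick is a one-line manipulation and the proposition follows.
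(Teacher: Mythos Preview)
Your proposal is correct, and in fact the paper does not supply its own proof of this proposition: it is stated with a citation to Shadrin--Spitz--Zvonkine, who are described as using the infinite wedge space formalism together with character calculations in the symmetric group. Your route is the more elementary, classical one---the direct combinatorics of multiplying a permutation by a transposition, followed by the exponential/logarithm passage between disconnected and connected generating functions---and this is precisely the viewpoint the paper alludes to in the sentence preceding the proposition (``An alternative proof of the cut-and-join equations arises by considering the action of transpositions in the symmetric group and so is more natural from the disconnected Hurwitz number viewpoint''). So your argument is in the spirit of what the authors sketch informally, whereas the cited reference takes a representation-theoretic detour that buys greater generality (completed cycles, arbitrary double Hurwitz numbers) at the cost of heavier machinery. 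For the specific statement here your direct approach is entirely adequate, and the only genuine work---as you correctly identify---is the careful enumeration of transpositions realising each cut or join and matching the multiplicities to the operator coefficients.
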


\section{An ELSV-type formula}  \label{sec:ELSV}

For simple Hurwitz numbers we have the remarkable ELSV-formula due to Ekedahl, Lando, Shapiro and Vainshtein~\cite{ELSV1, ELSV2, GV_ELSV}.  

For $i = 1, \ldots, n$, define the line bundle $\mathcal{L}_i$ on $\overline{\mathcal{M}}_{g,n}$ whose fibre over $[(C,p_1,\ldots,p_n)]$ is the cotangent space at the marked point $p_i$.  Denote its first Chern class by $\psi_i = c_1(\mathcal{L}_i) \in H^{2}(\overline{\mathcal{M}}_{g,n};\mathbb{Q})$ and refer to these as {\em descendent classes}.

If $\pi$ is the universal curve over $\overline{\mathcal{M}}$ and $\mathcal{K}_\pi$ is its relative dualizing line bundle, then the {\em Hodge bundle} is $\mathbb{E}_{g,n} := \pi_{*} \mathcal{K}_\pi$. Then define $\lambda_k = c_k(\mathbb{E}_{g,n})\in H^{2k}(\overline{\mathcal{M}}_{g,n},\mathbb{Q})$, the $k$th Chern class, and $\Lambda= 1-\lambda_1+ \cdots + (-1)^g\lambda_g$. 

For $0<2g-2+n$ the linear {\em Hodge integrals} are the top intersection products of classes $\{ \lambda_k\}$ and $\{ \psi_i \}_{1\leq i\leq n} $ that are of the form
\[
 \int_{\overline{\mathcal{M}}_{g,n}} \lambda_k \psi_1^{j_1}\cdots \psi_n^{j_n}. 
\]
The ELSV formula expresses Hurwitz numbers using linear Hodge integrals,
\[
H_{g,\mu} = \frac{|\mathrm{Aut}~\mu|}{m!} \prod_{i=1}^{\ell(\mu)} \frac{\mu_i^{\mu_i}}{\mu_i!} \int_{\overline{\mathcal{M}}_{g,\ell(\mu)}} \frac{\Lambda}{\prod_{i=1}^{\ell(\mu)}(1- mu_i \psi_i)}.
\]
Note that simple Hurwitz numbers are well defined for $g=0$ and $\ell(\mu)>0$ so one defines the notation: 
\[
 \int_{\overline{\mathcal{M}}_{0,1}} \frac{\lambda_0}{1- \mu_1 \psi_1}  = \frac{1}{\mu_1^2}
\]
\[
 \int_{\overline{\mathcal{M}}_{0,2}} \frac{\lambda_0}{(1- \mu_1 \psi_1)(1- \mu_2 \psi_2)}  = \frac{1}{\mu_1+\mu_2}.
\]

In \cite{JPT} Johnson, Pandharipande and Tseng introduced a generalisation of the ELSV formula which uses generalisations of the descendent classes, Hodge classes and the Hodge integrals. 

For a positive integer $a$, we consider the cyclic group $\mathbb{Z}_a$ and the {\em moduli space of admissible covers} $\overline{\mathcal{A}}_{g,\gamma}(\mathbb{Z}_a)$, which is a compact moduli space introduced by Harris and Mumford~\cite{HMuKod}. 

Let  $\gamma= (\gamma_1,\ldots, \gamma_n)$ with each $\gamma_i \in \mathbb{Z}_a$ then an {\em admissible cover} is pair $[\pi, \tau]$ where 
\begin{itemize}
	\item $\pi: D\rightarrow (C,p_1,\ldots,p_n)$ is a degree $a$ finite map of complete curves,
	\item $\tau: \mathbb{Z}_a\times D \rightarrow D$ is a $\mathbb{Z}_a$-action,
\end{itemize} 
such that:
\begin{itemize}
	\item The curve $D$ is possibly disconnected and nodal.
	\item The class of curves $[C,p_1,\ldots,p_n] \in \overline{\mathcal{M}}_{g,n}$ is stable.
	\item The map $\pi$ takes non-singular points to non-singular points, and nodes to nodes.
	\item The pair $[\pi,\tau]$ restricts to a principal $\mathbb{Z}_a$-bundle over the punctured non-singular locus with monodromy $\gamma_i$ at $p_i$.
	\item Distinct branches of nodes in $D$ map to distinct branches of nodes in $C$, with equal ramification orders.
	\item The monodromies of the $\mathbb{Z}_a$ bundle at the two branches of a node lie in opposite conjugacy classes.
\end{itemize} 

This moduli space is isomorphic to the moduli space of stable maps $\overline{\mathcal{A}}_{g,\gamma}(\mathbb{Z}_a) \cong\overline{\mathcal{M}}_{g,\gamma}(\mathcal{B}\mathbb{Z}_a)$, where $\mathcal{B}\mathbb{Z}_a$ is the classifying stack of $\mathbb{Z}_a$ given by a point with trivial $\mathbb{Z}_a$ action \cite{JKiOrb}.  One can obtain this isomorphism by viewing an admissible cover as a principle $\mathbb{Z}_a$-bundle over the stack quotient $[D/\mathbb{Z}_a]$. This induces a stable map to the classifying stack.  Note that when $a=1$, $\mathbb{Z}_1 = \{0\}$, so $\overline{\mathcal{M}}_{g,(0,\ldots,0)}(\mathcal{B}\mathbb{Z}_1) \cong \overline{\mathcal{M}}_{g,n}$. 

Define descendent classes (often known as {\em ancestor} classes in analogous contexts) by the pullback of the forgetful map $\overline{\mathcal{M}}_{g,\gamma}(\mathcal{B}\mathbb{Z}_a)\to\overline{\mathcal{M}}_{g,n}$ 
\[
\bar{\psi}_i = \varepsilon^*(\psi_i) \in H^2(\overline{\mathcal{M}}_{g,\gamma}(\mathcal{B}\mathbb{Z}_a) ;\mathbb{Q}) 
\]

Let $U$ be the irreducible representation $U: \mathbb{Z}_a \rightarrow \mathbb{C}^{*}$ defined on a cyclic generator $g$ by  $U(g) = \mathrm{exp}\left(\frac{2\pi i }{a}\right)$. For each map $[f : [D/\mathbb{Z}_a] \rightarrow \mathcal{B}\mathbb{Z}_a]\in \overline{\mathcal{M}}_{g,\gamma}(\mathcal{B}\mathbb{Z}_a)$ the $\mathbb{Z}_a$-action on $D$ and the functoriality of the global sections functor gives that $H^0 (D,\omega_D)$ is a $\mathbb{Z}_a$-representation.  Associate the $U$-summand of this representation to $[f]\in\overline{\mathcal{M}}_{g,\gamma}(\mathcal{B}\mathbb{Z}_a)$ creating the vector bundle
\[
\mathbb{E}^U \rightarrow \overline{\mathcal{M}}_{g,\gamma}(\mathcal{B}\mathbb{Z}_a) 
\]
which is called a {\em generalised Hodge bundle}.  Define the {\em generalised Hodge classes} to be the Chern classes of this vector bundle 
\[
\lambda^U_{k} = c_i(\mathbb{E}^U) \in H^{2k}( \overline{\mathcal{M}}_{g,\gamma}(\mathcal{B}\mathbb{Z}_a);\mathbb{Q}).
\]

For $2g-2+n > 0$ the linear {\em Hodge integrals} over $\overline{\mathcal{M}}_{g,\gamma}(\mathcal{B}G)$ are the top intersection products of classes $\{ \lambda^U_k\}$ and $\{ \hat{\psi}_i \}_{1\leq i\leq n} $ that are of the form
\[
\int_{\overline{\mathcal{M}}_{g,\gamma}(\mathcal{B}G)} \lambda_i^U \bar{\psi}_1^{m_1}\cdots \bar{\psi}_n^{m_n}
\]

Johnson, Pandharipande and Tseng expressed the orbifold Hurwitz number $H^{[a]}_{g,\mu}$ in terms of these generalised Hodge integrals. The term {\em Hurwitz--Hodge integral} was introduced in \cite{BGP} and they have been extensively studied in the literature~\cite{CadCav, CavGen, CCIT, Tseng_RRoch}.

\begin{theorem} \label{th:JPT}~\cite[Theorem 1]{JPT}
For $\mu = (\mu_1, \ldots, \mu_n)$, the orbifold Hurwitz number $H^{[a]}_{g;\mu}$ satisfies
\[
H^{[a]}_{g,\mu} =  \frac{m!}{|\mathrm{Aut}~\mu|}\, a^{1-g+\sum \{ \mu_i/a \}} \prod_{i=1}^n \frac{\mu_i^{\lfloor \mu_i / a\rfloor}}{\lfloor \mu_i/a \rfloor!} \int_{\overline{\mathcal{M}}_{g,[-\mu]}(\mathcal{B}\mathbb{Z}_a)}\frac{\sum_{i=0}^\infty (-a)^i \lambda_i^U }{\prod_{i=1}^n (1- \mu_i \bar{\psi_i})}.
\]
where $[-\mu] = (-\mu_1~\mathrm{mod}~a, \ldots, -\mu_n~\mathrm{mod}~a)$. One can interpret the unstable cases by defining
\[
\int_{\overline{\mathcal{M}}_{g,[-\mu]}(\mathcal{B}\mathbb{Z}_a)}\frac{\sum_{i=0}^\infty (-a)^i \lambda_i^U }{1- \mu_1 \bar{\psi_1}} = 
	\left\{ \begin{array}{ll}
		\frac{1}{a} \cdot \frac{1}{\mu_1^2} & \mbox{if $\mu_1 \equiv 0 \pmod{a}$}, \\
		0 & \mbox{otherwise.}
	\end{array}\right.
\]
\[
\int_{\overline{\mathcal{M}}_{g,[-\mu])}(\mathcal{B}\mathbb{Z}_a)}\frac{\sum_{i=0}^\infty (-a)^i \lambda_i^U }{(1- \mu_1 \bar{\psi_1})(1- \mu_2 \bar{\psi_2})} = 
	\left\{ \begin{array}{ll}
		\frac{1}{a} \cdot \frac{1}{\mu_1+\mu_2} & \mbox{if $\mu_1+\mu_2\equiv 0 \pmod{a}$}, \\
		0 & \mbox{otherwise.}
	\end{array}\right.
\]
\end{theorem}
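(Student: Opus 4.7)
The plan is to follow the virtual equivariant localization strategy used in the classical ELSV proof, adapted to the orbifold setting. First, reinterpret $H^{[a]}_{g,\mu}$ as $m!/|\mathrm{Aut}~\mu|$ times the degree of a branching morphism on the moduli space of relative orbifold stable maps from a genus $g$, $n$-pointed curve to the target $\mathbb{P}^1$ equipped with a $\mathbb{Z}_a$-orbifold point at $0$, with prescribed ramification profile $\mu$ over $\infty$. The combinatorial factor $m!$ records the orderings of the simple branch points, while the $\mathbb{Z}_a$-orbifold structure at $0$ replaces the condition that the profile there is $(a,a,\ldots,a)$.

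Next, equip $\mathbb{P}^1$ with its standard $\mathbb{C}^*$-action fixing $0$ and $\infty$ and apply virtual localization. The fixed locus decomposes into components indexed by decorated graphs whose dominant topology is: a single vertex over $\infty$ carrying a contracted genus $g$ component of the source curve, $n$ edges corresponding to rational $\mathbb{Z}_a$-orbifold rubber components mapping to $\mathbb{P}^1$ with degrees $\mu_i$, and orbifold marked points over $0$. The vertex-over-$\infty$ factor produces $\overline{\mathcal{M}}_{g,[-\mu]}(\mathcal{B}\mathbb{Z}_a)$ through the admissible-covers/stable-maps isomorphism of Jarvis--Kimura, with monodromy labels forced to be $-\mu_i \bmod a$ by compatibility of the local $\mathbb{Z}_a$-action at $0$ with the ramification order $\mu_i$ of the edge meeting the vertex.

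I would then compute each localization contribution. The edges contribute rubber integrals whose explicit evaluation produces the combinatorial prefactor $a^{1-g+\sum\{\mu_i/a\}}\prod_i \mu_i^{\lfloor\mu_i/a\rfloor}/\lfloor\mu_i/a\rfloor!$, generalising the $\mu_i^{\mu_i}/\mu_i!$ factor of the ordinary ELSV formula; here the fractional-part exponent $\{\mu_i/a\}$ arises from automorphisms of those rubber components whose orbifold ramification at $0$ is nontrivial. The inverse equivariant Euler class of the virtual normal bundle at the vertex over $\infty$ produces the integrand $\sum_{i\geq 0}(-a)^i\lambda_i^U/\prod_i(1-\mu_i\bar{\psi}_i)$: the generalised Hodge class series arises from the $U$-eigenspace of the $\mathbb{Z}_a$-action on the derived pushforward of the dualising sheaf of the contracted component, while the descendant factors come from the smoothing deformations of the $n$ nodes joining the vertex to the rubber edges, whose cotangent lines pull back from the usual $\psi_i$ on $\overline{\mathcal{M}}_{g,n}$.

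The hardest step will be the representation-theoretic bookkeeping in the virtual normal bundle: isolating precisely the $U$-summand of the Hodge piece requires matching characters of the $\mathbb{Z}_a$-action on deformation and obstruction spaces against the $\mathbb{C}^*$-weights, and correctly tracking the powers of $a$ attached to each eigenspace. The unstable cases $(g,n)=(0,1)$ and $(0,2)$ lie outside the range where virtual localization is directly defined and must be handled by separate enumeration, verifying that the connected orbifold Hurwitz numbers with one or two preimages over $\infty$ agree with the ad hoc values stated on the right hand side.
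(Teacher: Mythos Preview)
Your overall strategy matches what the paper indicates: this theorem is quoted from Johnson--Pandharipande--Tseng, and the paper simply records (without giving its own argument) that the proof proceeds by virtual localisation on the moduli space $\overline{\mathcal{M}}_g(\mathbb{P}^1[a],\mu)$ of stable maps to $\mathbb{P}^1$ with an $a$-fold orbifold point at $0$, relative to $\infty$ with profile $\mu$.

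There is, however, a geometric slip in your outline that would cause trouble if carried through. The contracted genus $g$ component in the dominant fixed locus lies over $0$, not over $\infty$: it is precisely because this component sits over the orbifold point $\mathcal{B}\mathbb{Z}_a \subset \mathbb{P}^1[a]$ that the corresponding moduli factor is $\overline{\mathcal{M}}_{g,[-\mu]}(\mathcal{B}\mathbb{Z}_a)$ rather than the ordinary $\overline{\mathcal{M}}_{g,n}$. A component contracted to $\infty$, which is a schematic point, carries no $\mathbb{Z}_a$-structure and cannot produce the twisted moduli space. Likewise the $n$ edges are not rubber but Galois covers of $\mathbb{P}^1[a]$ of degree $\mu_i$, totally ramified over $0$ and $\infty$; the monodromy $-\mu_i \bmod a$ at the node arises from the stack structure at $0$, while the relative/rubber geometry lives over $\infty$. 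With $0$ and $\infty$ exchanged, the rest of your sketch---edge contributions producing the combinatorial prefactor, the inverse Euler class of the virtual normal bundle at the vertex yielding the $\lambda^U$/$\bar\psi$ integrand, and the separate treatment of the unstable cases---is in line with the JPT argument that the paper cites.
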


The proof of this theorem is by virtual localisation on the moduli space of maps $\overline{\mathcal{M}}_{g}(\mathbb{P}^1[a],\mu)$, where $\mathbb{P}^1[a]$ is the projective space $\mathbb{P}^1$ with an $a$-fold orbifold point at 0. In fact, Johnson, Pandaripande and Tseng proved a more general statement than Theorem~\ref{th:JPT}. Their formula can be used to give an expression for any double Hurwitz number by choosing $a$ sufficiently large. 

\subsection{Orbifold Hurwitz generating function}\label{sec:genfun}
For a partition $\mu$ define an analogue of Witten's notation by: 
\begin{align*}
\left< \lambda_r^U \bar{\tau}_{m_1} \cdots \bar{\tau}_{m_n} \right>_{g,n}^{(\mu)} :=\int_{\overline{\mathcal{M}}_{g,[-\mu]}(\mathcal{B}G)} \lambda_i^U \bar{\psi}_1^{m_1}\cdots \bar{\psi}_n^{m_n} 
\end{align*}

Rearranging gives,
\[
H^{[a]}_{g,\mu} =  \frac{m!}{|\mathrm{Aut}~\mu|}\sum_{j_1,\ldots,j_{\ell(\mu)} \in \mathbb{Z}_+} a^{1-g+\frac{|\mu|}{a}} \left( \prod_{i=1}^{\ell(\mu)} \frac{(\mu_i/a)^{\lfloor \mu_i / a\rfloor}}{\lfloor \mu_i/a \rfloor!} \mu_i ^{j_i} \right) \left< \Lambda^U \bar{\tau}_{j_1}\cdots \bar{\tau}_{j_l}\right>_{g, \ell(\mu)}^{(\mu)}.
\]

So the orbifold Hurwitz generating function (\ref{orb_gen_func}) becomes
\[
H^{[a]}_{g,n}(x_1,\ldots,x_n) =\sum_{j_1,\ldots,j_n \in \mathbb{Z}_+}  \sum_{\mu\in\bz_+^n}  a^{1-g+\frac{|\mu|}{a}}  \left< \Lambda^U \bar{\tau}_{j_1}\cdots \bar{\tau}_{j_n}\right>_{g, n}^{(\mu)} \prod_{i=1}^n \frac{(\mu_i/a)^{\lfloor \mu_i / a\rfloor}}{\lfloor \mu_i/a \rfloor!} \mu_i ^{j_i}  x_i^{\mu_i}.
\]

If $\mu \equiv \nu \pmod{a}$ then $ \left< \Lambda^U \bar{\tau}_{j_1}\cdots \bar{\tau}_{j_n}\right>_{g, n}^{(\mu)} =\left< \Lambda^U \bar{\tau}_{j_1}\cdots \bar{\tau}_{j_n}\right>_{g,n}^{(\nu)}$. So we can sum over the mod-classes of $\mu$: 

\[
H^{[a]}_{g,n}(x_1,\ldots,x_n) = a^{1-g} \sum_{j_1,\ldots,j_n \in \mathbb{Z}_+} \sum_{\beta \in (\mathbb{Z}a)^n}   \left< \Lambda^U \bar{\tau}_{j_1}\cdots \bar{\tau}_{j_n}\right>_{g, n}^{(\beta)}   \sum_{\mu\in\bz_+^n, [\mu]=\beta}  a^{\frac{|\mu|}{a}} \prod_{i=1}^n \frac{(\mu_i/a)^{\lfloor \mu_i / a\rfloor}}{\lfloor \mu_i/a \rfloor!} \mu_i ^{j_i}  x_i^{\mu_i}.
\]

Now, for $\mu\in\bz_+^n, [\mu]=\beta$ we set $b_i = \frac{\mu_i - \alpha_i}{a} = \lfloor \mu_i/a \rfloor$. The generating function becomes:

\begin{align}
H^{[a]}_{g,n}(x_1,\ldots,x_n) =& a^{1-g} \sum_{j_1,\ldots,j_n \in \mathbb{Z}_+} \sum_{\beta \in (\mathbb{Z}a)^n}   a^{\frac{|\beta|}{a} }  \left< \Lambda^U \bar{\tau}_{j_1}\cdots \bar{\tau}_{j_n}\right>_{g,n}^{(\beta)}   \,\,   \prod_{i=1}^n \,\sum_{b_i=0}^\infty \frac{ \,\,  \left(ab_i+\beta_i\right) ^{b_i+j_i}}{b!}x_{i}^{\left( ab_i+\beta_i \right)}  \nonumber \\
	=& a^{1-g} \sum_{j_1,\ldots,j_n \in \mathbb{Z}_+} \sum_{\beta \in (\mathbb{Z}a)^n}   a^{\frac{|\beta|}{a} }  \left< \Lambda^U \bar{\tau}_{j_1}\cdots \bar{\tau}_{j_n}\right>_{g,n}^{(\beta)}  \,\,    \prod_{i=1}^n \, f_{\beta_i ,j_i} (x_i)  \label{orb_gen_func_fs}
\end{align}
where
\[
f_{r,k} (x) : = \sum_{b=0}^\infty \frac{(ab +r)^{b+k}}{b!} x^{ab +r}.
\]
For $r=1,\ldots,a$ define
\[\xi_{k+1}^{(r)}(z)=x\frac{d}{dx}\xi_{k}^{(r)}(z),\quad \xi_{-1}^{(r)}(z)=\left\{\begin{array}{ll}z^r/r&r=1,\ldots,a-1\\z^a&r=a.\end{array}\right. \]

\begin{lemma} \label{exp_gen_lemma}
The function $z(x)=  \sum \limits_{b=0}^\infty \dfrac{(ab+1)^{b-1}}{b!} x^{ab+1}$ satifies the equation $x = z(x) \exp(-z(x)^a)$. Furthermore,  $\dfrac{z(x)^r}{r}=  \sum \limits_{b=0}^\infty \dfrac{(ab+r)^{b-1}}{b!} x^{ab+r}$ \,and \,$ f_{r,k} (x) =  \xi_{k}^{(r)}(z(x))$.
\end{lemma}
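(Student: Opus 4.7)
The plan is to deduce the entire lemma from a single application of the classical Lagrange inversion formula to the relation $x = ze^{-z^a}$, supplemented by an induction on $k$ for the third assertion.

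Writing $x = z/\phi(z)$ with $\phi(z) = e^{z^a}$, the non-vanishing $\phi(0) = 1$ ensures that this equation admits a unique formal inverse $z(x) \in x\bc[[x]]$ satisfying the stated functional equation $x = z(x)e^{-z(x)^a}$. Lagrange's formula then asserts that for any formal power series $H$,
\begin{equation*}
[x^n]H(z(x)) = \frac{1}{n}[w^{n-1}]H'(w)\, e^{nw^a}.
\end{equation*}
I would apply this twice. Taking $H(w) = w$ and expanding $e^{nw^a} = \sum_b \frac{n^b w^{ab}}{b!}$ selects the term with $ab = n-1$, giving $[x^{ab+1}]z(x) = \frac{(ab+1)^{b-1}}{b!}$ and hence the first claim. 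Taking $H(w) = w^r/r$ so that $H'(w) = w^{r-1}$, and selecting the term with $ab = n-r$, gives the second identity $\frac{z(x)^r}{r} = \sum_{b\geq 0}\frac{(ab+r)^{b-1}}{b!}x^{ab+r}$.

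For the third statement $f_{r,k}(x) = \xi_k^{(r)}(z(x))$ I would induct on $k \geq -1$. The base case $k = -1$ is the preceding identity: for $r < a$ one has $\xi_{-1}^{(r)}(z(x)) = z(x)^r/r = f_{r,-1}(x)$, while for $r = a$ the convention $\xi_{-1}^{(a)}(z) = z^a$ is matched after accounting for the factor of $a$ coming from $z(x)^a = a\,f_{a,-1}(x)$. The inductive step is the termwise calculation
\begin{equation*}
x\frac{d}{dx}f_{r,k}(x) = \sum_{b\geq 0} \frac{(ab+r)^{b+k+1}}{b!}x^{ab+r} = f_{r,k+1}(x),
\end{equation*}
which matches the defining recursion $\xi_{k+1}^{(r)}(z) = x\frac{d}{dx}\xi_k^{(r)}(z)$.

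I anticipate no substantial obstacle: the proof rests on standard Lagrange inversion together with a one-line induction. The only care needed is in matching the normalisation conventions for $\xi_{-1}^{(r)}$ when $r = a$, which is a minor bookkeeping matter rather than a genuine difficulty.
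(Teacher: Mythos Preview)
Your proof is correct and takes a genuinely different route from the paper's. The paper argues combinatorially: it interprets $z(x)$ as the exponential generating function for cactus-node trees of type $(1,a,\ldots,a)$ (developed in Appendix~\ref{Comb_appendix}), and obtains the functional equation $z=x\exp(z^a)$ by decomposing such a tree at its distinguished node into a point together with a set of rooted $(a,\ldots,a)$-trees, each of which is in turn an $a$-tuple of $(1,a,\ldots,a)$-trees. The identity for $z(x)^r/r$ is obtained similarly by reading off $(r,a,\ldots,a)$-trees as $r$-tuples. Your Lagrange-inversion argument bypasses this combinatorial machinery entirely and is shorter and more self-contained; in particular it does not depend on Proposition~\ref{cactus-node_enum}. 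What the paper's approach buys is a bijective explanation of the coefficients, which ties into the graphical viewpoint used elsewhere (Appendix~\ref{sec:graphrep}); what your approach buys is economy and a proof that stands alone.

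On the third assertion, your induction is exactly what the paper does in its final bullet, and you are right to flag the $r=a$ base case as a normalisation issue: the paper's own proof writes $\bigl(x\frac{d}{dx}\bigr)^{k+1}\frac{z^r}{r}$ uniformly in $r$ and identifies this with $\xi_k^{(r)}$, which for $r=a$ sits in the same tension with the stated convention $\xi_{-1}^{(a)}=z^a$ that you noticed. So your treatment is no less rigorous here than the original.
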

\begin{proof}
 We recall some properties of exponential generating functions. Let $f(x)$ and $g(x)$ be exponential generating functions for collections of labelled objects $F$ and $G$ respectively. Then
\begin{itemize}
	\item $f(x)g(x)$ is the exponential generating function for sequences $(A_F, A_G)$ where $A_F \in F$ and $A_G \in G$.
	\item $f(x)^k$ is the exponential generating function for sequences of $k$ objects from $F$.
	\item $\exp f(x)$ is the exponential generating function for sets of elements from $F$ of all cardinalities.
\end{itemize}
We now use these properties to prove the result.
\begin{itemize}
	\item $z(x)$ is the exponential generating function for cactus-node trees of type $(1,a,\ldots,a)$ (see Appendix \ref{Comb_appendix}, Definition~\ref{cactus-node_def} and Proposition~\ref{cactus-node_enum}). Removing the node of type 1 we obtain a pair $(x,C)$ where $x$ is a point representing the node and $C$ is a collection of rooted-cactus-node trees of type $(a,\ldots,a)$ .
\begin{center}
\includegraphics[scale=0.6]{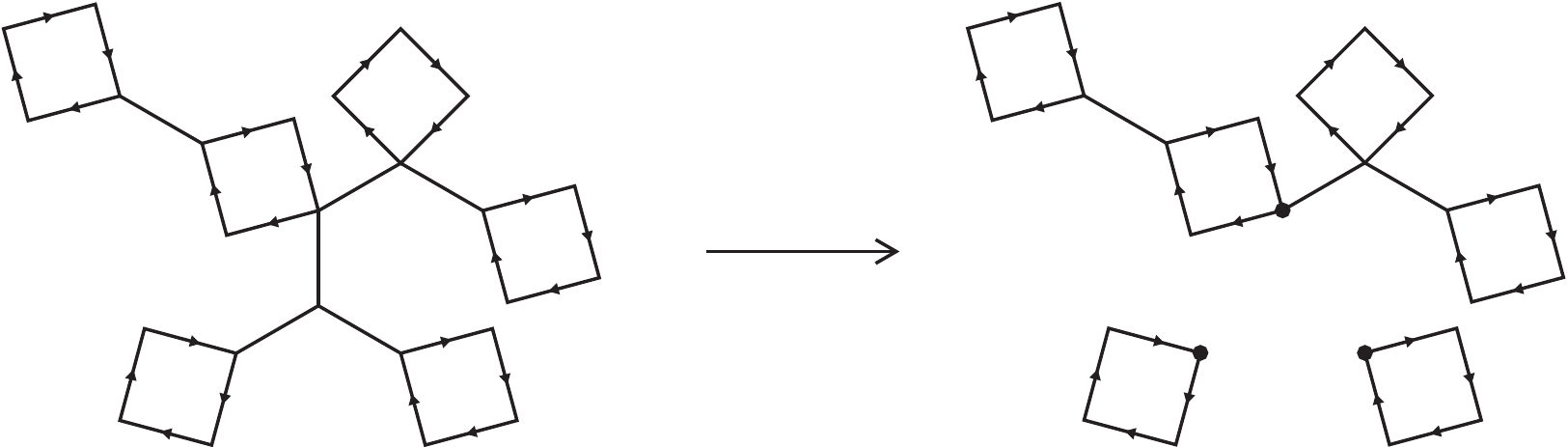}
\end{center}
A rooted-cactus-node trees of type $(a,\ldots,a)$ is a seqence $(T_1,\ldots,T_a)$ of cactus-node trees of type $(1,a,\ldots,a)$ . 
\begin{center}
\includegraphics[scale=0.6]{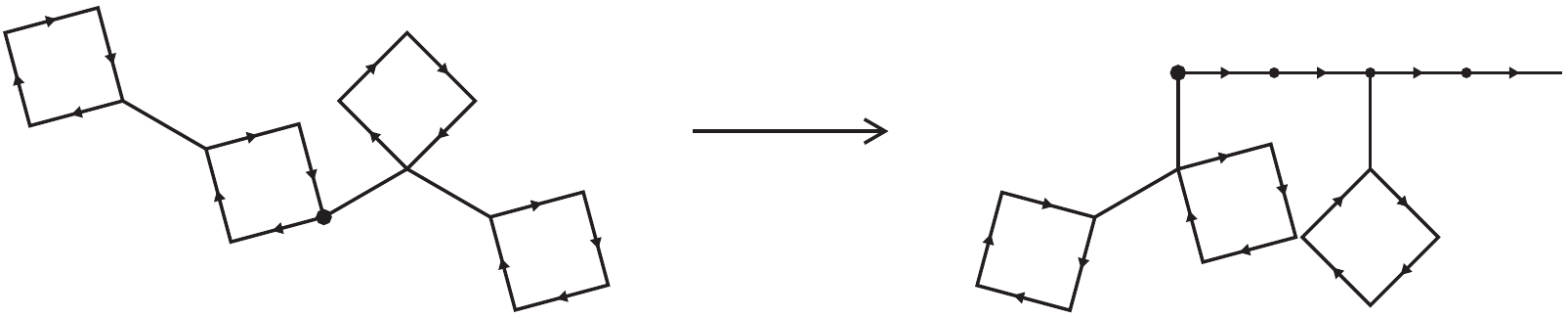}
\end{center}
Hence $z(x)$ satisfies $z(x) = x \exp(z(x)^a).$
	\item  $r \sum \limits_{b=0}^\infty \dfrac{(ab+r)^{b-1}}{b!} x^{ab+r}$ is the exponential generating function for cactus-node trees of type $(r,a,\ldots,a)$ with one of the points in the r-node marked. A cactus-node trees of this type is a sequence $(T_1,\ldots,T_r)$ of cactus-node trees of type $(1,a,\ldots,a)$.
\begin{center}
\includegraphics[scale=0.6]{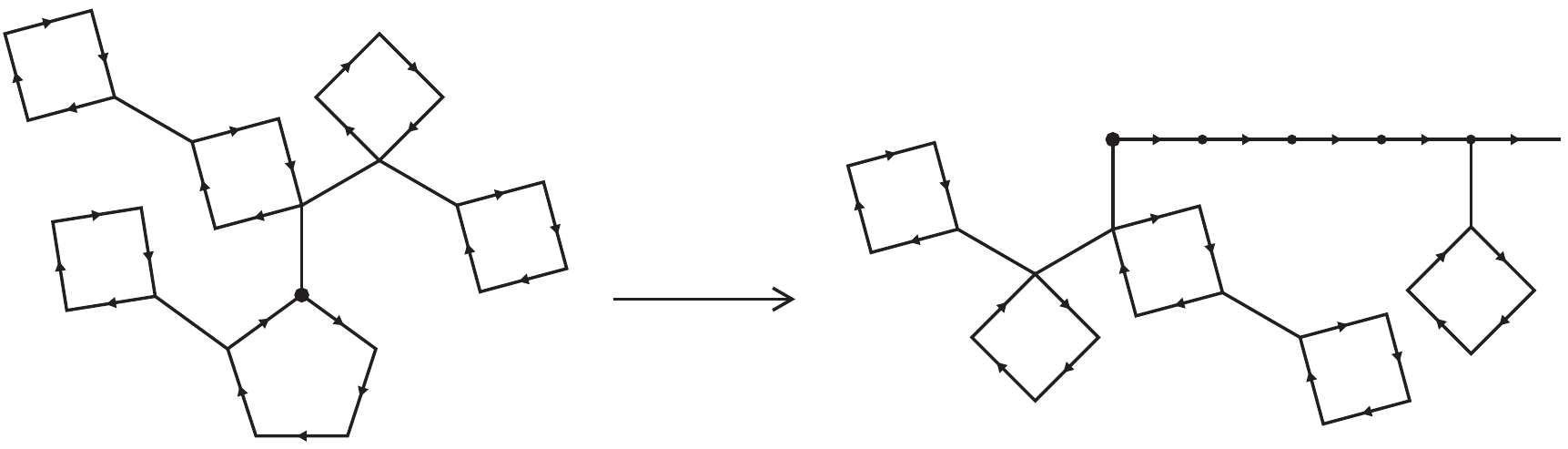}
\end{center} 
Hence,  $z(x)^r=  r \sum \limits_{b=0}^\infty \dfrac{(ab+r)^{b-1}}{b!} x^{ab+r}$.
	\item Finally, we have
\begin{align*}
f_{r,k} (x) &= \sum_{b=0}^\infty \frac{(ab +r)^{b+k}}{b!} x^{ab +r} = \left( x \frac{d}{dx} \right)^{k+1} \sum_{b=0}^\infty \frac{(ab +r)^{b-1}}{b!} x^{ab +r} \\
&= \left( x \frac{d}{dx} \right)^{k+1} \frac{z^r}{r} = \xi_{k}^{(r)}(z(x))  \qedhere
\end{align*}
\end{itemize}
\end{proof}

Define
\begin{equation}  \label{eq:Fagn}
 F^{[a]}_{g,n}(z_1,\ldots,z_n):=H^{[a]}_{g,n}(x_1(z_1),\ldots,x_n(z_n)).
\end{equation}
Applying lemma \ref{exp_gen_lemma} to the orbifold generating function (\ref{orb_gen_func_fs}) we have,
\begin{align*}
H^{[a]}_{g,n}(x_1,\ldots,x_n) =& a^{1-g} \sum_{j_1,\ldots,j_n \in \mathbb{Z}_+} \sum_{\beta \in (\mathbb{Z}a)^n}   a^{\frac{|\beta|}{a} }  \left< \Lambda^U \bar{\tau}_{j_1}\cdots \bar{\tau}_{j_n}\right>_{g,n}^{(\beta)}  \,\,    \prod_{i=1}^n \, \xi^{(\beta_i)}_{j_i} (z_i(x_i)) 
\end{align*}
which gives
\begin{align}
F^{[a]}_{g,n}(z_1,\ldots,z_n) =& a^{1-g} \sum_{j_1,\ldots,j_n \in \mathbb{Z}_+} \sum_{\beta \in (\mathbb{Z}a)^n}   a^{\frac{|\beta|}{a} }  \left< \Lambda^U \bar{\tau}_{j_1}\cdots \bar{\tau}_{j_n}\right>_{g,n}^{(\beta)}  \,\,    \prod_{i=1}^n \, \xi^{(\beta_i)}_{j_i} (z_i) \label{orb_gen_func_xis}
\end{align}
A key consequence is that the generating functions $H^{[a]}_{g,n}(x_1,\ldots,x_n)$ are {\em rational} in $(z_1,\ldots,z_n)$, or more accurately are local expansions around $x_i=0$ of rational functions.  

The function $x = z \exp(-z^a)$ defines local involutions $z\mapsto\sigma_{\alpha}(z)$ near each root $\alpha$ of $dx(\alpha)=0$.  Via a local coordinate $s_{\alpha}$ such that $x=s_{\alpha}^2+x(\alpha)$,   the involution is given by $\sigma_{\alpha}(s_{\alpha})=-s_{\alpha}$.  It gives rise to the following vector space.
\begin{definition} Define the vector space $\ca_x$ to consist of rational functions $p$ satisfying:
\begin{itemize}
\item $p$ has poles only at $\{\alpha: dx(\alpha)=0\}$;
\item $p(z)+p(\sigma_{\alpha}(z))$ is analytic at $z=\alpha$.
\end{itemize}
\end{definition}

\begin{lemma}  \label{th:xinax}
For $k\geq 0$ and $r=1,\ldots,a$, $\xi_{k}^{(r)}(z)\in\ca_x$ and form a basis.
\end{lemma}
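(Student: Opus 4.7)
The plan is to verify $\xi_k^{(r)}\in\mathcal{A}_x$ inductively using the recursion $\xi_{k+1}^{(r)} = x\frac{d}{dx}\xi_k^{(r)}$, to establish linear independence from the explicit power series supplied by Lemma~\ref{exp_gen_lemma}, and to prove the spanning property by a Vandermonde argument on leading principal parts at the branch points.

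For the membership claim I would induct on $k\geq -1$. The base case $\xi_{-1}^{(r)}$ is a polynomial, so it has no finite poles and its sheet-symmetrisation is automatically analytic at each branch point $\alpha$. For the inductive step, writing $x\frac{d}{dx} = \frac{z}{1-az^a}\frac{d}{dz}$ makes clear that the operator can introduce new poles only at the zeros of $1-az^a$, which are precisely the branch points, so condition (i) is preserved. For condition (ii) the key observation is that $x\frac{d}{dx}$ descends from an operator on single-valued functions of $x$, so it commutes with the local involution $z\mapsto\sigma_\alpha(z)$; since $x(\alpha)\neq 0$, it also preserves analyticity in $x$ near $x(\alpha)$. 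The identity
\[
\xi_k^{(r)}(z) + \xi_k^{(r)}(\sigma_\alpha z) = \Bigl(x\tfrac{d}{dx}\Bigr)^{\!k+1}\bigl[\xi_{-1}^{(r)}(z) + \xi_{-1}^{(r)}(\sigma_\alpha z)\bigr]
\]
combined with analyticity in the base case then yields (ii).

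For linear independence I would use the expansion $\xi_k^{(r)}(z(x)) = \sum_{b\geq 0}\frac{(ab+r)^{b+k}}{b!}x^{ab+r}$ from Lemma~\ref{exp_gen_lemma}. Across different $r\in\{1,\ldots,a\}$, these expansions are supported on disjoint arithmetic progressions modulo $a$, so it suffices to verify independence for fixed $r$ and varying $k$; in that case the coefficients $(ab+r)^{b+k}/b!$, indexed by $(b,k)$, form a Vandermonde system in the distinct positive integers $\{ab+r\}_{b\geq 0}$, which yields independence.

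The main obstacle is spanning. The plan is to match the principal parts of an arbitrary $p\in\mathcal{A}_x$ at each branch point by induction on pole order. In the local coordinate $s_\alpha$ with $s_\alpha^2 = x - x(\alpha)$, the operator takes the form $x\frac{d}{dx} = \frac{x}{2s_\alpha}\frac{d}{ds_\alpha}$, and since $x(\alpha)\neq 0$, a direct leading-order calculation starting from the simple poles of $\xi_0^{(r)}=z^r/(1-az^a)$ for $r<a$ (and $\xi_0^{(a)}=az^a/(1-az^a)$) and iterating shows that $\xi_k^{(r)}$ has a pole of exact order $2k+1$ in $s_\alpha$ at every $\alpha$. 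At fixed $k$, the leading coefficients of $\xi_k^{(r)}$ across the $a$ branch points, as $r$ varies over $1,\ldots,a$, are proportional (with a nonzero constant depending on $\alpha$ and $k$) to the distinct powers $\alpha^1,\alpha^2,\ldots,\alpha^a$ at each branch point $\alpha$; the resulting coefficient matrix is Vandermonde in the $a$-th roots of $az^a=1$ and therefore invertible. Consequently, finite linear combinations of $\xi_0^{(r)},\ldots,\xi_k^{(r)}$ realise any prescribed top-order principal part at the $\alpha$ of order $2k+1$; iterating downward in $k$ and using that condition (ii) restricts principal parts to odd negative powers of $s_\alpha$ completes the spanning argument.
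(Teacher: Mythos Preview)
Your argument is correct and follows the same overall architecture as the paper's: induction on $k$ via the operator $x\frac{d}{dx}$ for membership, and a pole-order count for the basis claim.

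There is one organisational difference worth noting. For condition (ii) the paper works directly in the local coordinate $s_\alpha$ and shows that $x\frac{d}{dx}=\frac{s_\alpha^2+x(\alpha)}{2s_\alpha}\frac{d}{ds_\alpha}$ preserves parity of Laurent monomials, hence maps $\mathcal{A}_x$ to itself. You instead symmetrise first and use that the symmetrisation is an analytic function of $x$, so is preserved by $x\frac{d}{dx}$; this is exactly the mechanism recorded in the paper's remark immediately following the lemma. Both routes are valid; the paper's gives the slightly stronger statement that $x\frac{d}{dx}$ preserves all of $\mathcal{A}_x$, which is reused later.

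For the basis claim the paper says only ``a simple dimension argument'', so your explicit Vandermonde computations for independence (via the disjoint residue classes in the $x$-expansions) and for spanning (via the leading principal-part matrix $(\alpha_j^r)$) supply details the paper omits. One small correction: for $r=a$ the leading residue of $\xi_0^{(a)}=\frac{az^a}{1-az^a}$ carries an extra factor of $a$ relative to $\alpha^a$, so the column is $a\alpha^a$ rather than $\alpha^a$; this does not affect invertibility. A second minor point: your spanning argument matches all principal parts but does not address a possible constant remainder, and indeed nonzero constants are not in the span of the $\xi_k^{(r)}$ (all of them vanish at $z=0$). The paper's terse dimension count has the same lacuna; in practice this is harmless, since the lemma is only ever applied to the $F^{[a]}_{g,n}$, which vanish at the origin, and to differentials, where constants disappear.
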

\begin{proof}
The proof uses the simple fact that $x\frac{d}{dx}$ preserves $\ca_x$, i.e. 
\begin{equation}   \label{eq:pres}
x\frac{d}{dx}\ca_x\subset\ca_x.
\end{equation}
This can be seen as follows.  Clearly $x\frac{d}{dx}=\frac{z}{1-az^a}\frac{d}{dz}$ introduces no new poles outside $\{\alpha: dx(\alpha)=0\}$.  So we study its behaviour locally around a single pole $\alpha$.  The principal part of any function in $\ca_x$ is an odd polynomial in $s_{\alpha}^{-1}$ for $s_{\alpha}$ the local coordinate defined above (since $z\mapsto\sigma_{\alpha}(z)$ corresponds to $s_{\alpha}\mapsto -s_{\alpha}$), and
\[ x\frac{d}{dx}=\frac{s_{\alpha}^2+\alpha}{2s_{\alpha}}\frac{d}{ds_{\alpha}}\]
maps odd polynomials in $s_{\alpha}^{-1}$ to odd polynomials in $s_{\alpha}^{-1}$ since it preserves the parity of the power of any monomial in $s_{\alpha}$.  Furthermore, $x\frac{d}{dx}\bc[[s_{\alpha}]]\subset\bc s_{\alpha}^{-1}\oplus\bc[[s_{\alpha}]]\subset\ca_x$.  Hence \eqref{eq:pres} is proven.

Now $\xi_{-1}^{(r)}(z)=z^r/r\in\ca_x$ (or $z^a$ for $r=a$) since it is analytic at $z=\alpha$.  Since $\xi_{k}^{(r)}(z)=x\frac{d}{dx}\xi_{k-1}^{(r)}(z)$ and $x\frac{d}{dx}$ preserves $\ca_x$ ,by induction $\xi_{k}^{(r)}(z)\in\ca_x$ for all $r$ and $k$.

A simple dimension argument proves that the $\xi_{k}^{(r)}(z)$ form a basis.
\end{proof}
{\em Remark.}  If $f(z)$ is analytic at $z=\alpha$ and satisfies $f(z)=f(\sigma_{\alpha}(z))$ then $x\frac{d}{dx}f(z)$ is analytic at $z=\alpha$.  In the terminology of the proof of Lemma~\ref{th:xinax}, the local expansion of $f(z)$ lies in $\bc[[s_{\alpha}^2]]$ and $x\frac{d}{dx}\bc[[s_{\alpha}^2]]\subset \bc[[s_{\alpha}^2]]$.

Since each $\xi_k^{(r)}(z)\in\ca_x$ we have proven:

\begin{corollary}  \label{th:fagnsym}
For $2g-2+n>0$ and all $a$, $F^{[a]}_{g,n}(z_1,\ldots,z_n)\in\ca_{x_i}$ for each $i=1,\ldots,n$.
\end{corollary}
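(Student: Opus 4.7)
The corollary is essentially a direct consequence of formula~\eqref{orb_gen_func_xis} together with Lemma~\ref{th:xinax}. My plan is to carry it out in three short steps: (1) reduce \eqref{orb_gen_func_xis} to a finite sum, (2) isolate the $z_i$-dependence, and (3) invoke Lemma~\ref{th:xinax} and the vector space structure of $\ca_{x_i}$.

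First, I would observe that for fixed $(g,n)$ with $2g-2+n>0$, the nominally infinite sum in \eqref{orb_gen_func_xis} truncates to a finite one. The Hurwitz--Hodge integral $\left< \Lambda^U \bar{\tau}_{j_1} \cdots \bar{\tau}_{j_n}\right>_{g,n}^{(\beta)}$ vanishes unless the total degree of the integrand matches $\dim \overline{\mathcal M}_{g,[-\beta]}(\mathcal B \mathbb Z_a) = 3g-3+n$; since each $\bar{\psi}_i^{j_i}$ contributes degree $j_i$ and the generalised Hodge classes $\lambda_i^U$ have bounded degree, this forces $j_1+\cdots+j_n \le 3g-3+n$. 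Combined with the fact that $\beta \in (\mathbb Z_a)^n$ already runs over a finite set, we see that only finitely many terms contribute to \eqref{orb_gen_func_xis}.

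Second, I would fix an index $i$ and view the remaining variables $\{z_k : k \neq i\}$ as fixed parameters. Grouping terms by the $i$-th tensor factor, $F^{[a]}_{g,n}(z_1,\ldots,z_n)$ becomes a finite linear combination
\[
F^{[a]}_{g,n}(z_1,\ldots,z_n) = \sum_{r,k} c_{r,k}(z_1,\ldots,\widehat{z_i},\ldots,z_n)\,\xi_k^{(r)}(z_i),
\]
where $\widehat{z_i}$ indicates omission and the coefficients $c_{r,k}$ are independent of $z_i$.

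Third, by Lemma~\ref{th:xinax}, each $\xi_k^{(r)}(z_i) \in \ca_{x_i}$. The conditions defining $\ca_{x_i}$ --- rationality, having poles only at the critical points of $x$, and analyticity of $p(z)+p(\sigma_\alpha(z))$ at each critical point $\alpha$ --- are all linear in the function, so $\ca_{x_i}$ is closed under linear combinations with coefficients that do not depend on $z_i$. The combination above therefore lies in $\ca_{x_i}$, as required. The whole argument is routine given the machinery already established; the only point requiring genuine care is the finiteness reduction in the first step, which ensures we are handling honest rational functions rather than merely formal power series.
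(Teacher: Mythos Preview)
Your proposal is correct and follows essentially the same approach as the paper: the paper's proof of the corollary is the single sentence ``Since each $\xi_k^{(r)}(z)\in\ca_x$ we have proven,'' relying on the expansion~\eqref{orb_gen_func_xis} and Lemma~\ref{th:xinax}. Your three steps simply unpack this, with the added care of spelling out the dimension-count finiteness (which the paper leaves implicit in its remark that the generating functions are rational) and the linearity of $\ca_{x_i}$.
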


\section{Eynard--Orantin invariants}  \label{sec:EO}

Consider a triple $(C,x,y)$ consisting of a genus 0 Riemann surface $C$ and meromorphic functions  $x,y:C\to\bc$ with the property that the zeros of $dx$ are simple and disjoint from the zeros of $dy$.   For every $(g,n)\in\bz^2$ with $g\geq 0$ and $n>0$ the Eynard--Orantin invariant of   $(C,x,y)$ is  a multidifferential $\omega^g_n(p_1,\ldots,p_n)$, i.e. a tensor product of meromorphic 1-forms on the product $C^n$, where $p_i\in C$.  (More generally, if $C$ has positive genus it should come with  a {\em Torelli marking} which is a choice of symplectic basis $\{a_i, b_i\}_{i=1,\ldots,g}$ of the first homology group $H_1(\bar{C})$ of the compact closure $\bar{C}$ of $C$.  In particular, a genus 0 surface $C$ requires no Torelli marking.)  When $2g-2+n>0$, $\omega^g_n(p_1,\ldots,p_n)$ is defined recursively in terms of local  information around the poles of $\omega^{g'}_{n'}(p_1,\ldots,p_n)$ for $2g'+2-n'<2g-2+n$.  Equivalently, the $\omega^{g'}_{n'}(p_1,\ldots,p_n)$ are used as kernels on the Riemann surface.   

Since each zero $\alpha$ of $dx$ is simple, for any point $p\in C$ close to $\alpha$ there is a unique point $\hat{p}\neq p$ close to $\alpha$ such that $x(\hat{p})=x(p)$.  The recursive definition of $\omega^g_n(p_1,\ldots,p_n)$ uses only local information around zeros of $dx$ and makes use of the well-defined map $p\mapsto\hat{p}$ there. The invariants are defined as follows.  Given a rational coordinate $z$ on $C$
\begin{align*}
\omega^0_1&=-\frac{y(z)dx(z)}{x(z)}\nonumber\\
\omega^0_2&=\frac{dz_1 \otimes dz_2}{(z_1-z_2)^2}
\end{align*}
For $2g-2+n>0$,
\begin{equation}  \label{eq:EOrec}
\omega^g_{n}(z_1,z_{S'})=\sum_{\alpha}\res_{z=\alpha}K(z_1,z) \biggr[\omega^{g-1}_{n+1}(z,\hat{z},z_{S'})+ \mathop{\sum_{g_1+g_2=g}}_{I\sqcup J=S}
\omega^{g_1}_{|I|+1}(z,z_I)\omega^{g_2}_{|J|+1}(\hat{z},z_J)\biggr]
\end{equation}
where the sum is over the zeros $\alpha$ of $dx$, $S'=\{2,\ldots,n\}$, $(g_1,|I|)\neq (0,0)\neq(g_2,|J|)$ and 
\[ K(z_1,z)=\frac{-\int^z_{\hat{z}}\omega_2^0(z_1,z')x(z)}{2(y(z)-y(\hat{z}))dx(z)}=\frac{x(z)}{2(y(\hat{z})-y(z))x'(z)}\left(  \frac{1}{z-z_1}- \frac{1}{\hat{z}-z_1}\right)\frac{dz_1}{dz}\] 
is well-defined in the vicinity of each zero of $dx$.   Note that the quotient of a differential by the differential $dx(z)$ is a meromorphic function.  The recursion \eqref{eq:EOrec} depends only on the meromorphic differential $ydx/x$ and the map $p\mapsto\hat{p}$ around zeros of $dx$.  For $2g-2+n>0$, each $\omega^g_n$ is a symmetric multidifferential with poles only at the zeros of $dx$, of order $6g-4+2n$, and zero residues.

Define $\Phi(z)$ by $d\Phi(z)=y(z)dx(z)/x(z)$.  For $2g-2+n>0$, the invariants satisfy the dilaton equation~\cite{EOrInv}:
\[
\sum_{\alpha}\res_{z=\alpha}\Phi(z)\omega^g_{n+1}(z,z_1,...,z_n)=(2-2g-n)\omega^g_n(z_1,...,z_n)
\] 
where the sum is over the zeros $\alpha$ of $dx$, $\Phi(z)=\int^z ydx(z')$ is an arbitrary antiderivative and $z_S=(z_1,\dots,z_n)$.  This enables the definition of the so-called {\em symplectic invariants}
\[ F_g=\sum_{\alpha}\res_{z=\alpha}\Phi(z)\omega^g_{1}(z).\]

{\em Remark.}  There are variations on the definition of the Eynard--Orantin invariants determined by how $x$ and $y$ appear in the kernel $K$.  Here we have used $dx/x$ and $y$ to define the kernel $K$ but any of the four combinations of $dx$ or $dx/x$ and $dy$  or $dy/y$ can be used.  All are equivalent via changes of coordinates $u=\log{x}$ and $v=\log{y}$, but in order to have $x$ appear algebraically in generating functions, the choice here suits best.

\subsection{Principal parts}

We will see below that the Eynard--Orantin recursion \eqref{eq:EOrec} which is given as a sum
\[ \omega^g_{n}(z_1,z_2,...,z_n)=\sum_{\alpha}\res_{z=\alpha}K(z_1,z)\cf(z,z_2,...,z_n)\]
over $\{\alpha:dx(\alpha)=0\}$ expresses $\omega^g_n(z_1,...,z_n)$ as the sum of its principal parts in $z_1$ at its poles $z_1=\alpha$.  This is an important feature so we explain it below after first recalling the definition and properties of principal parts.

Given a local parameter $z$ of a curve $C$, the {\em principal part} at a point $\alpha\in C$ of a function or differential $h(z)$ analytic in $U \setminus \{\alpha\}$ for some neighbourhood $U$ of $\alpha$ is 
\begin{equation}   \label{eq:PP} 
[h(z)]_{\alpha}:=\res_{w=\alpha}\frac{h(w)dw}{z-w}.
\end{equation}
(Strictly we might write $z=z(p)$ and $w=w(q)$ for points $p$ and $q$ on $C$ but we abuse terminology and identify $U$ with $z(U)$.)  It satisfies the properties:
\begin{enumerate}[(i)]
\item $[h(z)]_{\alpha}$ is analytic on $U \setminus \{\alpha\}$;

\item $h(z)-[h(z)]_{\alpha}$ is analytic on $U$.
\end{enumerate}

Thus $[h(z)]_{\alpha}$ is given by the negative part of the Laurent series of $h(z)$ at $\alpha$.

To see (i), given $z\in U \setminus \{\alpha\}$, choose a contour $\gamma_1$ around $\alpha$ not containing $z$ to calculate the residue, as in Figure~\ref{fig:contour}. 
\begin{center} 
\includegraphics[scale=0.2]{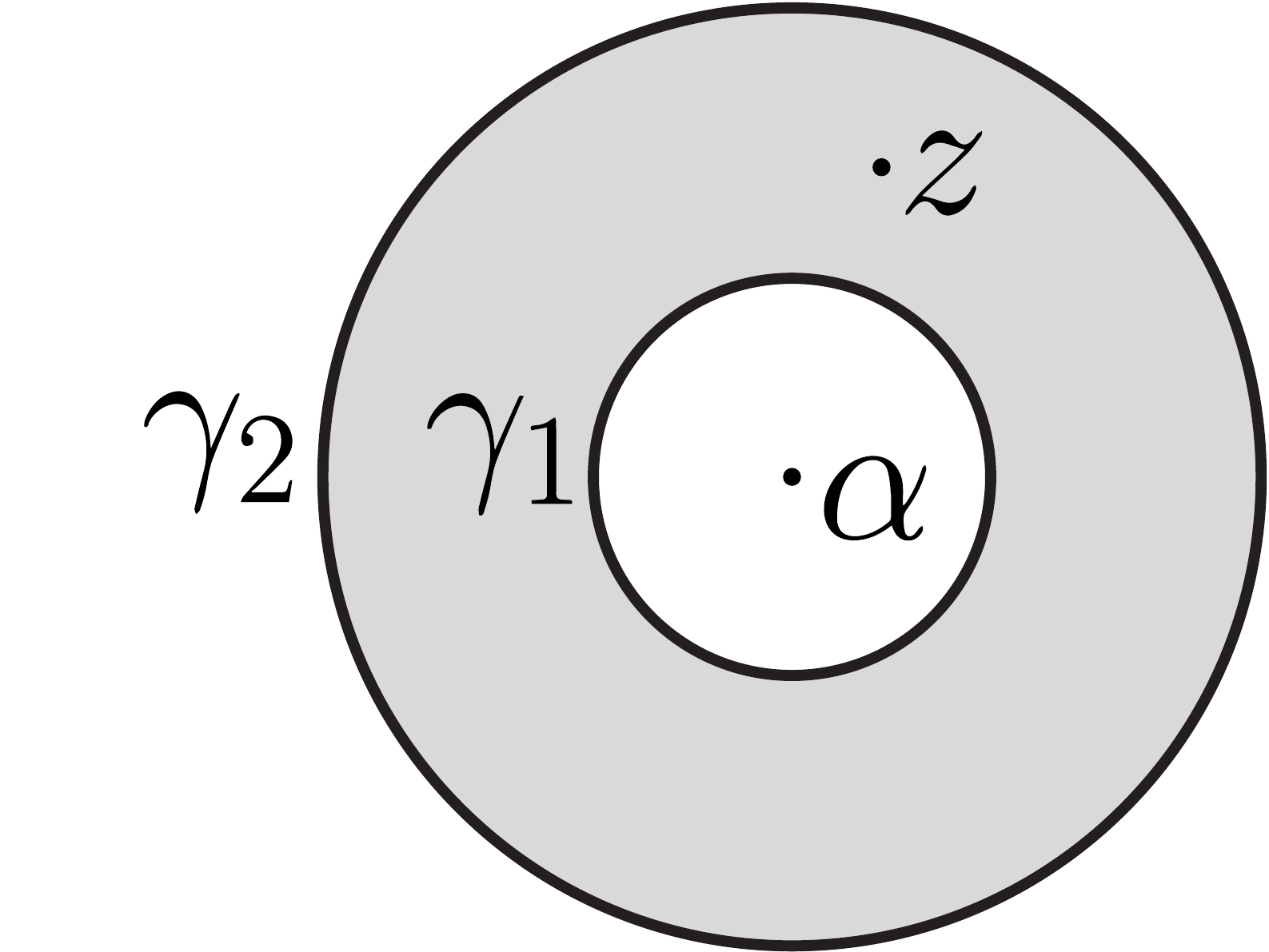}\hspace{1.1cm}\mbox{}\\
Figure \refstepcounter{figure}\arabic{figure}. \label{fig:contour}
\end{center}
Now simply differentiate under the integral sign to prove analyticity at $z$.  To see (ii), Figure~\ref{fig:contour} gives
\[ h(z)=-\res_{w=z}\frac{h(w)dw}{z-w}=\frac{1}{2\pi i}\int_{\gamma_1-\gamma_2}\frac{h(w)dw}{z-w}=[h(z)]_{\alpha}-\frac{1}{2\pi i}\int_{\gamma_2}\frac{h(w)dw}{z-w}\]
and the integral around $\gamma_2$ is analytic in $z\in U$ again since we can differentiate under the integral sign.

Suppose $C$ is a rational curve and $z$ is a rational parameter as will be true in our applications.  If $h(z)$ has a pole at $\alpha$ then $[h(z)]_{\alpha}$ is a polynomial in $1/(z-\alpha)$ (or $z$ when $\alpha=\infty$.)   Up to a constant, any rational function is the sum of its principal parts commonly known as its partial fraction decomposition.  Any rational differential is equal to the sum of its principal parts (with no constant ambiguity.)   

The principal part of a function of several variables or multidifferential $h(z_1,\ldots,z_n)$ at the point $z_1=\alpha$ is defined via \eqref{eq:PP} as if the $z_j$, $j>1$ are constants.   It is denoted $[h(z_1,\ldots,z_n)]_{z_1=\alpha}$ or $[h(z_1,\ldots,z_n)]_{\alpha}$ when $z_1$ is understood.  

For $h(z)$ analytic in $U \setminus \{b\}$
\[ \left[\frac{h(z_1)}{z_1-z_2}\right]_{z_1=\alpha}=\res_{w=\alpha}\frac{h(w)dw}{(w-z_2)(z_1-w)}\]
and we choose the contour containing $z_1$ and not $z_2$ as in Figure~\ref{fig:contour2},
\begin{center}  
\includegraphics[scale=0.2]{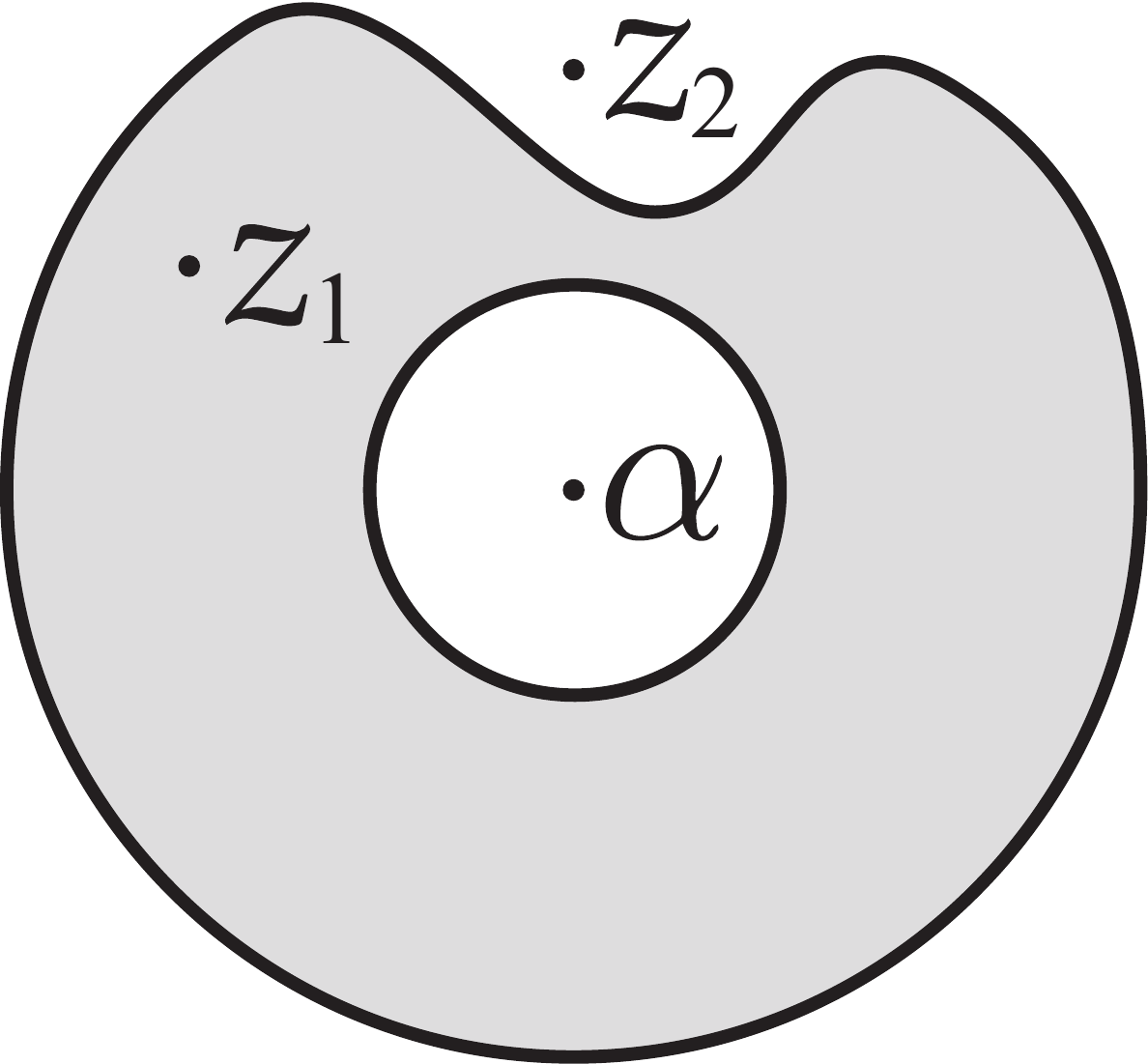}\\
Figure \refstepcounter{figure}\arabic{figure}. \label{fig:contour2}
\end{center}
to obtain the properties:
\begin{enumerate}[(i)]
\item $[h(z_1)/(z_1-z_2)]_{\alpha}$ is analytic in $z_1$ and $z_2$ on $U \setminus \{\alpha\}$;

\item $h(z_1)/(z_1-z_2)-[h(z_1)/(z_1-z_2)]_{\alpha}$ is analytic in $z_1$ on $U$.
\end{enumerate}

Note that in (ii) we allow $z_1$ to take the value $z_1=\alpha$ whereas we not allow $z_2$ to take the value $z_2=\alpha$.  Note also that:
\begin{equation}  \label{eq:princ}
\left[\frac{h(z_1)}{z_1-z_2}\right]_{z_1=\alpha}=\left[\frac{h(z_1)-h(z_2)}{z_1-z_2}\right]_{z_1=\alpha}=\left[\frac{h(z_2)}{z_2-z_1}\right]_{z_2=\alpha}.
\end{equation}
In particular, if $h(z)$ is analytic at $\alpha$ then $\displaystyle\left[\frac{h(z_1)}{z_1-z_2}\right]_{\alpha}=0$.

The principal part of a function with respect to more than one variable depends on the order and so is slightly subtle.   For example,
\[ \Bigg[\left[\frac{1}{z_1z_2}\right]_{z_1=0}\Bigg]_{z_2=0}=\Bigg[\frac{1}{z_1z_2}\Bigg]_{z_2=0}=\frac{1}{z_1z_2}\]
so we see that principal parts at 0 with respect to $z_1$ and $z_2$ commute on $1/z_1z_2$ and more generally for any product $h_1(z_1)h_2(z_2)$, whereas
\[\Bigg[\left[\frac{1}{z_1(z_1-z_2)}\right]_{z_1=0}\Bigg]_{z_2=0}=\Bigg[-\frac{1}{z_1z_2}\Bigg]_{z_2=0}=-\frac{1}{z_1z_2},\quad\quad
\Bigg[\left[\frac{1}{z_1(z_1-z_2)}\right]_{z_2=0}\Bigg]_{z_1=0}=\Big[0\Big]_{z_1=0}=0\]
so they do not commute on $1/z_1(z_1-z_2)$.   In this paper, whenever we have a function of several variables we only take the principal part with respect to one of the variables so the subtlety described here never arises.

\subsection{Principal parts of Eynard--Orantin invariants}
An important property of the stable Eynard--Orantin invariants is that they are meromorphic multidifferentials with poles at the zeros of $dx$.  In particular, on a rational curve they are rational multidifferentials and hence equal to the sum of their principal parts.  

Each summand at a zero $\alpha$ of $dx$ in the RHS of the defining recursion \eqref{eq:EOrec} for $\omega^g_{n}(z_1,\ldots,z_n)$ has dependence on $z_1$ only occurring as $1/(z-z_1)$ and $1/(\hat{z}-z_1)$.  Express \eqref{eq:EOrec} as $\omega^g_n=\sum_{\alpha} I_{\alpha}$ then:
\begin{align*}  
I_{\alpha}&=\res_{z=\alpha}\left(  \frac{dz_1}{z-z_1}- \frac{dz_1}{\hat{z}-z_1}\right)\frac{x(z)}{2(y(\hat{z})-y(z))dx(z)} \biggr[\omega^{g-1}_{n+1}(z,\hat{z},z_{S'})+ \mathop{\sum_{g_1+g_2=g}}_{I\sqcup J=S'} \omega^{g_1}_{|I|+1}(z,z_I)\omega^{g_2}_{|J|+1}(\hat{z},z_J)\biggr]
\end{align*}

One can differentiate $I_{\alpha}$ with respect to $z_1$ under the integral sign showing that it is analytic everywhere except possibly at $z_1=\alpha$.  Note that $I_{\alpha}$ is analytic at $z_1=\infty$ (assuming $\alpha\neq\infty$) since 
\[ K\sim \frac{(\hat{z}-z)x(z)}{2(y(\hat{z})-y(z))x'(z)}\frac{1}{dz}\frac{dz_1}{z_1^2}\]
so for some $C$ constant in $z_1$, $\displaystyle I_{\alpha}\sim \frac{Cdz_1}{z_1^2}$ which is analytic at $z_1=\infty$.  Thus $I_{\alpha}$ is rational and equal to its principal part at $z_1=\alpha$.  But then $I_{\alpha}$ is the principal part of $\omega^g_{n}(z_1,\ldots,z_n)$ at $z_1=\alpha$.

Furthermore, we can calculate $I_{\alpha}$ since
\begin{align*}  
I_{\alpha}&=\res_{z=\alpha}\left(  \frac{\eta(z)}{z-z_1}- \frac{\eta(\hat{z})}{\hat{z}-z_1}\right)dz_1\\
&=2\res_{z=\alpha} \frac{\eta(z)}{z-z_1}dz_1\\
&=-2[\eta(z_1)]_{\alpha}\\
&=-\bigg[\frac{x(z_1)}{(y(\hat{z}_1)-y(z_1))dx(z_1)} \Big(\omega^{g-1}_{n+1}(z_1,\hat{z}_1,z_{S'}) + \mathop{\sum_{g_1+g_2=g}}_{I\sqcup J=S'} \omega^{g_1}_{|I|+1}(z_1,z_I)\omega^{g_2}_{|J|+1}(\hat{z}_1,z_J)\Big)\bigg]_{\alpha}
\end{align*}
where $\eta(z)$ is a differential form that satisfies $\eta(z)=-\eta(\hat{z})$ (since $x(z)=x(\hat{z})$ and $\omega^{g-1}_{n+1}$ is symmetric in its arguments.)  Notice that $I_{\alpha}(z_1)=-I_{\alpha}(\hat{z}_1)$.

In summary, we have proven:
\begin{proposition}  \label{th:EOprinc}
The recursion (\ref{eq:EOrec}) expresses any Eynard-Orantin invariant as the sum of its principal parts:
\begin{equation}  \label{eq:EOprinc}
\omega^g_n(z_1,z_{S'})=-\sum_{\alpha} \bigg[\frac{x(z_1)}{(y(\hat{z}_1)-y(z_1))dx(z_1)} \Big(\omega^{g-1}_{n+1}(z_1,\hat{z}_1,z_{S'}) + \mathop{\sum_{g_1+g_2=g}}_{I\sqcup J=S'} \omega^{g_1}_{|I|+1}(z_1,z_I)\omega^{g_2}_{|J|+1}(\hat{z}_1,z_J)\Big)\bigg]_{\alpha}
\end{equation}
where the sum is over the zeros $\alpha$ of $dx$, $S'=\{2,\ldots,n\}$ and $(g_1,|I|)\neq (0,0)\neq(g_2,|J|)$.
\end{proposition}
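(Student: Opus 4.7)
The plan is to make precise the heuristic given in the paragraphs preceding the statement, by showing that each summand $I_\alpha$ appearing in the residue expression (\ref{eq:EOrec}) is rational in $z_1$, has poles only at $z_1=\alpha$, and in fact coincides with the principal part at $\alpha$ of the bracketed expression.

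First I would write the recursion in the form $\omega^g_n = \sum_\alpha I_\alpha$ and isolate the $z_1$-dependence of the kernel, which enters only through the factor
\[
\frac{dz_1}{z - z_1} - \frac{dz_1}{\hat z - z_1}.
\]
Setting
\[
\eta(z) \;=\; \frac{x(z)}{2(y(\hat z)-y(z))\,dx(z)}\,\Big(\omega^{g-1}_{n+1}(z,\hat z,z_{S'}) + \mathop{{\sum}'}_{g_1+g_2=g,\; I\sqcup J=S'}\omega^{g_1}_{|I|+1}(z,z_I)\,\omega^{g_2}_{|J|+1}(\hat z,z_J)\Big),
\]
one has $\eta(\hat z) = -\eta(z)$: the involution $z \mapsto \hat z$ fixes $x(z)$ and $dx(z)$, swaps $y(z)$ and $y(\hat z)$ (changing the sign of the denominator), and leaves the symmetric products $\omega^{g'}_{n'}$ invariant under the swap of their first two arguments. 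Consequently
\[
I_\alpha(z_1) \;=\; \res_{z=\alpha}\!\left(\frac{\eta(z)}{z-z_1} - \frac{\eta(\hat z)}{\hat z - z_1}\right)\!dz_1 \;=\; 2\,\res_{z=\alpha}\frac{\eta(z)\,dz_1}{z-z_1},
\]
where the second equality uses the change of variable $z \leftrightarrow \hat z$ combined with $\eta(\hat z) = -\eta(z)$, noting that the involution preserves any small contour around $\alpha$.

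Next I would argue that $I_\alpha$ is rational and has its only pole in $z_1$ at $\alpha$. Differentiating under the integral sign shows that $I_\alpha$ is holomorphic in $z_1$ away from $\alpha$; and expanding the kernel for large $|z_1|$ yields $I_\alpha \sim C\,dz_1/z_1^2$ with $C$ a residue independent of $z_1$, so $I_\alpha$ is regular at $z_1=\infty$. A rational differential with pole only at $\alpha$ is equal to its principal part at $\alpha$, which by the very definition (\ref{eq:PP}) gives
\[
I_\alpha \;=\; 2\,\res_{z=\alpha}\frac{\eta(z)\,dz_1}{z - z_1} \;=\; -2\,[\eta(z_1)]_{\alpha},
\]
the factor of $2$ cancelling the $2$ in the denominator of $\eta$. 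Plugging this back into $\omega^g_n = \sum_\alpha I_\alpha$ yields exactly the formula (\ref{eq:EOprinc}).

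Finally, since $\omega^g_n$ itself is a rational multidifferential in $z_1$ with poles only at the zeros of $dx$, the identity $\omega^g_n = \sum_\alpha [\omega^g_n]_\alpha$ is automatic, and the above computation identifies each principal part with the corresponding summand. The only subtle step is the sign manipulation using $\eta(\hat z) = -\eta(z)$; I expect this symmetry verification, together with a careful check that the antiderivative ambiguity in the kernel (which would contribute an exact form) does not obstruct the identification, to be the one point requiring the most care. Everything else is an application of the principal-part formalism developed in Section~4.1.
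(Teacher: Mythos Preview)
Your proposal is correct and follows essentially the same route as the paper: write $\omega^g_n=\sum_\alpha I_\alpha$, use the antisymmetry $\eta(\hat z)=-\eta(z)$ to collapse the two kernel terms, show each $I_\alpha$ is rational in $z_1$ with its only pole at $\alpha$ (by differentiating under the integral and checking behaviour at $\infty$), and identify $I_\alpha=-2[\eta(z_1)]_\alpha$ via the principal-part formula~(\ref{eq:PP}). The only cosmetic difference is that the paper establishes rationality first and the symmetry simplification second; your aside about an ``antiderivative ambiguity'' in the kernel is unnecessary here, since on the rational curve the kernel is given explicitly and no ambiguity arises.
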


The principal parts of the stable $\omega^g_n$ in the right hand side of (\ref{eq:EOprinc}) are straight-forward because their pole structure is rather simple.  The terms involving $\omega^0_2$ are less straight-forward so later we will require the following:
\begin{lemma}  \label{th:02inv}
\[\left[\left(\frac{dz_1}{z_1-z_2}+\frac{d\hat{z}_1}{\hat{z}_1-z_2}\right)\frac{x_1}{dx_1}\right]_{\alpha}=\left[\frac{x_1}{x_1-x_2}\right]_{\alpha}\]
\end{lemma}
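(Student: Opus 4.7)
The plan is to verify both sides of the claimed identity equal zero, since each expression inside the bracket extends analytically to $z_1 = \alpha$; the principal part at $\alpha$ of a function analytic there vanishes by definition.

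For the right-hand side, $\frac{x_1}{x_1-x_2}$ is the quotient of $x(z_1)$ by $x(z_1)-x(z_2)$, both analytic in $z_1$, and the denominator evaluates at $z_1 = \alpha$ to $x(\alpha) - x(z_2) \neq 0$ for generic $z_2$. Hence the quotient is analytic at $\alpha$ and its principal part vanishes.

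For the left-hand side, I will first use the chain rule $d\hat z_1 = (x'(z_1)/x'(\hat z_1))\, dz_1$ to rewrite the expression as
\[
\frac{x(z_1)}{x'(z_1)(z_1-z_2)} + \frac{x(z_1)}{x'(\hat z_1)(\hat z_1 - z_2)}.
\]
Each summand individually has a simple pole at $z_1 = \alpha$, coming respectively from the zero of $x'(z_1)$ at $z_1=\alpha$ and the zero of $x'(\hat z_1)$ at $\hat z_1=\alpha$, which also occurs at $z_1=\alpha$ since $\alpha$ is a fixed point of the involution. The cleanest way to see these poles cancel is an involution argument: the displayed sum is manifestly invariant under $z_1 \leftrightarrow \hat z_1$ (using $x(\hat z_1) = x(z_1)$ and $\hat{\hat z}_1 = z_1$), so in the local parameter $s$ at $\alpha$ with $x = s^2 + x(\alpha)$, for which the involution is $s \mapsto -s$, the sum is an even function of $s$, hence a single-valued analytic function of $s^2 = x_1 - x(\alpha)$, and therefore regular at $s = 0$. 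Alternatively, one can check directly using $\hat z_1 - \alpha = -(z_1 - \alpha) + O((z_1-\alpha)^2)$ and $x'(\hat z_1) \sim -x''(\alpha)(z_1-\alpha)$ that the two residues at $\alpha$ are equal and opposite, both of magnitude $\frac{x(\alpha)}{x''(\alpha)(\alpha-z_2)}$, and so they cancel.

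The only mild obstacle is bookkeeping: tracking the sign introduced by the involution when rewriting $d\hat z_1$ and $x'(\hat z_1)$ near $\alpha$. Once that is set up correctly the lemma follows, with no recursion or spectral-curve-specific input needed beyond the simplicity of the zero of $dx$ at $\alpha$.
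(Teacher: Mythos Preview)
Your proof is correct and takes a more direct route than the paper's. You observe that, for generic $z_2$, both sides are individually analytic at $z_1=\alpha$, so both principal parts vanish trivially. The parity argument for the left-hand side is sound: each summand has at most a simple pole in the local parameter $s$, and the sum is even under $s\mapsto -s$, so it is regular (the phrase ``even, hence a function of $s^2$, therefore regular'' is a little loose on its own, but you have already supplied the simple-pole bound that makes it work, and your residue check confirms it). The paper proceeds differently: it factorises $x(z_1)-x(z_2)=(z_1-z_2)(\hat z_1-z_2)\,h(z_1,z_2)$ locally with $h$ analytic, nonvanishing, and involution-invariant at $z_1=\alpha$, and recognises the \emph{difference} of the two sides as $x_1\frac{d}{dx_1}\log h$, which is therefore analytic at $z_1=\alpha$. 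That route yields the explicit identity~\eqref{eq:02anal}, which the paper then differentiates in $z_2$ to obtain the companion statement~\eqref{eq:princ02}; your parity argument would establish~\eqref{eq:princ02} directly just as easily, so nothing is lost for the downstream applications.
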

\begin{proof}  
Note that we express the terms on the left hand side as quotients of differentials instead of functions for later ease.

Locally 
\begin{equation}  \label{eq:locfac}
x(z_1)-x(z_2)=(z_1-z_2)(\hat{z}_1-z_2)h(z_1,z_2)
\end{equation}
where $h(z_1,z_2)$ is analytic and non-zero at $z_1=\alpha$ and $h(z_1,z_2)=h(\hat{z}_1,z_2)$.  By the remark after Lemma~\ref{th:xinax},  since $\log h(z_1,z_2)$ is analytic at $z_1=\alpha$ and invariant under $z_1\mapsto\hat{z}_1$, then $x_1\frac{d}{dx_1}\log h(z_1,z_2)$ is analytic at $z_1=\alpha$ (where as usual $x_1=x(z_1)$.)  Hence the right hand side of
\begin{equation}  \label{eq:02anal} 
x_1\frac{d}{dx_1}\log h(z_1,z_2)=\frac{x_1}{x_1-x_2}-\left(\frac{dz_1}{z_1-z_2}+\frac{d\hat{z}_1}{\hat{z}_1-z_2}\right)\frac{x_1}{dx_1}
\end{equation}
is analytic at $z_1=\alpha$ and the lemma follows.
\end{proof}
Take the exterior of \eqref{eq:02anal} with respect to the second variable to get:
\begin{equation}  \label{eq:princ02}
\left[\left(\frac{dz_1dz_2}{(z_1-z_2)^2}+\frac{d\hat{z}_1dz_2}{(\hat{z}_1-z_2)^2}\right)\frac{x_1}{dx_1}\right]_{\alpha}=\left[\frac{x_1dx_2}{(x_1-x_2)^2}\right]_{\alpha}.
\end{equation}
and hence 
\[ \frac{dx_1dx_2}{(x_1-x_2)^2}-\frac{dz_1dz_2}{(z_1-z_2)^2}-\frac{d\hat{z}_1dz_2}{(\hat{z}_1-z_2)^2}\]
is analytic at $z_1=\alpha$ and vanishes there for all $z_2$ (in particular, for $z_2=\alpha$.)\\
\\
Notice that $x$ appears in the definition only via $dx/x$ which is rational for the spectral curve of interest in this paper
\[ x=z \exp(-z^a),\quad y=z^a.\]
Hence the kernel $K$ is also rational:
\[ K(z_1,z)=\frac{z}{2(\hat{z}^a-z^a)(1-az^a)}\left(  \frac{1}{z-z_1}- \frac{1}{\hat{z}-z_1}\right)\frac{dz_1}{dz}.\]

By the construction of Eynard--Orantin invariants as a sum of their principal parts one easily sees that $\omega^g_n$ has invariance properties under the local involutions $\sigma_{\alpha}(z)$ defined near each zero $\alpha$ of $dx$.  In the language of Section~\ref{sec:genfun} for $x=z \exp(-z^a)$, we see that
for $2g-2+n>0$, $\omega^g_n(z_1,\ldots,z_n)\in\ca_{x_i}$ for each $i=1,\ldots,n$.

{\em Remark.}  In~\cite{EynInt,EynInv} Eynard chooses a basis $d\xi_{\alpha,m}$ of $\ca_x$, linearly related to the basis $d\xi_{k}^{(r)}(z)$ in Section~\ref{sec:genfun}, and identifies the coefficients in terms of intersection numbers over a moduli space of $a$-coloured Riemann surfaces, $\overline{\modm}^a_{g,n}$
\[\omega^g_n(z_1,\ldots,z_n)=\sum_{i_1,\ldots,i_n}\sum_{d_1,\ldots,d_n}A^{(g)}_n(i_1,d_1;\ldots;i_n,d_n)\prod d\xi_{\alpha_{i_k},d_k}(z_k)\]
where $\alpha_1,\ldots,\alpha_a$ are the zeros of $dx$.  
Essentially Eynard showed that the Eynard--Orantin invariants give cohomological field theories.  This was made more precise in~\cite{DOSSIde}.
Eynard described his result as a generalised ELSV formula.  It is intriguing that the ELSV-type formula in this paper transforms linearly to Eynard's formula and hence we see a relationship between intersection numbers over $\overline{\mathcal{M}}_{g,\gamma}(\mathcal{B}\mathbb{Z}_a)$ and  intersection numbers over  $\overline{\modm}^a_{g,n}$.

\section{Proof of Main Theorem} \label{sec:proof}

In this section we prove that the Eynard--Orantin invariants of the spectral curve
\[
x=z \exp(-z^a),\quad y=z^a
\]
coincide with (the total derivatives of) the orbifold Hurwitz number generating functions.  The strategy of proof is quite natural.  Since the Eynard--Orantin recursion expresses the invariants as a sum over the principal parts in the first variable then we will analyse the principal parts of the partial differential equation \eqref{eq:genrec1}. Furthermore, the principal parts of the Eynard--Orantin invariants are antisymmetric with respect to the local involutions at each zero of $dx$, so we take the anti-symmetric part of the principal part of \eqref{eq:genrec1}.  (In fact we take the symmetric part of the principal part of \eqref{eq:genrec1} since it contains an extra anti-symmetric factor.)  This is exactly the strategy of proof used in~\cite{EMSLap}.

\begin{proof}[Proof of Theorem~\ref{th:main}]
Recall from Section~\ref{sec:ELSV} that
\[
F^{[a]}_{g,n}(z_1,\ldots,z_n):=H^{[a]}_{g,n}(x(z_1),\ldots,x(z_n))
\]
is a rational function of the $z_i$.  Equivalently, $H^{[a]}_{g,n}(x_1,\ldots,x_n)$ gives a local expansion of the rational function $F^{[a]}_{g,n}(z_1,\ldots,z_n)$ in the local coordinate $x(z_i)$ around $z_i=0$.  

Furthermore, $x = z \exp(-z^a)$ defines local involutions $\sigma_{\alpha}(z)$ around each zero $\alpha$ of $dx$ and for $2g-2+n>0$, and Corollary~\ref{th:fagnsym} gives that $F^{[a]}_{g,n}(z_1,\ldots,z_n)$ satisfies:
\begin{itemize}
\item $F^{[a]}_{g,n}(z_1,\ldots,z_n)$ has poles only at $\{z_i=\alpha: dx(\alpha)=0\}$;
\item $F^{[a]}_{g,n}(z_1,\ldots,z_i,\ldots,z_n)$+$F^{[a]}_{g,n}(z_1,\ldots,\sigma_{\alpha}(z_i),\ldots,z_n)$ is analytic at $z_i=\alpha$.
\end{itemize}

The recursion \eqref{eq:genrec1} satisfied locally by $H^{[a]}_{g,n}(x_1,\ldots,x_n)$ is satisfied globally by $F^{[a]}_{g,n}(z_1,\ldots,z_n)$.  Recall that $S'=\{2,\ldots,n\}$ and $S=\{1,\ldots,n\}$.  For $2g-2+n>1$,
\begin{equation}\label{eq:genrec2}\begin{split}
\left( 2g-2+n+\frac{1}{a}\sum_{i=1}^n(1-az_i^a)x_i\frac{\partial}{\partial x_i}\right)&F^{[a]}_{g,n}(z_{S}))
=\\
\sum_{i< j}&\frac{\left(\frac{z_j}{1-az_i^a}x_i\frac{\partial}{\partial x_i}-\frac{z_i}{1-az_j^a}x_j\frac{\partial}{\partial x_j}\right)}{z_i-z_j}\Big(F^{[a]}_{g,n-1}(z_{S \setminus \{j\}})+F^{[a]}_{g,n-1}(z_{S \setminus \{i\}})\Big)\\
+\frac{1}{2}\sum_{i=1}^nx(t_1)x(t_2)&\frac{\partial^2}{\partial x(t_1)\partial x(t_2)}F^{[a]}_{g-1,n+1}(t_1,t_2,z_{S \setminus \{i\}})\Big|_{t_1=t_2=z_i}\\
&+ \frac{1}{2}\sum_{i=1}^n \mathop{\sum^{\mathrm{stable}}_{g_1+g_2=g}}_{I\sqcup J=S \setminus \{i\}} x_i\frac{\partial}{\partial x_i} F^{[a]}_{g_1,|I|+1}(z_i,z_I)x_i\frac{\partial }{\partial x_i}F^{[a]}_{g_2,|J|+1}(z_i,z_J)
\end{split}
\end{equation}

Take the principal part of \eqref{eq:genrec2} at $z_1=\alpha$ and take the invariant part under the involution $\hat{z}_1:=\sigma_i(z_1)$.    

First we have
\[
\big[F^{[a]}_{g,n}(z_1,z_I)+F^{[a]}_{g,n}(\hat{z}_1,z_I)\big]_{\alpha}=0=\Big[x_i\frac{\partial}{\partial x_i}F^{[a]}_{g,n}(z_1,z_I)+x_i\frac{\partial}{\partial x_i}F^{[a]}_{g,n}(\hat{z}_1,z_I)\Big]_{\alpha}=0,\quad i=1,\ldots,n
\]
for any $I\subset S'$ since $F^{[a]}_{g,n}(z_1,z_I)+F^{[a]}_{g,n}(\hat{z}_1,z_I)$ and $x_i\frac{\partial}{\partial x_i}F^{[a]}_{g,n}(z_1,z_I)+x_i\frac{\partial}{\partial x_i}F^{[a]}_{g,n}(\hat{z}_1,z_I)$ are analytic at $z_1=\alpha$.  This annihilates the factor $(2g-2+n)$ in the first line of \eqref{eq:genrec2}, and all summands not involving $z_1$ in all four lines of \eqref{eq:genrec2}. 

The principal part of the terms in \eqref{eq:genrec2} involving $1/(z_1-z_j)$ are calculated as follows.  For any $j\neq 1$, put $\cf_j(z_1)=x_1\frac{\partial}{\partial x_1}F^{[a]}_{g,n-1}(z_1,z_{S' \setminus \{j\}})$.  Then $\cf_j(z_j)=x_j\frac{\partial}{\partial x_j}F^{[a]}_{g,n-1}(z_{S \setminus \{1\}})$ and
\begin{align*}
 \left[\frac{\frac{z_j}{1-az_1^a}\cf_j(z_1)-\frac{z_1}{1-az_j^a}\cf_j(z_j)}{z_1-z_j}\right]_{\alpha}&=
 \left[\frac{\frac{z_1}{1-az_1^a}\cf_j(z_1)-\frac{z_j}{1-az_j^a}\cf_j(z_j)}{z_1-z_j}-\frac{\cf_j(z_1)}{1-az_1^a}-\frac{\cf_j(z_j)}{1-az_j^a}\right]_{\alpha}\\
&=\left[\frac{\frac{z_1}{1-az_1^a}\cf_j(z_1)}{z_1-z_j}\right]_{\alpha}-\left[\frac{\cf_j(z_1)}{1-az_1^a}\right]_{\alpha}\\
&=\left[\frac{\frac{z_1}{1-az_1^a}\cf_j(z_1)}{z_1-z_j}\right]_{\alpha}+c_j
\end{align*}
where we have used \eqref{eq:princ} and the fact that $\frac{\cf_j(z_j)}{1-az_j^a}$ is independent of $z_1$ and hence annihilated by taking principal parts.  Note that $c_j:=-\left[\frac{\cf_j(z_1)}{1-az_1^a}\right]_{\alpha}$ is independent of $z_j$. 
Thus the invariant part of the principal part of \eqref{eq:genrec2} becomes:

\begin{equation}\label{eq:invt}
\Big[\left(\hat{z}_1^a-z_1^a\right)x_1\frac{\partial}{\partial x_1}F^{[a]}_{g,n}(z_1,z_{S'})\Big]_{\alpha}=
\Bigg[\sum_{j=2}^n\frac{\frac{z_1}{1-az_1^a}x_1\frac{\partial }{\partial x_1}F^{[a]}_{g,n-1}(z_1,z_{S' \setminus \{j\}})}{z_1-z_j}
+\frac{\frac{\hat{z}_1}{1-a\hat{z}_1^a}x_1\frac{\partial }{\partial x_1}F^{[a]}_{g,n-1}(\hat{z}_1,z_{S' \setminus \{j\}})}{\hat{z}_1-z_j}\Bigg]_{\alpha}
\end{equation}
\[
+\Bigg[\frac{1}{2}x(t_1)x(t_2)\Bigg(\frac{\partial^2}{\partial x(t_1)\partial x(t_2)}F^{[a]}_{g-1,n+1}(t_1,t_2,z_{S'})\Big|_{t_1=t_2=z_1}+\frac{\partial^2}{\partial x(t_1)\partial x(t_2)}F^{[a]}_{g-1,n+1}(t_1,t_2,z_{S'})\Big|_{t_1=t_2=\hat{z}_1}\Bigg)\Bigg]_{\alpha}
\]
\[
+\frac{1}{2} \mathop{\sum^{\mathrm{stable}}_{g_1+g_2=g}}_{I\sqcup J=S'} \Big[x_1\frac{\partial}{\partial x_1} F^{[a]}_{g_1,|I|+1}(z_1,z_I)x_1\frac{\partial }{\partial x_1}F^{[a]}_{g_2,|J|+1}(z_1,z_J)+x_1\frac{\partial}{\partial x_1} F^{[a]}_{g_1,|I|+1}(\hat{z}_1,z_I)x_1\frac{\partial }{\partial x_1}F^{[a]}_{g_2,|J|+1}(\hat{z}_1,z_J)\Big]_{\alpha}
\]
 \[\hspace{-5cm}+\sum_{j=2}^nc'_j\]
where $c'_j\ \big(:=c_j(z_1)+c_j(\hat{z}_1)\big)$ is independent of $z_j$.  There is now a one-to-one correspondence between terms in \eqref{eq:invt} and terms in the Eynard--Orantin recursion \eqref{eq:EOrec} if we ignore the $c'_j$ terms which will be annihilated later.  We can simplify \eqref{eq:invt} further to:

\begin{equation}\begin{split}\label{eq:invt1}
\Big[\left(\hat{z}_1^a-z_1^a\right)x_1\frac{\partial}{\partial x_1}F^{[a]}_{g,n}(z_1,z_{S'})\Big]_{\alpha}=
\sum_{j=2}^n\Bigg[\frac{\frac{z_1}{1-az_1^a}x_1\frac{\partial }{\partial x_1}F^{[a]}_{g,n-1}(\hat{z}_1,z_{S' \setminus \{j\}})}{z_1-z_j}
+\frac{\frac{\hat{z}_1}{1-a\hat{z}_1^a}x_1\frac{\partial }{\partial x_1}F^{[a]}_{g,n-1}(z_1,z_{S' \setminus \{j\}})}{\hat{z}_1-z_j}\Bigg]_{\alpha}
\\
+\Bigg[x(t_1)x(t_2)\frac{\partial^2}{\partial x(t_1)\partial x(t_2)}F^{[a]}_{g-1,n+1}(t_1,t_2,z_{S'})\Big|_{t_1=z_1,t_2=\hat{z}_1}\Bigg]_{\alpha}\quad\quad\quad\quad
\\
+ \mathop{\sum^{\mathrm{stable}}_{g_1+g_2=g}}_{I\sqcup J=S'} \Big[x_1\frac{\partial}{\partial x_1} F^{[a]}_{g_1,|I|+1}(z_1,z_I)x_1\frac{\partial }{\partial x_1}F^{[a]}_{g_2,|J|+1}(\hat{z}_1,z_J)\Big]_{\alpha}+\sum_{j=2}^nc'_j.
\end{split}
\end{equation}

Define $\cf(z_1)=x_1\frac{\partial}{\partial x_1}F^{[a]}_{g,n-1}(z_1,z_{S \setminus \{j\}})$.  The replacement in the first line of \eqref{eq:invt} to get the first line of \eqref{eq:invt1} uses:
\begin{equation}  \label{eq:02sym}
\Bigg[\frac{\frac{z_1}{1-az_1^a}\cf(z_1)}{z_1-z_j}+\frac{\frac{\hat{z}_1}{1-a\hat{z}_1^a}\cf(\hat{z}_1)}{\hat{z}_1-z_j}+\frac{\frac{z_1}{1-az_1^a}\cf(\hat{z}_1)}{z_1-z_j}+\frac{\frac{\hat{z}_1}{1-a\hat{z}_1^a}\cf(z_1)}{\hat{z}_1-z_j}\Bigg]_{\alpha}
=0
\end{equation}
which is true since \eqref{eq:02sym} factorises into
\begin{align*}
\Bigg[\Big(\cf(z_1)+\cf(\hat{z}_1)\Big)\left(\frac{\frac{z_1}{1-az_1^a}}{\hat{z}_1-z_j}+\frac{\frac{\hat{z}_1}{1-a\hat{z}_1^a}}{z_1-z_j}\right)\Bigg]_{\alpha}&=
\Bigg[\Big(\cf(z_1)+\cf(\hat{z}_1)\Big)\left(\frac{dz_1}{\hat{z}_1-z_j}+\frac{d\hat{z}_1}{z_1-z_j}\right)\frac{x_1}{dx_1}\Bigg]_{\alpha}
\\
&=\Bigg[\Big(\cf(z_1)+\cf(\hat{z}_1)\Big)\frac{x_1}{x_1-x_j}\Bigg]_{\alpha}\\
&=\Bigg[\frac{x_1(\cf(z_1)+\cf(\hat{z}_1))-x_j(\cf(z_j)+\cf(\hat{z}_j))}{x_1-x_j}\Bigg]_{\alpha}=0
\end{align*}
where we have used $\displaystyle\frac{dx_1}{x_1}=\frac{dz_1(1-az_1^a)}{z_1}=\frac{d\hat{z}_1(1-a\hat{z}_1^a)}{\hat{z}_1}$ in the first line and the second line uses Lemma~\ref{th:02inv} together with the fact that $(\cf(z_1)+\cf(\hat{z}_1)$ is analytic at $z_1=\alpha$.  The final expression does not have a pole at $z_1=z_j$ and vanishes since it is an analytic function.

Define the symmetric function of two variables
\[\cf_3(t_1,t_2)=x(t_1)x(t_2)\frac{\partial^2}{\partial x(t_1)\partial x(t_2)}F^{[a]}_{g-1,n+1}(t_1,t_2,z_S).\]
so $\cf_3(t_1,t_2)+\cf_3(\hat{t}_1,t_2)$ is analytic at $t_1=\alpha$ and $\cf_3(t_1,t_2)+\cf_3(t_1,\hat{t}_2)$ is analytic at $t_2=\alpha$.  Thus $\cf_3(t_1,t_2)+\cf_3(\hat{t}_1,t_2)+\cf_3(t_1,\hat{t}_2)+\cf_3(\hat{t}_1,\hat{t}_2)$ is analytic at $t_1=\alpha$ and $t_2=\alpha$ so
 \[\Big[\cf_3(z_1,z_1)+\cf_3(\hat{z}_1,\hat{z}_1)+\cf_3(z_1,\hat{z}_1)+\cf_3(\hat{z}_1,z_1)\Big]_{\alpha}=0\]
which gives the second line of \eqref{eq:invt1}.  For any choice of $I\subset S$, and $J=S-I$, put 
\[\cf_1(z_1)=x_1\frac{\partial}{\partial x_1} F^{[a]}_{g_1,|I|+1}(z_1,z_I),\ \cf_2(z_1)=x_1\frac{\partial }{\partial x_1}F^{[a]}_{g_2,|J|+1}(\hat{z}_1,z_J)\] 
Then $\cf_1(z_1)+\cf_1(\hat{z}_1)$ and $\cf_2(z_1)+\cf_2(\hat{z}_1)$ are analytic at $z_1=\alpha$ 
and so is their product.  Hence
\[\Big[\cf_1(z_1)\cf_2(z_1)+\cf_1(\hat{z}_1)\cf_2(\hat{z}_1)+\cf_1(z_1)\cf_2(\hat{z}_1)+\cf_1(\hat{z}_1)\cf_2(z_1)\Big]_{\alpha}=0\]
which gives the third line of \eqref{eq:invt1}.\\
\\

Note that 
\[d_{z_2} \cdots d_{z_n}x_1\frac{\partial}{\partial x_1}F^{[a]}_{g,n}(z_1,\ldots,z_n)=\Omega^{[a]}_{g,n}(z_1,\ldots,z_n)\frac{x_1}{dx_1}.\]
so act on \eqref{eq:invt1} by $d_{z_2} \cdots d_{z_n}$.  (The $d_{z_1}$ derivatives are already present.)
\begin{equation}\begin{split}\label{eq:invt2}
\Big[\left(\hat{z}_1^a-z_1^a\right)\frac{x_1}{dx_1}\Omega^{[a]}_{g,n}(z_1,z_{S'})\Big]_{\alpha}=
\sum_{j=2}^n\Bigg[\frac{x_1}{dx_1}\Bigg(\frac{\frac{z_1}{1-az_1^a}\Omega^{[a]}_{g,n-1}(\hat{z}_1,z_{S' \setminus \{j\}})dz_j}{(z_1-z_j)^2}
+\frac{\frac{\hat{z}_1}{1-a\hat{z}_1^a}\Omega^{[a]}_{g,n-1}(z_1,z_{S' \setminus \{j\}})dz_j}{(\hat{z}_1-z_j)^2}\Bigg)\Bigg]_{\alpha}
\\
+\Bigg[\frac{x_1^2}{dx_1^2}\Omega^{[a]}_{g-1,n+1}(z_1,\hat{z}_1,z_{S'})\Bigg]_{\alpha}
+ \mathop{\sum^{\mathrm{stable}}_{g_1+g_2=g}}_{I\sqcup J=S'} \Bigg[\frac{x_1^2}{dx_1^2}\Omega^{[a]}_{g_1,|I|+1}(z_1,z_I)\Omega^{[a]}_{g_2,|J|+1}(\hat{z}_1,z_J)\Bigg]_{\alpha}.
\end{split}
\end{equation}
Note that $\displaystyle d_{z_2} \cdots d_{z_n}\sum_{j=2}^nc'_j=0$ since $d_{z_j}c'_j=0$.

Take 
\[ \Bigg[\frac{dx_1}{x_1}\frac{1}{\left(\hat{z}_1^a-z_1^a\right)}\eqref{eq:invt2}\Bigg]_{\alpha}\]
i.e. multiply the principal parts in \eqref{eq:invt2} by $\frac{dx_1}{x_1}\frac{1}{\left(\hat{z}_1^a-z_1^a\right)}$ which is analytic at $z_1=\alpha$ so can pass inside principal parts by taking principal parts again.
Substitute the identiy
\[ \frac{z_1dz_j}{(1-az_1^a)(z_1-z_j)^2}=\frac{x_1}{dx_1}\frac{dz_1dz_j}{(z_1-z_j)^2}=\frac{x_1}{dx_1}\omega_{0,2}(z_1,z_j)\]
to get for $2g-2+n>1$,
\begin{align} \label{eq:invt3}
\Big[\Omega^{[a]}_{g,n}(z_1,z_{S'})\Big]_{\alpha}=
\sum_{j=2}^n\Big[\frac{1}{ \hat{z}_1^a-z_1^a }\frac{x_1}{dx_1}\left(\Omega^{[a]}_{g,n-1}(\hat{z}_1,z_{S' \setminus \{j\}})\omega_{0,2}(z_1,z_j)+
\Omega^{[a]}_{g,n-1}(z_1,z_{S' \setminus \{j\}})\omega_{0,2}(\hat{z}_1,z_j)\right)\Big]_{\alpha}
\\ \nonumber
+\Bigg[\frac{1}{ \hat{z}_1^a-z_1^a }\frac{x_1}{dx_1}\Omega^{[a]}_{g-1,n+1}(z_1,\hat{z}_1,z_{S'})\Bigg]_{\alpha} + \mathop{\sum^{\mathrm{stable}}_{g_1+g_2=g}}_{I\sqcup J=S'} \Bigg[\frac{1}{ \hat{z}_1^a-z_1^a }\frac{x_1}{dx_1}\Omega^{[a]}_{g_1,|I|+1}(z_1,z_I)\Omega^{[a]}_{g_2,|J|+1}(\hat{z}_1,z_J)\Bigg]_{\alpha}.
\end{align}
Now (\ref{eq:invt3})  agrees with the Eynard--Orantin recursion expressed in terms of its principal parts in (\ref{eq:EOprinc}) for the kernel
\[ K(z_1,z)=\frac{z}{2(\hat{z}^a-z^a)(1-az^a)}\left(  \frac{1}{z-z_1}- \frac{1}{\hat{z}-z_1}\right)\frac{dz_1}{dz}.\]
Hence $\Omega^{[a]}_{g,n}(z)$ satisfies the Eynard--Orantin recursion \eqref{eq:EOrec} as required.  
 
To complete the proof we need to show the base cases $(g,n)=(0,3)$ and $(1,1)$ agree since until now we have used Proposition~\ref{th:PDE} which requires $2g-2+n>1$.  

For  $\Omega^{[a]}_{0,3}(z_1,z_2,z_3)$ we use Proposition~\ref{th:PDE1}.  We consider only the principal parts of $\Omega^{[a]}_{0,3}(z_1,z_2,z_3)$ since it is rational and hence determined by its  principal parts.  It is clear from (\ref{eq:PDE03}) that $F^{[a]}_{0,3}$ can only have simple poles hence has principal part
\[ \Big[F^{[a]}_{0,3}\Big]_{\alpha}=\frac{\lambda}{(z_1-\alpha)(z_2-\alpha)(z_3-\alpha)}\]
for some $\lambda$ which is easily calculated using (\ref{eq:PDE03}) to be $\lambda=-\alpha^3/a$.  Then $\Omega^{[a]}_{0,3}(z_1,z_2,z_3)=d_{z_1}d_{z_2}d_{z_3}F^{[a]}_{0,3}$ agrees with the Eynard-Orantin invariant which can be given via the direct formula Theorem 4.1 in \cite{EOrInv}.
\[\Big[\omega^0_3(z_1,z_2,z_3)\Big]_{\alpha}=\res_{z=\alpha}\frac{\omega^0_2(z,z_1)\omega^0_2(z,z_2)\omega^0_2(z,z_3)x(z)}{dx(z)dy(z)}=d_{z_1}d_{z_2}d_{z_3}\frac{1}{(z_1-\alpha)(z_2-\alpha)(z_3-\alpha)(\ln{x})''(\alpha)y'(\alpha)}\]
since $-\alpha^3/a=1/(\ln{x})''(\alpha)y'(\alpha)$.
 
For  $\Omega^{[a]}_{1,1}(z_1)$ we take the invariant part of the principal part of \eqref{eq:PDE11}.  
\begin{equation}\label{eq:invt11}
\Big[\left(\hat{z}_1^a-z_1^a\right)x_1\frac{d}{d x_1}F^{[a]}_{1,1}(z_1)\Big]_{\alpha}=
\Bigg[\frac{x_1^2}{dx_1^2}\frac{dz_1~d\hat{z}_1}{(z_1-\hat{z}_1)^2}\Bigg]_{\alpha}
\end{equation}
where we have used Lemma~\ref{th:02inv} to replace the invariant part of the right hand side of \eqref{eq:PDE11}.  Hence
\[ \Bigg[\Omega_{1,1}^{[a]}(z_1)\Bigg]_{\alpha}=\Big[\frac{1}{ \hat{z}_1^a-z_1^a }\frac{x_1}{dx_1}\omega_{0,2}(z_1,\hat{z}_1)\Bigg]_{\alpha}\]
which agrees with the Eynard--Orantin recursion expressed in terms of its principal parts so $\Omega^{[a]}_{1,1}(z_1)$ satisfies the Eynard--Orantin recursion \eqref{eq:EOrec} as required.  Alternatively, we can use \cite{JPT} where all 1-point functions on the right hand side of the ELSV-type formula in Theorem~\ref{th:JPT} have been calculated.  This yields
\[ F^{[a]}_{1,1}(z_1)=\frac{a}{24}\xi_{1}^{(a)}(z)-\frac{1}{24}\xi_{0}^{(a)}(z)\]
so $\Omega^{[a]}_{1,1}(z_1)=dF^{[a]}_{1,1}(z_1)$ agrees with $\omega^1_1(z_1)$ by a direct calculation of (\ref{eq:EOrec}).
 
Hence the base cases agree and $\Omega^{[a]}_{g,n}(z_1,...,z_n)=\omega^g_n(z_1,...,z_n)$ as required. 
\end{proof}

\section{String and dilaton equations} \label{sec:apps}

The general Eynard--Orantin theory of topological recursion includes string and dilaton equations, which relate $\omega_{g,n+1}$ and $\omega_{g,n}$.

\begin{theorem}[String and dilaton equations] The Eynard--Orantin invariants satisfy the following, where the summations are over the zeros of $dx$ on the spectral curve and $\Phi$ satisfies $d\Phi = y\frac{dx}{x}$.

\begin{align} \label{eq:string1}
\sum_\alpha \mathop{\mathrm{Res}~}_{z=\alpha} y(z) \, \omega_{g,n+1}(z, z_S) &= - \sum_{k=1}^n dz_k \frac{\partial}{\partial z_k} \left[ \omega_{g,n}(z_S) \frac{x_k}{dx_k} \right] \\
\sum_{\alpha} \mathop{\mathrm{Res}~}_{z=\alpha} \Phi(z) \, \omega_{g,n+1}(z_S, z) &= (2-2g-n) \, \omega_{g,n}(z_S).
\end{align}
\end{theorem}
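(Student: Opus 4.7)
The dilaton equation has already been stated as a general property of the Eynard--Orantin invariants in Section~\ref{sec:EO} (following~\cite{EOrInv}), so no further work is needed for that part of the theorem. The new content is the string equation, which I would prove by applying the residue theorem on $\mathbb{P}^1_z$.

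First, I would observe that by Theorem~\ref{th:main}, for $2g-2+(n+1)>0$ the invariant $\omega_{g,n+1}(z,z_S)$ coincides with $\Omega^{[a]}_{g,n+1}$, which is a rational 1-form in $z$ whose poles lie only at the zeros $\alpha$ of $dx$. Since $y(z)=z^a$ is holomorphic away from $z=\infty$, the 1-form (in $z$) $y(z)\,\omega_{g,n+1}(z,z_S)$ has poles confined to $\{\alpha\}\cup\{\infty\}$. The residue theorem then gives
\[
\sum_\alpha \res_{z=\alpha} y(z)\,\omega_{g,n+1}(z,z_S)\;=\; -\res_{z=\infty} y(z)\,\omega_{g,n+1}(z,z_S),
\]
and the problem reduces to identifying the residue at $\infty$ with the stated right-hand side.

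To get a handle on the residue at infinity, I would pass to the local coordinate $w=1/z$ and isolate the coefficient of $dw/w$ in $y\,\omega_{g,n+1}$. The polynomial-in-$z$ component of $\omega_{g,n+1}$, which is what contributes to this residue, is visible from the expansion~(\ref{orb_gen_func_xis}) of $F^{[a]}_{g,n+1}$ in the basis $\{\xi_k^{(r)}\}$ of Lemma~\ref{th:xinax}; each $\xi_k^{(r)}$ grows polynomially at $z=\infty$ with a degree that decreases by $a$ under each application of $x\frac{d}{dx}$, so only finitely many basis elements contribute.

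The main obstacle is matching this residue at infinity with the explicit expression $-\sum_k dz_k\,\partial_{z_k}\!\bigl[\omega_{g,n}(z_S)\,x_k/dx_k\bigr]$ on the right. My plan is to adapt the strategy of Section~\ref{sec:proof}: substitute the principal-part formula~(\ref{eq:EOprinc}) for $\omega_{g,n+1}$ into the LHS, and use the identity $y(\hat z)-y(z)=\hat z^a-z^a$ appearing in the kernel denominator to cancel against the multiplication by $y(z)$ after symmetrizing under the local involution $z\mapsto\hat z$ at each $\alpha$. The remaining $\omega_{g,n}$-type factors should then collapse to the total derivative on the right, with the precise combinatorics supplied by Lemma~\ref{th:02inv} together with the residue identity~(\ref{eq:princ02}). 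An alternative and perhaps cleaner route is combinatorial: on the Hurwitz side, the LHS corresponds to inserting an extra marked point of trivial profile, and the effect of such an insertion on the generating function in Section~\ref{sec:hur} is exactly a differentiation in one of the $z_k$, which is the shape of the claimed RHS.
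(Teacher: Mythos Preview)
The paper does not prove this theorem at all: immediately after the statement it writes ``These are modified versions of equation (A.26) and Theorem~4.7 from~\cite{EOrInv}. The adjustment is due to our use of the exponentiated form of $x$, which effectively requires us to use $\frac{dx}{x}$ in place of $dx$.'' In other words, \emph{both} the string and the dilaton equation are quoted as general structural facts about Eynard--Orantin invariants, valid for any spectral curve, not just the one in this paper. Your observation that the dilaton equation needs no further work is correct, but the same reasoning applies verbatim to the string equation; there is no ``new content'' here to prove.

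Beyond that, your proposed argument has a scope mismatch. You invoke Theorem~\ref{th:main} to identify $\omega_{g,n+1}$ with $\Omega^{[a]}_{g,n+1}$ and then appeal to Hurwitz-number-specific facts (the expansion~(\ref{orb_gen_func_xis}), the cut-and-join structure, etc.). At best this would establish the string equation for the particular curve $x=z\exp(-z^a)$, $y=z^a$, whereas the theorem as stated is a general identity for Eynard--Orantin invariants and is later \emph{applied} (in the proofs of Theorems~\ref{th:string} and~\ref{th:dilaton}) by combining it with Theorem~\ref{th:main}. Your route would collapse this two-step logic into something close to circular, and would leave the theorem unproved in the generality claimed. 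Moreover, the part you flag as the ``main obstacle'' --- matching the residue at infinity with $-\sum_k dz_k\,\partial_{z_k}[\omega_{g,n}\,x_k/dx_k]$ --- is where all the actual work lies, and what you have written is a plan (symmetrize under $z\mapsto\hat z$, hope the kernel denominator cancels, invoke Lemma~\ref{th:02inv}) rather than an argument; the alternative ``combinatorial route'' is even more schematic. If you did want to give a self-contained proof, the cleaner path is the general one in~\cite{EOrInv}: work directly with the recursion~(\ref{eq:EOrec}), integrate by parts against $y$ using $d\Phi=y\,dx/x$, and induct on $2g-2+n$, without ever specializing the spectral curve.
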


These are modified versions of equation (A.26) and Theorem~4.7 from~\cite{EOrInv}. The adjustment is due to our use of the exponentiated form of $x$, which effectively requires us to use $\frac{dx}{x}$ in place of $dx$.


A consequence of Theorem~\ref{th:JPT} is that orbifold Hurwitz numbers can be expressed as
\[
H^{[a]}_g(\mu_1, \ldots, \mu_n) = a^{1-g+\sum \{ \mu_i/a \}} Q^{[a]}_{g,n}(\mu_1, \ldots, \mu_n) \prod_{i=1}^n C(\mu_i),
\]
where $C(\mu) = \frac{\mu^{\lfloor \mu/a \rfloor}}{\lfloor \mu/a \rfloor !}$ and $Q^{[a]}_{g,n}$ is a quasi-polynomial modulo $a$. The string and dilaton equations for orbifold Hurwitz numbers provide a relation between the quasi-polynomials $Q^{[a]}_{g,n+1}$ and $Q^{[a]}_{g,n}$. The above equation defines the values of $Q^{[a]}_{g,n}(\mu_1, \mu_2, \ldots, \mu_n)$ for positive integers $\mu_1, \mu_2, \ldots, \mu_n$. Since $Q^{[a]}_{g,n}$ is a quasi-polynomial, we can naturally extend its domain to all integers $\mu_1, \mu_2, \ldots, \mu_n$. In particular, it makes sense to evaluate these quasi-polynomials at zero.

\begin{theorem}[String equation for orbifold Hurwitz numbers] \label{th:string}
\[
Q^{[a]}_{g,n+1}(\mu_1, \ldots, \mu_n, 0) = (\mu_1 + \cdots + \mu_n) \, Q^{[a]}_{g,n}(\mu_1, \ldots, \mu_n)
\]
\end{theorem}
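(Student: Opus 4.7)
The plan is to derive this identity from the Eynard--Orantin topological recursion string equation~(\ref{eq:string1}), which by Theorem~\ref{th:main} applies to $\omega_{g,n+1} = \Omega^{[a]}_{g,n+1}$, by extracting the coefficient of $\prod_i x_i^{\mu_i-1} dx_i$ on both sides.

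For the right-hand side, I would substitute $\omega_{g,n} = \partial_{x_1}\cdots\partial_{x_n} H^{[a]}_{g,n} \prod_i dx_i$ into $dz_k \frac{\partial}{\partial z_k}[\omega_{g,n}(z_S) x_k/dx_k]$, which simplifies to $(1 + x_k \partial_{x_k})(\partial_{x_1}\cdots\partial_{x_n} H^{[a]}_{g,n}) \prod_i dx_i$. Summing over $k$ and observing that $x_k \partial_{x_k}$ multiplies each $x^{\mu-1}$ coefficient by $\mu_k - 1$, the coefficient of $\prod_i x_i^{\mu_i-1} dx_i$ becomes (up to sign) $(\mu_1 + \cdots + \mu_n)(\prod_i \mu_i)\, H^{[a]}_g(\mu)$. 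Substituting the identity $H^{[a]}_g(\mu) = a^{1-g+\sum\{\mu_i/a\}} Q^{[a]}_{g,n}(\mu) \prod_i C(\mu_i)$ from Section~\ref{sec:genfun} extracts the desired factor $(\sum_j \mu_j)\, Q^{[a]}_{g,n}(\mu)$.

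For the left-hand side, I would exploit the rationality of $\omega_{g,n+1}$ in $z_0$ (Corollary~\ref{th:fagnsym}) together with the fact that its only poles are at the zeros of $dx$: since $y(z_0) = z_0^a$ has no other finite poles, the global residue theorem on $\mathbb{P}^1$ reduces the sum to $-\mathrm{Res}_{z_0 = \infty} y(z_0)\, \omega_{g,n+1}$. I would then expand $\omega_{g,n+1}$ in the basis $\prod d\xi^{(\beta_i)}_{j_i}(z_i)$ from Section~\ref{sec:genfun}, and compute the scalar residues $R^{(r)}_k := \sum_\alpha \mathrm{Res}_{z=\alpha} y(z)\, d\xi^{(r)}_k(z)$ individually. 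Since $x\frac{d}{dx}$ preserves exponents modulo $a$ in the Laurent expansion at infinity, the expansion of $\xi^{(r)}_{k+1}(z)$ at $z = \infty$ contains only powers $z^m$ with $m \equiv r \pmod a$; consequently $R^{(r)}_k$ vanishes unless $r = a$, and a direct expansion of $\xi^{(a)}_{k+1}(z)$ shows that $R^{(a)}_k$ is nonzero only for $k = 0$, where it equals an explicit constant. Thus the residue sum isolates precisely those terms in the generating-function expansion of $\omega_{g,n+1}$ with $\beta_0 = a$ (trivial monodromy at the distinguished marked point) and $j_0 = 0$ ($\bar\tau_0$ insertion). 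By the ELSV-type formula of Theorem~\ref{th:JPT}, the coefficient of $\prod_i x_i^{\mu_i-1}$ in these surviving terms is precisely $Q^{[a]}_{g,n+1}(\mu, 0)$, since setting $\mu_{n+1} = 0$ in the integrand $1/(1 - \mu_{n+1}\bar\psi_{n+1})$ keeps exactly the $\bar\psi_{n+1}^0 = 1$ contribution.

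Matching coefficients on both sides and cancelling the common factor $a^{1-g+\sum\{\mu_i/a\}} \prod_i (\mu_i C(\mu_i))$ yields the stated identity. The main technical hurdle is the residue computation at $z_0 = \infty$: one must carefully track the Laurent expansion of $\xi^{(a)}_1$ at infinity and the relation (Lemma~\ref{exp_gen_lemma}) between $\xi^{(r)}_k$ and $f_{r,k}$, particularly for $r = a$, in order to verify that the explicit constant $R^{(a)}_0$ cancels the factor of $a$ arising from $|\beta|/a = 1 + |\beta_S|/a$ in the generating-function exponent.
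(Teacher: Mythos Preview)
Your proposal is correct and follows essentially the same route as the paper: both sides of the Eynard--Orantin string equation are expanded in the $\xi_k^{(r)}$ basis, the residue $\sum_\alpha \mathrm{Res}_{z=\alpha} y(z)\,\xi_{k}^{(r)}(z)\frac{dx}{x}$ is shown (via the residue at infinity) to be nonzero only for $r=a$ and $k=1$, and coefficients of $\prod x_i^{\mu_i}\frac{dx_i}{x_i}$ are matched. Your worry about an extra factor of $a$ is unfounded: in the paper's expansion the exponent is $\sum\{r_i/a\}$, and $\{a/a\}=0$, so the surviving term carries no extra power of $a$ and the residue evaluates to exactly $1$.
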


\begin{theorem}[Dilaton equation for orbifold Hurwitz numbers] \label{th:dilaton}
\[
\frac{\partial Q^{[a]}_{g,n+1}}{\partial \mu_{n+1}} (\mu_1, \ldots, \mu_n, 0) = (2-2g-n) \, Q^{[a]}_{g,n}(\mu_1, \ldots, \mu_n)
\]
\end{theorem}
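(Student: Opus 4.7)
By Theorem~\ref{th:main}, $\omega_{g,n+1} = \Omega^{[a]}_{g,n+1}$, so my plan is to apply the Eynard--Orantin dilaton equation from the preceding theorem,
\[
\sum_\alpha \res_{z=\alpha} \Phi(z)\, \omega_{g,n+1}(z_S, z) = (2-2g-n)\, \omega_{g,n}(z_S),
\]
where $\Phi(z) = z^a/a - z^{2a}/2$ satisfies $d\Phi = y\,dx/x$ and $\alpha$ ranges over the $a$ zeros of $dx$. I will then translate this identity into the claimed statement about quasi-polynomials via the ELSV formula of Theorem~\ref{th:JPT}.

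First I would decompose $\Omega^{[a]}_{g,n+1}(z_S, z)$ in the variable $z = z_{n+1}$ using the rational basis $\{d\xi_k^{(r)}(z)\}$ of $\ca_x$, via equation~(\ref{orb_gen_func_xis}), whose coefficients are the Hurwitz--Hodge integrals $\langle \Lambda^U \bar\tau_{j_1}\cdots\bar\tau_{j_{n+1}}\rangle^{(\beta)}_{g,n+1}$. The residue sum then reduces to a linear combination of scalar residues
\[
r_{k,r} := \sum_\alpha \res_{z=\alpha} \Phi(z)\, d\xi_k^{(r)}(z).
\]
Using integration by parts together with the vanishing of total residues of exact meromorphic forms on $\mathbb{P}^1$, I obtain $r_{k,r} = -\sum_\alpha \res_{z=\alpha} \xi_k^{(r)}(z)\,z^{a-1}(1-az^a)\,dz$. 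Analysing the Laurent expansion of $\xi_k^{(r)}$ at $z=\infty$ (where for $r=a$ and $k \geq 1$ it decays as $z^{-(k+1)a}$) together with the character sum $\sum_\zeta \zeta^s = a\,\delta_{a\mid s}$ over the $a$-th roots of $1/a$, I would show that $r_{k,r}$ vanishes except for $(k,r) = (1, a)$, with an explicit nonzero value at $(1, a)$.

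The surviving contribution corresponds to monodromy $\beta_{n+1} \equiv 0 \pmod{a}$ (the residue class containing $\mu_{n+1} = 0$) together with a single $\bar\psi_{n+1}$ insertion on $\overline{\mathcal{M}}_{g, [-\beta'] \cup \{0\}}(\mathcal{B}\mathbb{Z}_a)$. The classical dilaton equation along the forgetful morphism $\pi\colon \overline{\mathcal{M}}_{g,[-\beta']\cup\{0\}}(\mathcal{B}\mathbb{Z}_a) \to \overline{\mathcal{M}}_{g, [-\beta']}(\mathcal{B}\mathbb{Z}_a)$ factors this Hurwitz--Hodge integral as $(2g-2+n)$ times the analogous integral without the last marked point. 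Combined with $r_{1,a}$ and the weight $a^{\beta_{n+1}/a}$, the overall numerical factor reproduces $(2-2g-n)$, matching the Eynard--Orantin right-hand side. Invoking Theorem~\ref{th:JPT} to identify the $\bar\psi_{n+1}$-linear coefficient with $\partial_{\mu_{n+1}} Q^{[a]}_{g,n+1}\big|_{\mu_{n+1}=0}$ then yields the claimed dilaton identity.

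The main obstacle will be the scalar residue computation: confirming $r_{k,r}=0$ for $(k,r)\neq(1,a)$ and the exact value at $(1,a)$. These follow from the Laurent expansion of $\xi_k^{(r)}$ at $\infty$ and cyclic sums over the $a$-th roots of $1/a$, so the remaining steps --- matching coefficients and applying ELSV --- amount to bookkeeping parallel to the argument for Theorem~\ref{th:string}.
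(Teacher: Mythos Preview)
Your overall strategy --- apply the Eynard--Orantin dilaton equation, expand $\omega_{g,n+1}$ in the $\xi$-basis, compute the scalar residues $r_{k,r}$, and translate via the ELSV-type formula of Theorem~\ref{th:JPT} --- is exactly the paper's approach. Your residue bookkeeping also lines up: since $d\xi_k^{(r)} = \xi_{k+1}^{(r)}\,dx/x$, your claim that only $(k,r)=(1,a)$ survives is the paper's statement that $\sum_\alpha \res_{z=\alpha} \Phi(z)\,\xi_k^{(r)}(z)\,dx/x$ is nonzero only for $(k,r)=(2,a)$, with value $-1$.

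The one genuine problem is the paragraph invoking ``the classical dilaton equation along the forgetful morphism $\pi\colon \overline{\mathcal{M}}_{g,[-\beta']\cup\{0\}}(\mathcal{B}\mathbb{Z}_a) \to \overline{\mathcal{M}}_{g,[-\beta']}(\mathcal{B}\mathbb{Z}_a)$.'' Via Theorem~\ref{th:JPT}, the identity $\partial_{\mu_{n+1}} Q^{[a]}_{g,n+1}\big|_{\mu_{n+1}=0} = (2-2g-n)\,Q^{[a]}_{g,n}$ is \emph{precisely} the statement that the $\bar\psi_{n+1}$-integral over $\overline{\mathcal{M}}_{g,[-\beta']\cup\{0\}}(\mathcal{B}\mathbb{Z}_a)$ equals $(2g-2+n)$ times the corresponding integral over $\overline{\mathcal{M}}_{g,[-\beta']}(\mathcal{B}\mathbb{Z}_a)$. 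So invoking that classical dilaton relation as an input is circular: you would be assuming what you are asked to prove. The Eynard--Orantin dilaton equation already supplies the equality of left and right sides; there is nothing further to verify on the moduli-space side.

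Concretely, the cleaned-up argument (and the paper's) is: the residue calculation turns the left-hand side of the Eynard--Orantin dilaton equation into the generating series whose coefficients are $\partial_{\mu_{n+1}} Q^{[a]}_{g,n+1}(\mu_1,\ldots,\mu_n,0)$, while the right-hand side is $(2-2g-n)$ times the generating series for $Q^{[a]}_{g,n}(\mu_1,\ldots,\mu_n)$. Comparing coefficients of $\prod_i x_i^{\mu_i}\,dx_i/x_i$ gives the theorem directly. Simply delete the classical-dilaton step and your proof is correct and matches the paper's.
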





The quasi-polynomial behaviour of $Q^{[a]}_{g,n}$ allows us to express the Eynard--Orantin invariants in the following way. The constants $A_{k_1, \ldots, k_n}^{r_1, \ldots, r_n}$ are the coefficients of the polynomials governing $Q^{[a]}_{g,n}$ for particular modulo classes, which are Hurwitz--Hodge integrals by Theorem~\ref{th:JPT}.
\begin{align*}
\omega_{g,n} &= \sum_{\mu_1, \ldots, \mu_n = 1}^\infty a^{1-g+\sum \{ \mu_i/a \}} Q^{[a]}_{g,n}(\mu_1, \ldots, \mu_n) \prod_{i=1}^n C(\mu_i) \mu_i x_i^{\mu_i} \frac{dx_i}{x_i} \\
&= \sum_{r_1, \ldots, r_n = 1}^a a^{1-g+\sum \{ r_i/a \}} \sum_{\mu_1 \equiv r_1, \ldots, \mu_n \equiv r_n} \sum_{k_1, \ldots, k_n = 0}^\text{finite} A_{k_1, \ldots, k_n}^{r_1, \ldots, r_n} \prod_{i=1}^n C(\mu_i) \mu_i^{k_i+1} x_i^{\mu_i} \frac{dx_i}{x_i} \\
&= \sum_{r_1, \ldots, r_n = 1}^a a^{1-g+\sum \{ r_i/a \}} \sum_{k_1, \ldots, k_n = 0}^\text{finite} A_{k_1, \ldots, k_n}^{r_1, \ldots, r_n} \prod_{i=1}^n \xi_{k_i+1}^{(r_i)}(x_i) \frac{dx_i}{x_i}
\end{align*}

\begin{proof}[Proof of Theorem~\ref{th:string}]
We begin with the following residue calculation.
\[
\sum_{\alpha} \mathop{\mathrm{Res}~}_{z=\alpha} y(z) \xi_k^{(r)}(z) \frac{dx}{x} = \begin{cases} 1 & \text{for $k=1$ and $r = a$} \\ 0 & \text{otherwise} \end{cases}
\]
One can prove by induction that
\[
\xi_k^{(r)}(z) = \frac{z^r p_k(z^a)}{(1-az^a)^{2k+1}}
\]
for positive integers $k$, where $p_k$ is a polynomial of degree $k$ for $1 \leq r \leq a-1$ and of degree $k-1$ for $r = a$. It follows that the residue
\[
\sum_{\alpha} \mathop{\mathrm{Res}~}_{z=\alpha} y(z) \xi_k^{(r)}(z)  \frac{dx}{x} = - \mathop{\mathrm{Res}~}_{z=\infty} z^a \frac{1-az^a}{z} \xi_k^{(r)}(z) ~dz
\]
can be non-zero only when $k = 1$ and $r = a$. In this case, the residue can be computed explicitly.
\[
- \mathop{\mathrm{Res}~}_{z=\infty} z^a \frac{1-az^a}{z} \xi_1^{(a)}(z)~dz = - \mathop{\mathrm{Res}~}_{z=\infty} \frac{a^2z^{2a-1}}{(1-az^a)^2}~dz = \mathop{\mathrm{Res}~}_{z=0} \frac{a^2}{(z^a-a)^2}\frac{1}{z}~dz = 1
\]

Consider the left hand side of \eqref{eq:string1}.
\begin{align*}
& \sum_{\alpha} \mathop{\mathrm{Res}~}_{z=\alpha} y(z) \omega_{g,n+1}(z, z_S) \\
=& \sum_{r, r_1, \ldots, r_n = 1}^a a^{1-g+\{ r/a \} + \sum \{ r_i/a \}} \sum_{k, k_1, \ldots, k_n = 0}^\text{finite} A_{k_1, \ldots, k_n, k}^{r_1, \ldots, r_n, r} \prod_{i=1}^n \xi_{k_i+1}^{(r_i)}(x_i) \frac{dx_i}{x_i} \sum_\alpha \mathop{\mathrm{Res}~}_{z=\alpha} y(z) \xi_{k+1}^{(r)} \frac{dx}{x} \\
=& \sum_{r_1, \ldots, r_n = 1}^a a^{1-g+ \sum \{ r_i/a \}} \sum_{k_1, \ldots, k_n = 0}^\text{finite} A_{k_1, \ldots, k_n, 0}^{r_1, \ldots, r_n, a} \prod_{i=1}^n \xi_{k_i+1}^{(r_i)}(x_i) \frac{dx_i}{x_i} \\
=& \sum_{\mu_1, \ldots, \mu_n = 1}^\infty a^{1-g+\sum \{ r_i/a \}} Q^{[a]}_{g,n+1}(\mu_1, \ldots, \mu_n, 0) \prod_{i=1}^n C(\mu_i) \mu_i x_i^{\mu_i} \frac{dx_i}{x_i}
\end{align*}

For the right hand side of \eqref{eq:string1}, we use the fact that $dz_k \frac{\partial}{\partial z_k} x_k^{\mu_k} = \mu_k x_k^{\mu_k} \frac{dx_k}{x_k}$.
\begin{align*}
\sum_{k=1}^n dz_k \frac{\partial}{\partial z_k} \left[ \omega_{g,n}(z_S) \frac{x_k}{dx_k} \right] &= \sum_{k=1}^n dz_k \frac{\partial}{\partial z_k} \left[ \frac{x_k}{dx_k} \sum_{\mu_1, \ldots, \mu_n = 1}^\infty H_g^{[a]}(\mu_1, \ldots, \mu_n) \prod_{i=1}^n \mu_i x_i^{\mu_i} \frac{dx_i}{x_i} \right] \\
&= \sum_{k=1}^n \sum_{\mu_1, \ldots, \mu_n = 1}^\infty \mu_k H_g^{[a]}(\mu_1, \ldots, \mu_n) \prod_{i=1}^n \mu_i x_i^{\mu_i} \frac{dx_i}{x_i} \\
&= \sum_{\mu_1, \ldots, \mu_n = 1}^\infty (\mu_1 + \cdots + \mu_n) a^{1-g+\sum \{ \mu_i/a \}} Q_{g,n}^{[a]}(\mu_1, \ldots, \mu_n) \prod_{i=1}^n C(\mu_i) \mu_i x_i^{\mu_i} \frac{dx_i}{x_i}
\end{align*}

Now compare coefficients of $\prod x_i^{\mu_i} \frac{dx_i}{x_i}$ for both of these expressions to yield the desired result.
\end{proof}

\begin{proof}[Proof of Theorem~\ref{th:dilaton}]
We begin with the following residue calculation.
\[
\sum_{\alpha} \mathop{\mathrm{Res}~}_{z=\alpha} \Phi(z) \xi_k^{(r)}(z) \frac{dx}{x} = \begin{cases} -1 & \text{for $k=2$ and $r = a$} \\ 0 & \text{otherwise} \end{cases}
\]

The equation $d\Phi = y \frac{dx}{x}$ implies that we may write $\Phi = \frac{1}{a} z^a - \frac{1}{2} z^{2a}$. For brevity, we omit the details of the remainder of the proof, which uses the same strategy as the proof of Theorem~\ref{th:string}.
\end{proof}

Let $\widehat{Q}^{[a]}_{g,n}$ denote the polynomial which governs the quasi-polynomial $Q^{[a]}_{g,n}$ in the case that all entries are divisible by $a$. Although the string and dilaton equations are not recursive by nature, they do allow us to uniquely determine these polynomials for low genus.

\begin{corollary}
In genus 0, we have the closed formula $\widehat{Q}_{0,n}(\mu_1, \ldots, \mu_n) = \frac{1}{a} (\mu_1 + \cdots + \mu_n)^{n-3}$. In genus 1, the polynomial $\widehat{Q}_{1,n+1}$ can be effectively determined from $\widehat{Q}_{1,n}$ by the string and dilaton equations. 
\end{corollary}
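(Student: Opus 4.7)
The plan is to prove both assertions by induction on $n$, using only the ELSV-type formula in Theorem~\ref{th:JPT} to extract a degree bound and then interpolating with the string and dilaton equations. From Theorem~\ref{th:JPT} one extracts the essential polynomial input: $\widehat{Q}^{[a]}_{g,n}$ is a symmetric polynomial of total degree at most $3g-3+n$, the virtual dimension of $\overline{\mathcal{M}}_{g,n}(\mathcal{B}\mathbb{Z}_a)$. The underlying interpolation lemma is that for any symmetric polynomial $P(\mu_1,\ldots,\mu_{n+1})$, the vanishing $P(\mu_1,\ldots,\mu_n,0)=0$ combined with symmetry forces $\prod_{i=1}^{n+1}\mu_i\mid P$; hence the restriction map $P\mapsto P(\mu_1,\ldots,\mu_n,0)$ is injective on symmetric polynomials of degree $<n+1$, and has one-dimensional kernel $\mathbb{C}\cdot\prod_i\mu_i$ on those of degree exactly $n+1$.

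For the genus 0 formula, I would induct on $n\geq 3$. The base case $n=3$ follows directly from Theorem~\ref{th:JPT}, since $\overline{\mathcal{M}}_{0,3}$ is a point and the classifying-stack factor contributes $1/a$, giving $\widehat{Q}_{0,3}=1/a$. Assuming $\widehat{Q}_{0,n}(\mu)=\tfrac{1}{a}(\mu_1+\cdots+\mu_n)^{n-3}$, Theorem~\ref{th:string} yields
\[
\widehat{Q}_{0,n+1}(\mu_1,\ldots,\mu_n,0)=(\mu_1+\cdots+\mu_n)\widehat{Q}_{0,n}(\mu)=\tfrac{1}{a}(\mu_1+\cdots+\mu_n)^{n-2}.
\]
Both $\widehat{Q}_{0,n+1}$ and $\tfrac{1}{a}(\mu_1+\cdots+\mu_{n+1})^{n-2}$ are symmetric of degree $n-2<n+1$, so the interpolation lemma forces them to coincide. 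No dilaton input is needed in this case.

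For the genus 1 statement, $\widehat{Q}_{1,n+1}$ has degree at most $n+1$, so the string equation alone determines it only modulo $\mathbb{C}\cdot\prod_{i=1}^{n+1}\mu_i$. Concretely, choose any symmetric extension $P_{\mathrm{str}}(\mu_1,\ldots,\mu_{n+1})$ of degree $\leq n+1$ of the boundary value $(\mu_1+\cdots+\mu_n)\widehat{Q}_{1,n}(\mu_1,\ldots,\mu_n)$ predicted by Theorem~\ref{th:string} (for instance by promoting $\mu_1+\cdots+\mu_n$ to $\mu_1+\cdots+\mu_{n+1}$ in an explicit expression for $\widehat{Q}_{1,n}$). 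Then $\widehat{Q}_{1,n+1}=P_{\mathrm{str}}+c\prod_i\mu_i$ for some scalar $c$, and Theorem~\ref{th:dilaton} evaluated at $\mu_{n+1}=0$ reads
\[
(\partial_{\mu_{n+1}}P_{\mathrm{str}})(\mu_1,\ldots,\mu_n,0)+c\prod_{i=1}^n\mu_i=-n\,\widehat{Q}_{1,n}(\mu),
\]
from which $c$ is read off uniquely. Any ambiguity in the choice of $P_{\mathrm{str}}$ is harmlessly absorbed into $c$, so the algorithm is well-defined.

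The principal obstacle is isolating the degree bound from Theorem~\ref{th:JPT}: expanding $\prod_i(1-\mu_i\bar{\psi}_i)^{-1}$ as a geometric series and bounding the total $\bar{\psi}$-degree by $\dim\overline{\mathcal{M}}_{g,n}(\mathcal{B}\mathbb{Z}_a)=3g-3+n$. Once this bound is established, the remainder of the argument is clean polynomial interpolation.
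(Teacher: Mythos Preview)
Your proposal is correct and follows essentially the same approach as the paper's proof: extract the degree bound $\deg \widehat{Q}^{[a]}_{g,n}\leq 3g-3+n$ from Theorem~\ref{th:JPT}, use the string equation plus the divisibility lemma (a symmetric polynomial vanishing at $\mu_{n+1}=0$ is divisible by $\prod_i\mu_i$) to get uniqueness in genus~0 and a one-parameter ambiguity in genus~1, then fix that parameter with the dilaton equation. The only cosmetic difference is the base case: the paper anchors the induction at the unstable values $n=1,2$ supplied by the conventions in Theorem~\ref{th:JPT} (where the formula $\tfrac{1}{a}(\sum\mu_i)^{n-3}$ still holds formally), whereas you start at the stable case $n=3$ and compute $\widehat{Q}_{0,3}=1/a$ directly from the fact that $\overline{\mathcal{M}}_{0,(0,0,0)}(\mathcal{B}\mathbb{Z}_a)\cong\mathcal{B}\mathbb{Z}_a$.
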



\begin{proof}
The formula certainly holds for the base cases $n = 1$ and $n = 2$, which correspond to the unstable cases of Theorem~\ref{th:JPT}. Now suppose that the formula is true for some $n \geq 2$. Then the string equation and the inductive hypothesis imply that
\[
\widehat{Q}^{[a]}_{0,n+1}(\mu_1, \ldots, \mu_n, 0) = (\mu_1 + \cdots + \mu_n) \, \widehat{Q}^{[a]}_{g,n}(\mu_1, \ldots, \mu_n) = \frac{1}{a}(\mu_1 + \cdots + \mu_n)^{n-2}.
\]
It follows that
\[
\widehat{Q}^{[a]}_{0,n+1}(\mu_1, \ldots, \mu_n, \mu_{n+1}) =\frac{1}{a}(\mu_1 + \cdots + \mu_n)^{n-2} + \mu_{n+1} F(\mu_1, \ldots, \mu_n, \mu_{n+1}).
\]
Now use the fact that $\widehat{Q}_{0,n+1}$ is symmetric of degree at most $n-2$, a consequence of Theorem~\ref{th:JPT}. Suppose that it is possible to write down another symmetric polynomial of degree at most $n-2$, which has the form
\[
\frac{1}{a}(\mu_1 + \cdots + \mu_n)^{n-2} + \mu_{n+1} G(\mu_1, \ldots, \mu_n, \mu_{n+1}).
\]
Then the difference $\mu_{n+1} [F(\mu_1, \ldots, \mu_n, \mu_{n+1}) - G(\mu_1, \ldots, \mu_n, \mu_{n+1})]$ must also be symmetric of degree at most $n-2$. Symmetry implies that, since it is divisible by $\mu_{n+1}$, it must also be divisible by $\mu_1 \cdots \mu_n$. The degree condition now forces the difference to be equal to zero. In other words, the symmetry and degree condition on $\widehat{Q}^{[a]}_{0,n+1}$ uniquely determine $F$ and it follows by induction that $\widehat{Q}^{[a]}_{0,n}(\mu_1, \ldots, \mu_n) = \frac{1}{a} (\mu_1 + \cdots + \mu_n)^{n-3}$.

Now use the same argument and the fact that $\widehat{Q}^{[a]}_{1,n+1}$ is symmetric of degree at most $n+1$. This allows us to determine $\widehat{Q}^{[a]}f_{1,n+1}(\mu_1, \ldots, \mu_n, \mu_{n+1})$ up to the addition of $c \mu_1 \cdots \mu_n \mu_{n+1}$ for some constant $c$. Now invoke the dilaton equation to determine the value of $c$.
\end{proof}


\appendix

\section{Graphical interpretation of Hurwitz numbers}  \label{sec:graphrep}

Let us introduce some notation for the set of branched covers enumerated by the orbifold Hurwitz numbers.

\begin{definition}
For a positive integer $a$, let $\mathrm{Cov}^{[a]}_g(\mu_1, \mu_2, \ldots, \mu_n)$ be the set of connected genus $g$ branched covers $f: C \to \mathbb{P}^1$ such that
\begin{itemize}
\item the preimages of $\infty$ are labelled $p_1, p_2, \ldots, p_n$ and the  divisor $f^{-1}(\infty)$ is equal to $\mu_1p_1 + \mu_2p_2 + \cdots + \mu_np_n$;
\item the ramification profile over 0 is given by a partition of the form $(a, a, \ldots, a)$; and
\item the only other ramification is simple and occurs over the $m$th roots of unity.
\end{itemize}
\end{definition}

Note that the weighted count of the branched covers in $\mathrm{Cov}^{[a]}_g(\mu_1, \mu_2, \ldots, \mu_n)$ is equal to $H_{g;\mu}^{[a]} \times |\mathrm{Aut}~\mu|$. The extra factor appears since we require the branched covers to have labelled preimages of $\infty$. Thus, it is natural to define the following normalisation of the orbifold Hurwitz numbers.
\[
K_g^{[a]}(\mu) = H_{g;\mu}^{[a]} \times |\mathrm{Aut}~\mu|
\]

In this appendix, we prove Proposition~\ref{th:rec} using an interpretation of Hurwitz numbers as the weighted count of fatgraphs, which appears in the work of Okounkov and Pandharipande~\cite{OPaGrom}. One can informally think of a fatgraph as the 1-skeleton of a finite cell decomposition of a connected oriented surface, where the faces are labelled from 1 up to $n$. It is useful to consider each edge as the union of two half-edges. The orientation of the underlying surface allows us to define the permutation $\sigma_0$ on the set of half-edges that cyclically permutes the half-edges adjacent to a common vertex. The permutation $\sigma_1$ denotes the fixed point free involution that swaps two half-edges comprising the same edge. The product $\sigma_2 = \sigma_0 \sigma_1$ is the permutation that cyclically permutes the half-edges adjacent to a common face. The following precise definition generalises this notion of a fatgraph by allowing $\sigma_1$ to have fixed points, which correspond to half-edges that do not get paired to create an edge. We refer to such half-edges in the fatgraph as {\em leaves}.

\begin{definition}
A {\em fatgraph} is a triple $(X, \sigma_0, \sigma_1)$ where $X$ is a finite set, $\sigma_0: X \to X$ is a permutation, and $\sigma_1: X \to X$ is an involution. We require that the group generated by $\sigma_0$ and $\sigma_1$ acts transitively on $X$ and that the elements of $X / \langle \sigma_2 \rangle$ are labelled from 1 up to $n$.
\end{definition}

The set $X / \langle \sigma_0 \rangle$ is canonically equivalent to the set of vertices of the fatgraph. The set $X / \langle \sigma_1 \rangle $ is canonically equivalent to the set of leaves and edges of the fatgraph. Furthermore, the set $X / \langle \sigma_2 \rangle$ is canonically equivalent to the set of faces of the fatgraph. The {\em perimeter} of a face is defined to be the length of the cycle of $\sigma_2$ corresponding to the face. We consider two fatgraphs $(X, \sigma_0, \sigma_1)$ and $(Y, \tau_0, \tau_1)$ to be equivalent if there exists a bijection $\phi: X \to Y$ satisfying $\phi \circ \sigma_0 = \tau_0 \circ \phi$ and $\phi \circ \sigma_1 = \tau_1 \circ \phi$, which preserves the face labels. Thus, each fatgraph $\Gamma$ is endowed with a natural automorphism group $\mathrm{Aut}~\Gamma$.


The structure of a fatgraph allows one to thicken the underlying graph to a connected oriented surface with boundary, where the boundary components naturally correspond to the faces. In particular, a fatgraph acquires a type $(g,n)$, where $g$ denotes the genus and $n$ the number of faces. The following diagram shows two distinct fatgraphs --- the first  of type $(0,3)$ and the second of type $(1,1)$ --- whose underlying graphs are isomorphic. We use the usual convention whereby the cyclic ordering of the half-edges adjacent to a vertex is induced by the orientation of the page.


\begin{center}
\includegraphics{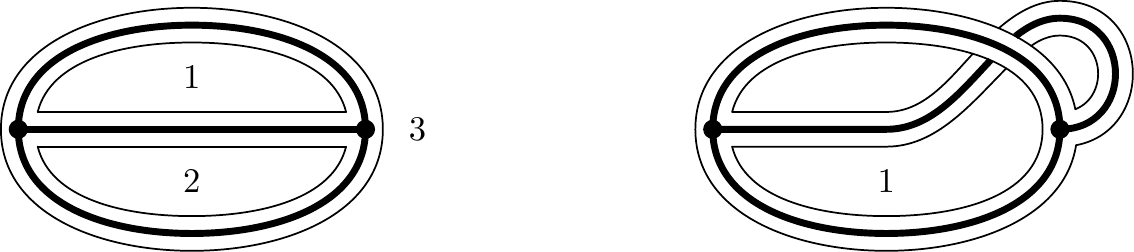}
\end{center}

\begin{definition}
For a positive integer $a$, let $\mathrm{Fat}^{[a]}_g(\mu_1, \mu_2, \ldots, \mu_n)$ be the set of edge-labelled fatgraphs of type $(g,n)$ such that
\begin{itemize}
\item there are $\frac{|\mu|}{a}$ vertices and at each of them there are $am$ adjacent half-edges that are cyclically labelled
\[
1, 2, 3, \ldots, m, 1, 2, 3, \ldots, m, \ldots, 1, 2, 3, \ldots, m;
\]
\item there are exactly $m$ edges that are labelled $1, 2, 3, \ldots, m$; and
\item the perimeters of the faces are given by the tuple $(\mu_1m, \mu_2m, \ldots, \mu_nm)$.
\end{itemize}
\end{definition}

Here, we say that an edge is labelled $k$ if its constituent half-edges are both labelled $k$. For example, the set $\mathrm{Fat}^{[2]}_0(3,1)$ consists of the following three fatgraphs, where the face labels have been omitted for clarity.

\begin{center}
\includegraphics{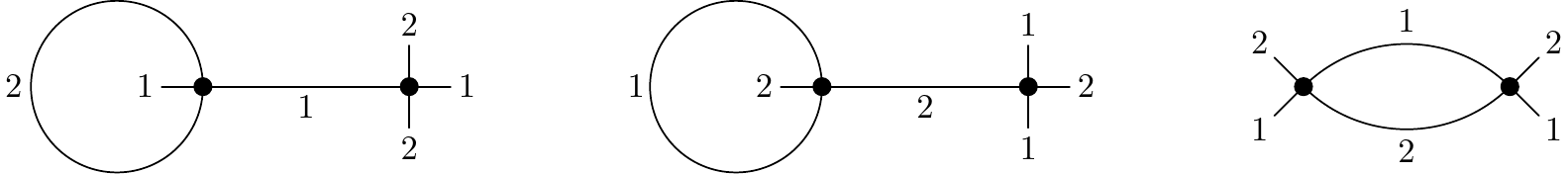}
\end{center}



\begin{proposition}
There is a one-to-one correspondence between $\mathrm{Cov}^{ [a]}_g(\mu_1,\ldots,\mu_n)$ and $\mathrm{Fat}^{ [a]}_g(\mu_1,\ldots,\mu_n)$ that preserves automorphism groups. Consequently, the normalised orbifold Hurwitz number $K_g^{[a]}(\mu)$ is the weighted count of the fatgraphs in $\mathrm{Fat}^{ [a]}_g(\mu_1,\ldots,\mu_n)$.
\end{proposition}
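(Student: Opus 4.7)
The plan is to exhibit an explicit bijection by pulling back a distinguished base graph on $\mathbb{P}^1$, then verifying that the combinatorial structure carried by the pulled-back graph is precisely the data demanded by $\mathrm{Fat}^{[a]}_g(\mu)$. Let $G \subset \mathbb{P}^1$ be the star with centre $0$ and leaves at the $m$th roots of unity $\zeta_1, \ldots, \zeta_m$, viewed as a graph with $m$ edges labelled $1, \ldots, m$ according to their endpoints. Since $G$ is a contractible tree, $\mathbb{P}^1 \setminus G$ is a single open disk containing $\infty$.

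For the forward map, given $f \in \mathrm{Cov}^{[a]}_g(\mu)$, I would take $\tilde G = f^{-1}(G) \subset C$ and read off the fatgraph data as follows. The vertices are the $|\mu|/a$ preimages of $0$; near each such preimage the local model $f \sim z^a$ produces $am$ arcs, cyclically labelled $1, 2, \ldots, m$ (repeated $a$ times) in the orientation inherited from $\mathbb{P}^1$. Above each $\zeta_k$ there is one simple ramification point where two preimage arcs of $[0, \zeta_k]$ meet and combine into a single edge labelled $k$; at each of the remaining $|\mu|-2$ unramified preimages of $\zeta_k$ a single arc terminates, giving a leaf fixed by $\sigma_1$ and labelled $k$. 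The permutation $\sigma_0$ is read from the orientation, while the face labels come from the labelling of preimages of $\infty$ attached to the components of $f^{-1}(\mathbb{P}^1 \setminus G)$.

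The heart of the verification is the perimeter count. The face containing $p_i$ is a topological disk that forms a $\mu_i$-fold branched cover of $\mathbb{P}^1 \setminus G$, branched only at $p_i \mapsto \infty$. Its topological boundary projects with degree $\mu_i$ onto the loop around $\partial(\mathbb{P}^1 \setminus G)$, and that loop traverses each of the $m$ base segments on both sides, for $2m$ arc-sides in total. Hence the face boundary in $C$ traverses $2\mu_i m$ arc-sides in $\tilde G$. Each half-edge of $\tilde G$ contributes exactly two arc-sides to its adjacent face: one side of an edge is formed from one side of each of the two arcs meeting at the ramification point, while for a leaf the two sides of the single arc together with the tip all lie in the same face. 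Dividing by two yields the required perimeter $\mu_i m$. The genus equals $g$ by an Euler-characteristic computation that includes the leaf endpoints among the $0$-cells, recovering exactly the relation $m = 2g-2+n+|\mu|/a$.

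For the inverse direction, given $\Gamma \in \mathrm{Fat}^{[a]}_g(\mu)$ I would thicken to a ribbon surface, cap each $\sigma_2$-cycle with a disk $D_i$ to obtain a closed oriented surface $C$ of genus $g$, and build $f : C \to \mathbb{P}^1$ by sending each vertex to $0$, each arc labelled $k$ onto $[0, \zeta_k]$, edge midpoints and leaf tips onto the corresponding $\zeta_k$, and each $D_i$ onto $\mathbb{P}^1 \setminus G$ as the standard $\mu_i$-fold branched cover of a disk. Pulling back the complex structure along this topological $f$ endows $C$ with the unique holomorphic structure making $f$ a branched cover, and the cyclic label pattern forces local degree $a$ at every vertex. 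The two constructions manifestly reverse each other. A cover automorphism commuting with $f$ must preserve $\tilde G$ with all its labels, hence restricts to a fatgraph automorphism; conversely any fatgraph automorphism lifts uniquely through the ribbon-surface construction to a homeomorphism of $C$ commuting with $f$, and is holomorphic by uniqueness of the pulled-back complex structure. The main obstacle is the factor-of-two bookkeeping in the perimeter count, which must be handled with separate but parallel arguments for edges and for leaves; without matching two arc-sides to each half-edge in both cases the numerics $2\mu_i m$ versus $\mu_i m$ cannot be reconciled.
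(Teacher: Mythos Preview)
Your approach is essentially identical to the paper's: define the base star $\Gamma_m$ with half-edges from $0$ to the $m$th roots of unity and send $f \mapsto f^{-1}(\Gamma_m)$. The paper's proof is much terser, simply asserting that the branched-cover conditions translate into the fatgraph conditions and that cover automorphisms correspond to label-preserving homeomorphisms; your explicit perimeter check, ribbon-surface inverse, and automorphism argument fill in details the paper leaves to the reader.
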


\begin{proof}
Let $\Gamma_m$ denote the fatgraph with one vertex obtained by connecting $0$ to the $m$th roots of unity in $\mathbb{P}^1$ by half-edges, as shown in the diagram below. The one-to-one correspondence between $\mathrm{Cov}^{ [a]}_g(\mu_1,\mu_2,\ldots,\mu_n)$ and $\mathrm{Fat}^{ [a]}_g(\mu_1,\mu_2,\ldots,\mu_n)$ is given by $f \mapsto f^{-1}(\Gamma_m)$.

\begin{center}
\includegraphics{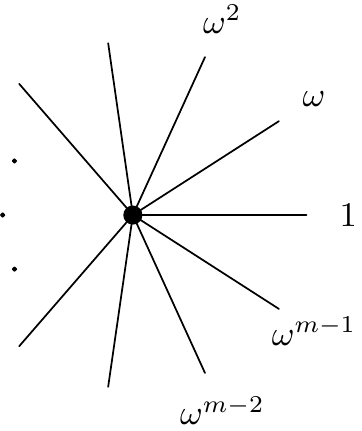}
\end{center}

The vertices correspond to the preimages of 0 and the faces to the preimages of $\infty$. Preimages of the segment connecting 0 to $\omega^k$ correspond to half-edges labelled $k$, so that the edges correspond to the points where branching occurs. The conditions for a branched cover to be in $\mathrm{Cov}^{ [a]}_g(\mu_1,\mu_2,\ldots,\mu_n)$ translate into the conditions for a fatgraph to be in $\mathrm{Fat}^{ [a]}_g(\mu_1,\mu_2,\ldots,\mu_n)$.

An isomorphism of fatgraphs is equivalent to an orientation-preserving homeomorphism of the underlying surface that maps vertices, edges, and faces to vertices, edges, and faces, while preserving all labels. It follows that the notion of equivalence and automorphism of branched covers descends to the notion of equivalence and automorphism of fatgraphs.
\end{proof}

Continuing the previous example, we note that the three fatgraphs in $\mathrm{Fat}_0^{[2]}(3,1)$ have trivial automorphism groups. Therefore, we have calculated the orbifold Hurwitz number $H_{0;(3,1)}^{[2]} = K_0^{[2]}(3,1) = 3$.

The cut-and-join recursion for orbifold Hurwitz numbers can be stated in terms of the normalisation $K_g^{[a]}(\mu)$.
\begin{align*}
K_g^{[a]}(\mu_S) &= \sum_{i < j} (\mu_i + \mu_j) K_g^{[a]}(\mu_{S \setminus \{i,j\}}, \mu_i+\mu_j) \\
&+ \sum_{i=1}^n \sum_{\alpha+\beta=\mu_i} \frac{\alpha \beta}{2} \left[ K_{g-1}^{[a]}(\mu_{S \setminus \{i\}},\alpha,\beta) + \sum_{g_1+g_2=g} \sum_{I \sqcup J = S \setminus \{i\}}\frac{(m-1)!}{m_1! m_2!} K_{g_1}^{[a]}(\mu_I, \alpha) K_{g_2}^{[a]}(\mu_J, \beta) \right],
\end{align*}
Here, we use the notation $m_1 = 2g_1 - 1 + |I| + \frac{|\mu_I|+\alpha}{a}$ and $m_2 = 2g_2 - 1 + |J| + \frac{|\mu_J|+\beta}{a}$. The conditions $g_1+g_2=g$, $I \sqcup J = S \setminus \{i\}$, and $\alpha+\beta = \mu_i$ imply that $m_1+m_2=m-1$.

\begin{proof}[Proof of Proposition~\ref{th:rec}]
Recall that $K_g^{[a]}(\mu_S)$ is the weighted count of the fatgraphs in $\mathrm{Fat}_g^{ [a]}(\mu_S)$. Choose a fatgraph in $\mathrm{Fat}_g^{ [a]}(\mu_S)$ and remove the half-edges and the edge labelled $m$ from it. Then one of the following three cases must arise.

\begin{itemize}
\item {\em The edge labelled $m$ is adjacent to two distinct faces labelled $i$ and $j$.} \\
The removal of the half-edges and the edge labelled $m$ leaves a fatgraph in $\mathrm{Fat}^{[a]}_{g}(\mu_{S \setminus \{i,j\}}, \mu_i+\mu_j)$.

\begin{center}
\includegraphics{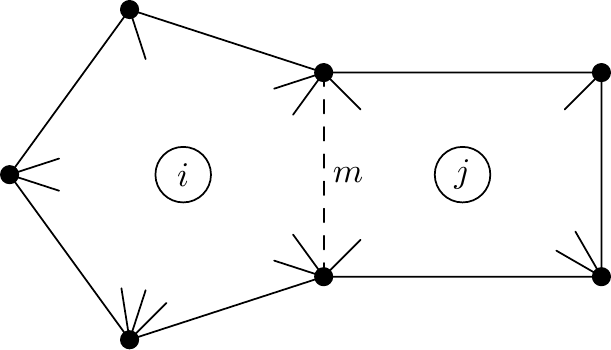}
\end{center}

\noindent Conversely, there are $\mu_i + \mu_j$ ways to reconstruct a fatgraph in $\mathrm{Fat}_g^{ [a]}(\mu_S)$ from a fatgraph in $\mathrm{Fat}^{[a]}_{g}(\mu_{S \setminus \{i,j\}}, \mu_i+\mu_j)$  by adding half-edges and one edge labelled $m$.

\item {\em The edge labelled $m$ is adjacent to the face labelled $i$ on both sides and its removal leaves a fatgraph.} \\
The removal of the half-edges and the edge labelled $m$ leaves a fatgraph in $\mathrm{Fat}^{[a]}_{g-1}(\mu_{S \setminus \{i\}}, \alpha, \beta)$, where $\alpha + \beta = \mu_i$.

\begin{center}
\includegraphics{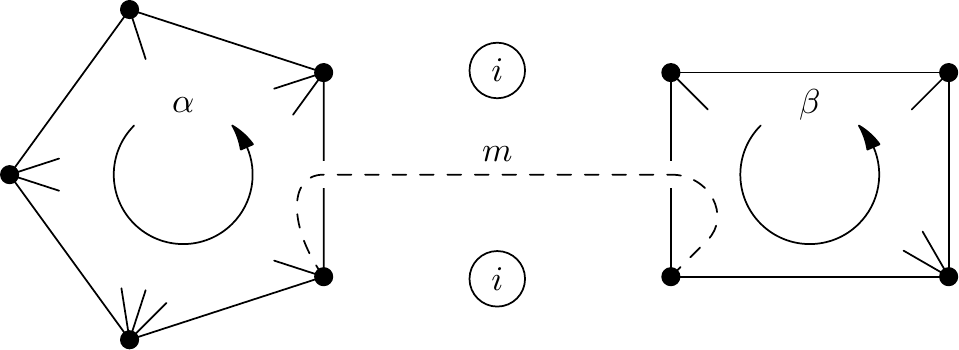}
\end{center}

\noindent Conversely, there are $\frac{1}{2} \times \alpha \times \beta$ ways to reconstruct a fatgraph in $\mathrm{Fat}_g^{ [a]}(\mu_S)$ from a fatgraph in $\mathrm{Fat}^{[a]}_{g-1}(\mu_{S \setminus \{i\}}, \alpha, \beta)$ where $\alpha + \beta = \mu_i$ by adding half-edges and one edge labelled $m$. The factor $\alpha \times \beta$ accounts for the possible locations of the ends of the edge labelled $m$. The factor $\frac{1}{2}$ adjusts for the overcounting due to the symmetry in $\alpha$ and $\beta$.

\item {\em The edge labelled $m$ is adjacent to the face labelled $i$ on both sides and its removal leaves the disjoint union of two fatgraphs.} \\
The removal of the half-edges and the edge labelled $m$ leaves the disjoint union of two fatgraphs $\Gamma_1$ and $\Gamma_2$. Remove from $\Gamma_1$ all leaves whose label does not appear on an edge of $\Gamma_1$ and replace all labels with the numbers from 1 up to $m_1$, preserving the order. Similarly, remove from $\Gamma_2$ all leaves whose label does not appear on an edge of $\Gamma_2$ and replace all labels with the numbers from 1 up to $m_2$, preserving the order. We are left with the disjoint union of two fatgraphs in $\mathrm{Fat}^{[a]}_{g_1}(\mu_I, \alpha)$ and $\mathrm{Fat}^{[a]}_{g_2}(\mu_J, \beta)$. We necessarily have the conditions $g_1+g_2 = g$, $I \sqcup J = S \setminus \{i\}$, and $\alpha + \beta = \mu_i$.

\begin{center}
\includegraphics{cut-and-join-2}
\end{center}

\noindent Conversely, there are $\frac{1}{2} \times \alpha \times \beta \times \frac{(m-1)!}{m_1! m_2!}$ ways to reconstruct a fatgraph in $\mathrm{Fat}_g^{ [a]}(\mu_S)$ from a pair of fatgraphs in $\mathrm{Fat}^{[a]}_{g_1}(\mu_I, \alpha)$ and $\mathrm{Fat}^{[a]}_{g_2}(\mu_J, \beta)$ by adding half-edges and one edge labelled $m$. We have assumed here that $g_1+g_2=g$, $I \sqcup J = S \setminus \{i\}$, and $\alpha + \beta = \mu_i$. The factor $\frac{(m-1)!}{m_1! m_2!}$ accounts for the distribution of the labels $\{1, 2, \ldots, m-1\}$ between the two fatgraphs. The factor $\alpha \times \beta$ accounts for the possible locations of the ends of the edge labelled $m$. The factor $\frac{1}{2}$ adjusts for the overcounting due to the symmetry in $(g_1, I, \alpha)$ and $(g_2, J, \beta)$.
\end{itemize}

To obtain all fatgraphs in $\mathrm{Fat}_g^{ [a]}(\mu)$, it is necessary to perform the reconstruction process in the first case for all possible values of $i$ and $j$; in the second case for all possible values of $i$ and $\alpha + \beta = \mu_i$; and in the third case for all possible values of $i$, $g_1+g_2=g$, $I \sqcup J = S \setminus \{i\}$ and $\alpha + \beta = \mu_i$. We obtain the cut-and-join recursion for orbifold Hurwitz numbers by summing up over all these contributions.
\end{proof}


\section{Combinatorics of exponential generating functions} \label{Comb_appendix}


\begin{definition} \label{cactus-node_def}
Let $d$ be a positive integer and $\nu$ be a partition of $d$. A {\em cactus-node tree of type $\nu$} is a connected graph $D$ such that:
\begin{itemize}
	\item There exists a collection $N$ called the {\em nodes} (or {
	\em cactus-nodes}):
	\[
	N = \left\{ \,\, g_i \,\, \,\left|\,\,  \begin{array}{l} i \in \{1,\ldots,l\} \mbox{, if $i\neq j$ then $g_i \cap g_j =\emptyset$, and}\\ \mbox{$g_i$ is a {\em directed} $\nu_i$-cycle in $D$ if $\nu_i>1$,} \\ \mbox{or a vertex of $D$ if $\nu_i=1$,}  \end{array} \right. \right\}
	\]
	\item There exists a collection of edges $B$ with $|B| = \ell(\nu)-1$ and $B\cap E(N) = \emptyset$ called the {\em branches}.
	\item If $c$ is a cycle in $D$ then $c\in N$.
	\item $|\mathrm{Edges}(D)|$ = $|\mathrm{Edges}(N)|+ |\mathrm{Edges}(B)|$
\end{itemize}
Call a node that is connected to exactly one branch a {\em leaf}.  
 \end{definition}

 \begin{example}
These are examples of cactus-node trees of type $\{1,1,1,3,4,5\}$. 
\begin{center}
\includegraphics[scale=0.7]{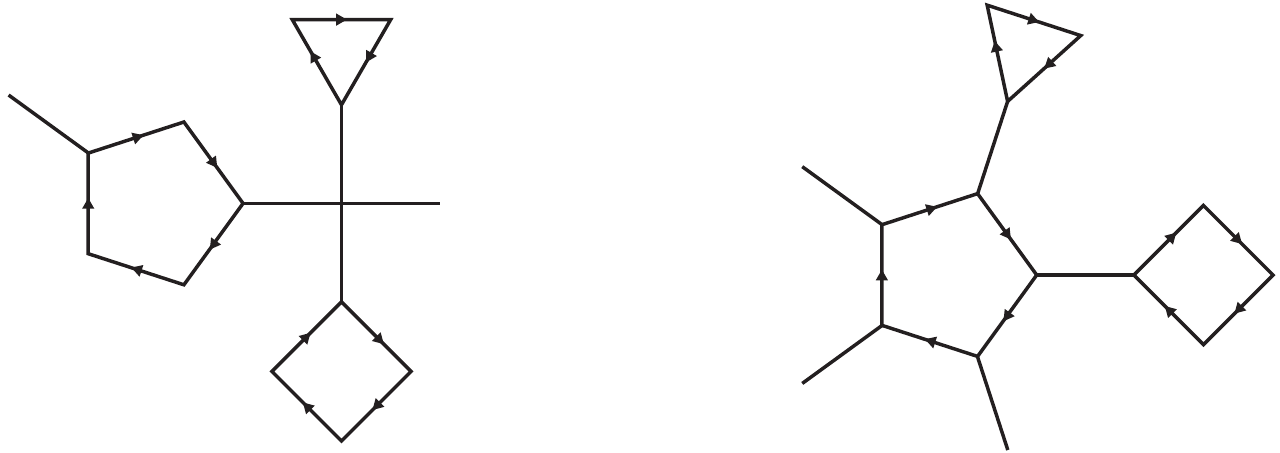}
\end{center}
\end{example}

\begin{proposition} \label{cactus-node_enum}
Let $d$ be a positive integer and $\nu$ be a partition of $d$. The number of cactus-node trees of type $\nu$  on a set of $d$ marked points is
\[
\frac{d!}{|\mathrm{Aut}~\nu|} d^{\ell(\nu)-2}.
\]
\end{proposition}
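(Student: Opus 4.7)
The plan is to decompose the enumeration into three independent combinatorial choices and apply a weighted Cayley-type formula for the last step. Throughout I write $\ell = \ell(\nu)$ and $d = |\nu|$.

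First I would choose an unordered partition of the $d$ marked points into blocks of sizes $\nu_1, \ldots, \nu_\ell$, which will serve as the underlying vertex sets of the nodes; there are
\[
\frac{d!}{\prod_{i=1}^\ell \nu_i! \cdot |\mathrm{Aut}~\nu|}
\]
such partitions. Next I would endow each block of size $\nu_i \geq 2$ with a directed cycle structure, of which there are $(\nu_i - 1)!$; a block of size $1$ carries a unique such structure, consistent with the convention $0!=1$. This contributes a factor $\prod_{i=1}^\ell (\nu_i - 1)!$.

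Finally I would assemble the $\ell$ nodes into a cactus-node tree by adding $\ell - 1$ branches. Each branch joins two distinct nodes and attaches at a specified marked point on each; a node of size $\nu_i$ provides $\nu_i$ candidate attachment sites, which may be reused if several branches are incident at that node. Summing over the underlying tree shape on the node set $[\ell]$, the total number of branch configurations is
\[
\sum_{\text{trees $T$ on $[\ell]$}} \prod_{i=1}^\ell \nu_i^{\deg_T(i)}.
\]
To evaluate this I would invoke the Prüfer bijection: vertex $i$ appears exactly $\deg_T(i) - 1$ times in the Prüfer code of $T$, and every word in $[\ell]^{\ell - 2}$ arises. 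Substituting $x_i = \nu_i$ in the resulting identity $\sum_T \prod x_i^{\deg_T(i) - 1} = (x_1 + \cdots + x_\ell)^{\ell - 2}$ gives
\[
\sum_T \prod_{i=1}^\ell \nu_i^{\deg_T(i)} = \nu_1 \cdots \nu_\ell \cdot d^{\ell - 2}.
\]

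Multiplying the three counts, the factor $\prod \nu_i!$ in the denominator from the first step cancels against $\prod (\nu_i - 1)! \cdot \prod \nu_i$ from the second and third steps, leaving $\frac{d!}{|\mathrm{Aut}~\nu|}\, d^{\ell - 2}$ as claimed. The degenerate case $\ell = 1$ (no branches, so Step 3 is vacuous) must be verified separately: the formula returns $\frac{(d-1)!}{|\mathrm{Aut}~\nu|}$, which is precisely the count of directed $d$-cycles on $d$ points given by Steps 1--2 alone. The main point of care is Step 3, where one must match the multiplicity of attachment choices at each node with the degree weighting in the Cayley sum, and verify that reorderings produced by relabelling the tree vertices $[\ell]$ line up with the given labelling of the nodes.
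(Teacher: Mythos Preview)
Your proof is correct. Both arguments rest on a Pr\"ufer-type encoding, but they organise the count differently: the paper builds a direct bijection between cactus-node trees and $M \times \{1,\ldots,d\}^{\ell-2}$, where $M$ is the set of \emph{rooted} directed cycle-collections (the root on each node records where its last-removed branch was attached), so the factor $d^{\ell-2}$ comes out in one stroke from the Pr\"ufer word on marked points. You instead separate the partition-into-blocks, the cycle structures, and the tree-with-attachment-points, invoking the weighted Cayley identity $\sum_T \prod_i x_i^{\deg_T(i)} = x_1\cdots x_\ell(x_1+\cdots+x_\ell)^{\ell-2}$ as a black box on the node set $[\ell]$. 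Your route is more modular and makes the role of each factor transparent; the paper's is self-contained and slightly slicker in that the roots and the $d^{\ell-2}$ emerge simultaneously from the encoding rather than being assembled from three pieces.
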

\begin{proof}
We generalise the Pr\"ufer encoding used to prove Cayley's formula for the number of labelled trees. Let $M$ be the set of collections of the form:
\[
\left\{ \,\, C_i \,\, \,\left|\,\,  \begin{array}{l} i \in \{1,\ldots,l\} \mbox{, if $i\neq j$ then $C_i \cap C_j =\emptyset$, and}\\ \mbox{$C_i$ is a {\em rooted} $\nu_i$-cycle on $\{1,\ldots,d\}$ if $\nu_i>1$,} \\ \mbox{or a marked point on $\{1,\ldots,d\}$ if $\nu_i=1$,}  \end{array} \right. \right\}
\]
We claim that there is a bijection
\[
 \left\{\begin{array}{c}\mbox{Cactus-node trees}\\ \mbox{of type $\nu$} \end{array}\right\}
\longleftrightarrow  M \times \{1,\ldots,d\}^{l-2}
\]
To see this we use ideas from the Pr\"ufer encoding:
\begin{itemize}
	\item Locate the leaf with with the largest label. 
	\item Mark the leaf where the branch is connected. 
	\item Write down the label the where branch is connected to the non-leaf component.
	\item Remove the branch connecting this to the graph.
	\item Repeat this until two leaves are left. 
	\item Remove the branch connecting the final two leaves.
	\item We are left with a collection in $M$ and a sequence in $\{1,\ldots,\}$ of length $l-2$.
\end{itemize}
This encoding can be reversed. Let $C$ be a collection in $M$, and $K$ be a  sequence in $\{1,\ldots,\}$ of length $l-2$.
\begin{itemize}
	\item Locate $b\in C$ with the largest label not in $K$. 
	\item Locate $c\in C$ that contains the label $K_1$. 
	\item Connect $b$ and $c$ with a branch at the marked points.
	\item Replace $C$ with $C\setminus b$ and $K$ with $(K_2,\ldots, K_{l-2})$.
	\item Continue until $K$ is empty.
	\item Connect the marked points of the remaining two elements of $C$.
\end{itemize}

Each $C\in M$ can be uniquely specified by
 \begin{itemize}
	\item Partitioning $S$ into sets of size determined by $\nu$. The number of ways to do this is:
 \[
 \binom{d}{\nu_1} \binom{d-\nu_1}{\nu_2}\cdots\binom{\nu_l}{\nu_l}=\frac{d!}{\nu_1!(d-\nu_1)!}\frac{(d-\nu_1)!}{\nu_2!(d-\nu_1-\nu_2)!}\cdots 1 = \frac{d!}{\nu_1! \cdots \nu_l!}. 
 \]
	\item Specifying the cycle structure and marked point of each set. For each set of size $\nu_i$ there are $\nu_i!$ possible marked cycle structures. 
	\item We must divide by $|\mathrm{Aut}~\nu|$ because these sets are unlabelled.
\end{itemize}

Hence $|M| = \dfrac{d!}{\mathrm{Aut}(\nu)}$ and the desired result follows immediately.
\end{proof}


\setlength{\parskip}{0pt}
\end{document}